\documentclass[11pt]{article}  % default square logo

\usepackage{amssymb}
\usepackage{bm}
\usepackage{amsfonts}
\usepackage{latexsym}
\usepackage{amsthm}
\usepackage{hyperref}
\usepackage{xcolor} 
\hypersetup{
	colorlinks,
    %linkcolor={red!50!black},
    linkcolor=black,
    citecolor={blue!50!black},
    urlcolor={blue!80!black},
    urlcolor={black}
}
\usepackage{enumerate}
\usepackage{epsfig}
\usepackage{graphicx}
\usepackage{color}
\usepackage{float} 
\usepackage{subfig}
\usepackage{amsmath,bm}
\usepackage{amssymb}
\usepackage{stmaryrd}
\usepackage{mathrsfs}
\usepackage{makeidx}
\usepackage{fancyhdr}
\usepackage{lastpage}
\usepackage{url}
\usepackage{geometry}
\usepackage{bbm}
\usepackage{fullpage}
\usepackage{amsmath}
\usepackage{hyperref}
\usepackage{parskip}
\usepackage{booktabs}
\usepackage{multirow}
\usepackage{comment}
\usepackage{mathtools}
\geometry{left=2.1cm,right=2.1cm,top=3cm,bottom=3cm}
\usepackage{slashbox}
\usepackage{diagbox}

\usepackage{csquotes}

\usepackage{tikz}

\usepackage[titletoc,toc]{appendix}

\usepackage{subfloat}

\restylefloat{table}
%\numberwithin{equation}{section}

\newcommand{\argmin}{\operatornamewithlimits{argmin}}

% better fraction"

% draw figures
%\usepackage{tikz}
% Optional PGF libraries
%\usepackage{pgflibraryarrows}
%\usepackage{pgflibrarysnakes}
%\usepackage{xcolor}
%\usetikzlibrary{patterns,snakes}

% command: number with circle
%\usepackage{tikz}
%\newcommand*\circled[1]{\tikz[baseline=(char.base)]{
%            \node[shape=circle,draw,inner sep=2pt] (char) {#1};}}

%------------------------- Theorem and Proof Environments -------------------------------------------------

% This section defines all the environments you might use.  Just type
% \begin{theorem, or corollary, or whatever}, then the optional name of the
% theorem inside {} (or empty {} if no name), then body of the theorem,
% corollary, whatever, also inside {} then \end{theorem, corollary, whatever}
%
% Notice when I use them in the paper, I put an optional "argument" to the function
% and this gives a name to the theorem

%\newenvironment{theorem}[1]{\vspace{.9cm}\noindent    {\bf Theorem {#1}}}{\vspace{.1cm}}
%\newenvironment{lemma}[1]{\vspace{.9cm}\noindent    {\bf Lemma {#1}}}{\vspace{.1cm}}
%\newenvironment{corollary}[1]{\vspace{.9cm}\noindent    {\bf Corollary {#1}}}{\vspace{.1cm}}
%\newenvironment{definition}{\vspace{.9cm}\noindent {\bf Definition}}{\vspace{.1cm}}
\def\qed{\hfill $\Box$}
\renewenvironment{proof}{\vspace{.01cm}   \noindent{\bf Proof }}{\qed \vspace{.1cm}}

\newtheoremstyle{mythmstyle}
{1em} % Space above
{0.1em} % Space below
{\itshape} % Body font
{} % Indent amount (empty = no indent, \parindent = para indent)
{\bfseries} % Thm head font
{} % Punctuation after thm head
{1em} % Space after thm head: " " = normal interword space; \newline = linebreak
{} % Thm head spec (can be left empty, meaning `normal')

\theoremstyle{mythmstyle}
\newtheorem{theorem}{Theorem}[section]
\newtheorem{corollary}{Corollary}[section]

\newtheorem{lemma}{Lemma}[section]
\newtheorem{proposition}{Proposition}[section]

\newtheorem{remark}{Remark}[section]
\newtheorem{conjecture}{Conjecture}[section]
%\theoremstyle{definition}
%\newtheorem{notation}[theorem]{Notation}

%\newtheorem{remark}[theorem]{Remark}

%\newtheorem{observation}[theorem]{Observation}
%\newtheorem{definition}[theorem]{Definition}

% ---------------------- Define equation environment ------------------------------
%\renewcommand{\theequation}{\thechapter.\arabic{equation}}
%\renewcommand{\theequation}{\arabic{equation}}

% \makeatletter \@addtoreset{equation}{section} \@addtoreset{equation}{subsection} \makeatother
% \renewcommand\theequation{\thesection.\arabic{equation}}

\makeatletter \@addtoreset{equation}{section}

\usepackage[noabbrev]{cleveref}

\begin{document}
\pagenumbering{gobble} 
\title{\LARGE {\bf Analysis of sparse grid multilevel estimators for multi-dimensional Zakai equations}}
\author{Christoph Reisinger\footnote{Mathematical Institute, University of Oxford, Andrew Wiles Building, Woodstock Road, Oxford, OX2 6GG, UK, E-mail: christoph.reisinger@maths.ox.ac.uk}
\ and Zhenru Wang\footnote{Mathematical Institute, University of Oxford, Andrew Wiles Building, Woodstock Road, Oxford, OX2 6GG, UK, E-mail: zhenru.wang@maths.ox.ac.uk}    
 }

\date{}
\maketitle
\setcounter{secnumdepth}{3}
\setcounter{tocdepth}{2}
%\tableofcontents
%\newpage
\pagenumbering{arabic}

\abstract{In this article, we analyse the accuracy and computational complexity of estimators for expected functionals
of the solution to multi-dimensional parabolic stochastic partial differential equations (SPDE) of Zakai-type. Here, we use the
Milstein scheme for time integration and an alternating direction implicit (ADI) splitting of the spatial finite difference discretisation, coupled with the
sparse grid combination technique and multilevel Monte Carlo sampling (MLMC).
In the two-dimensional case, we find
by detailed %comparison between MLMC, sparse grids, and standard Monte Carlo (MC) method, we find
Fourier analysis that for a root-mean-square error (RMSE)~$\varepsilon$, MLMC on sparse grids has the optimal complexity $O(\varepsilon^{-2})$, whereas MLMC on regular grids has $O(\varepsilon^{-2}(\log\varepsilon)^2)$, standard MC on sparse grids $O(\varepsilon^{-7/2}(|\log\varepsilon|)^{5/2})$, and MC on regular grids $O(\varepsilon^{-4})$. Numerical tests confirm these findings empirically.
We give a discussion of the higher-dimensional setting without detailed proofs, which suggests that MLMC on sparse grids always leads to the optimal complexity,
standard MC on sparse grids has a fixed complexity order independent of the dimension (up to a logarithmic term), whereas the cost of MLMC and MC on regular grids increases exponentially with the dimension.
}

\section{Introduction}

%This paper is motivated by the Zakai stochastic partial differential equation (SPDE) of the form (see \cite{bain2009fundamentals,gobet2006discretization})
%\begin{equation}\label{eq_zakai}
%\mathrm{d}v(t,x) = \bigg(\frac{1}{2}\sum_{i,j=1}^d\frac{\partial^2}{\partial x_i\partial x_j}\big[a_{ij}(x)v(t,x)\big] - \sum_{i=1}^d\frac{\partial}{\partial x_i}\big[b_i(x)v(t,x)\big]\bigg)\,\mathrm{d}t- \nabla\big[\gamma(x)v(t,x)\big]\,\mathrm{d}W_t,
%\end{equation}s
%where $W$ is a $m$-dimensional standard Brownian motion, and $a$ is a $d\times d$ matrix-valued function, $b$ is a $\mathbb{R}^d$-valued function, and $\gamma$ is a $d\times m$ matrix-valued function.
%This Zakai equation arises from a nonlinear filtering problem: given an observation process $W$ and a $\mathbb{R}^d$ signal process $Z$, we want to estimate the conditional distribution of $Z$ given $W$. Here $Z$ satisfies
%\begin{equation}\label{eq_2dfiltering}
%Z_t = Z_0 + \int_0^t\beta(Z_s)\,\mathrm{d}s + \int_0^t\sigma(Z_s)\,\mathrm{d}B_s + \int_0^t\gamma(Z_s)\,\mathrm{d}W_s,
%\end{equation}
%where $B$ is a $d$-dimensional standard Brownian motion independent of $W$, $\sigma$ is a $d\times d$-matrix valued function, and $\beta$ is a $\mathbb{R}^d$-valued function. The conditional distribution function of $Z$ given $W$ has a density $v$, and it is proved in \cite{kurtz1999particle} that $v$ satisfies \eqref{eq_zakai} with
%$$a = \sigma\sigma^\top + \gamma\gamma^\top,\qquad b = \beta.$$
%

The focus of this paper is the efficient simulation of the two-dimensional SPDE
\begin{eqnarray}\label{eq_SPDE}
\mathrm{d}v &=& Lv \,\mathrm{d}t - \sqrt{\rho_x}\frac{\partial v}{\partial x}\,\mathrm{d}W_t^x - \sqrt{\rho_y}\frac{\partial v}{\partial y}\,\mathrm{d}W_t^y
%\\
%Lv &=& -\mu_x\frac{\partial v}{\partial x} -\mu_y\frac{\partial v}{\partial y} + \frac{1}{2}\bigg(\frac{\partial^2 v}{\partial x^2} + 2\sqrt{\rho_x\rho_y}\rho_{xy}\frac{\partial^2v}{\partial x\partial y} + \frac{\partial^2 v}{\partial y^2} \bigg),
%\nonumber
\end{eqnarray}
for $x,y\in\mathbb{R},\ 0<t\leq T$,
subject to the Dirac initial datum
\begin{equation}\label{eq_2dDiracInitial}
v(0,x,y) = \delta(x-x_0) \otimes \delta(y-y_0)
\end{equation}
for given $x_0$ and $y_0$,
where 
$W=(W^x,\,W^y)$ is a two-dimensional standard Brownian motion with correlation $\rho_{xy}$ on a probability space
$(\Omega,\mathcal{F},\mathbb{P})$,
\begin{eqnarray}
Lv &=& -\mu_x\frac{\partial v}{\partial x} -\mu_y\frac{\partial v}{\partial y} + \frac{1}{2}\bigg(\frac{\partial^2 v}{\partial x^2} + 2\sqrt{\rho_x\rho_y}\rho_{xy}\frac{\partial^2v}{\partial x\partial y} + \frac{\partial^2 v}{\partial y^2} \bigg),
\nonumber
\end{eqnarray}
$\mu_x,\mu_y$ and $0\leq\rho_x,\rho_y< 1$, $-1\leq\rho_{xy}\leq 1$ are real-valued parameters.

This is a special case of the Zakai equation from stochastic filtering where $v$ describes the distribution of the filter given a signal process $W$ (see \cite{bain2009fundamentals,gobet2006discretization}).

A classical result states that, for a class of SPDEs including \eqref{eq_SPDE}, with initial condition in $L_2$, there exists a unique solution $v\in L_2(\Omega\times(0,T), \mathcal{F}, L_2(\mathbb{R}^2))$ \cite{krylov1981stochastic}. This does not include Dirac initial data \eqref{eq_2dDiracInitial}, but in fact, the solution to \eqref{eq_SPDE} and \eqref{eq_2dDiracInitial} can be analytically derived as the smooth (in $x$ and $y$) function
\begin{equation}\label{eq_2dTheoreticalResult}
v(T,x,y) = \frac{\exp\Big(-\frac{\big(x-x_0-\mu_x T-\sqrt{\rho_x}W_T^x\big)^2}{2(1-\rho_x)T}-\frac{\big(y-y_0-\mu_y T-\sqrt{\rho_y}W_T^y\big)^2}{2(1-\rho_y)T}\Big)}{2\pi\sqrt{(1-\rho_x)(1-\rho_y)}\,T}\,.
\end{equation}

More commonly, however, such a closed-form solution is not available, for instance in the case of variable coefficients.
We will focus on \eqref{eq_SPDE} for the analysis, but the numerical methods we investigate apply similarly to such a wider class. 

There is a large body of recent literature on the numerical solution of SPDEs. Most closely related to the present work,
Giles and Reisinger in \cite{ref2} used an explicit Milstein finite difference approximation to the solution of the one-dimensional SPDE 
\begin{equation}\label{eq_1dspde}
\mathrm{d}v = -\mu\frac{\partial v}{\partial x}\,\mathrm{d}t + \frac{1}{2}\frac{\partial^2 v}{\partial x^2}\,\mathrm{d}t - \sqrt{\rho}\frac{\partial v}{\partial x}\,\mathrm{d}W_t,\qquad (t,x)\in(0,T)\times\mathbb{R},
\end{equation}
where $T>0$, $W$ is a standard Brownian motion, and $\mu$ and $0\le \rho<1$ are real-valued parameters; \cite{ref1} extended the discretisation to an implicit method on the basis of the $\sigma$-$\theta$ time-stepping scheme.
%, where the drift and the deterministic part of the double stochastic integral are taken implicit.
This 1-d SPDE \eqref{eq_1dspde} has also been used to model default risk in large credit portfolios (see \cite{bush2011stochastic}).

For the 2-d SPDE \eqref{eq_SPDE}, we will use an implicit method such that under some constraints on $\rho_x,\rho_y,\rho_{xy}$, the scheme is unconditionally mean-square stable. Furthermore, we use an Alternating Direction Implicit (ADI) factorisation which is more convenient computationally than a purely implicit scheme, and is also unconditionally mean-square stable (see \cite{reisinger2018stability}).

We consider the following functional of the solution,
\begin{equation}
\label{eq_2dlossfunctional}
P_t = \int_0^\infty\int_0^\infty v(t,x,y)\,\mathrm{d}x\,\mathrm{d}y, %=  \mathbb{P}\left( X_t> 0,\,Y_t>0 \mid W_t \right).
\end{equation}
which is a two-dimensional version of the loss in \cite{bush2011stochastic, ref2} which represents the proportion of the defaulted firms there.
In the context of filtering, $P_t$ is related to the cumulative distribution function of the filter given the signal.

The functional in \eqref{eq_2dlossfunctional} is a special case of more general linear and nonlinear functionals of the form
\begin{equation*}
%\label{eq_2dlossfunctional_g}
\int_{-\infty}^\infty\int_{-\infty}^\infty f(x,y) v(t,x,y)\,\mathrm{d}x\,\mathrm{d}y,
\qquad g\Big( \int_{0}^\infty\int_{0}^\infty v(t,x,y)\,\mathrm{d}x\,\mathrm{d}y\Big),
\end{equation*}
with $f$ being the Heaviside function and $g$ the identity in the case of $P_t$.
Preliminary derivations indicate that our analysis may be extended from $P_t$ to these cases for sufficiently smooth $f$ and $g$, by judicious multivariate Taylor expansion,
but this involves exceedingly lengthy calculus and is beyond the scope of this work.

%To illustrate the approach more concisely, we give here a proof of the approximation of $\mathbb{E}[P_t]$ with $P$ from \eqref{eq_2dlossfunctional}.
%The analysis also applies to $\mathbb{E}[g(P_t)]$ for a twice differentiable function $g$.

% Given the initial condition is arbitrary and the coefficients constant, there is no loss in generality in studying $(0,0)$.
% i.e., the mass lost at the absorbing boundary. In the credit risk application of \cite{bujok2012numerical}, $L$ describes the loss in a structural credit model, i.e., the fraction of firms whose values have crossed zero and hence being considered defaulted. The values of credit products are often functions of the loss $L_t$.
%Practically, we just obtain a sample $P_t$ given an observation of $W_t$, but we do not know how far the sample deviates from its expectation, nor do we have a conception of its law. So it is interesting to study $\mathbb{E}[g(P_t)]$, to have a better understanding of the law of $P_t$ given $W_t$. 

A classical approach to approximating $\mathbb{E}[P_t]$ is the standard Monte Carlo method,
% with an estimator
%\[
%\widehat{Y}= \frac{1}{M}\sum_{m=1}^M \widehat{P}^{(m)},
%\]
%where $\widehat{P}^{(m)}$ arises from a single, the $m$-th, sample of $P$, 
using a suitable approximation scheme for \eqref{eq_SPDE} and sampling of $W$ on a discrete time mesh. 
For the SPDE \eqref{eq_SPDE} and standard schemes, to achieve a root mean square error (RMSE) $\varepsilon$, this requires an overall computational cost $O(\varepsilon^{-4})$,
as we require $O(\varepsilon^{-2})$ samples, $O(\varepsilon^{-1})$ time steps (e.g., for the Euler-Maruyama scheme with weak order 1), and $O(\varepsilon^{-1/2})$ mesh points in each direction (e.g., for central difference schemes with order 2). One way to reduce the cost is the MLMC method (see \cite{giles2008multilevel}) by using the SPDE solution on paths with a coarse timestep and spatial mesh as a control variate of solutions on paths with a fine timestep and mesh.
%Let $P_l$ be an approximation to the functional $P$, using a numerical discretisation with parameters $k_l$ and $h_l$. The MLMC estimator is based on recursive control variates
%embedded in the identity
%\begin{equation}\label{eq_mlmcsetting}
%\mathbb{E}[P_{l^\ast}] = \mathbb{E}[P_0] + \sum_{l=1}^{l^\ast}\mathbb{E}[P_l-P_{l-1}],
%\end{equation}
%where $l^\ast$ is a maximum refinement level. The variance is then reduced and less computation time is needed for a fixed accuracy. 
%In this case $\mathbb{E}\big[|P_l-P_{l-1}|^2\big] = O\big((h_x^2+h_y^2+k)^2\big)$, which is small for large $l$, compared to the variance of the estimator for $\mathbb{E}\big[P_{l^\ast}\big]$. 
As a result, for a fixed accuracy $\varepsilon$, the cost can be reduced significantly to $O(\varepsilon^{-2}(\log \varepsilon)^2)$ by standard MLMC methodology as in \cite{ref2}. However, this complexity of the MLMC method is not optimal as in the one-dimensional case in \cite{ref2}. The reason is that the cost of each sample on higher levels increases with the same order as the variance decays. %, such that the optimal complexity of $O(\varepsilon^{-2})$ is lost in MLMC. 
Moreover, the total cost of MLMC increases exponentially in the dimension.

The approach taken here is to approximate the SPDE \eqref{eq_SPDE} by the sparse grid combination technique. Sparse grids were first introduced to solve high-dimensional PDEs on a tensor product space in \cite{zenger1997sparse}. %using finite element method. 
The error bounds in \cite{bungartz1998finite} show that sparse finite element approximations can alleviate the curse of dimensionality in the numerical implementation of certain elliptic PDEs with sufficiently smooth solutions. In contrast to the finite element method, the combination technique, first introduced in \cite{griebel1990combination}, decomposes the solution into contributions from simple tensor product grids with different resolutions in each dimension. 
For a survey of methods and early results see \cite{bungartz2004sparse}.
The analysis in \cite{pflaum1999error} and \cite{reisinger2012analysis} shows that the computational cost for the combination technique applied to the Poisson problem with sufficient regularity is independent of the dimension, up to a logarithmic term, for finite elements and finite differences, respectively.
Hendricks, Ehrhardt and G{\"u}nther combine the sparse grid combination technique with the ADI scheme for diffusion equations in \cite{hendricks2016high}.

This sparse grid method has been extended to multi-index Monte Carlo (MIMC) in the context of SPDEs in \cite{ref5}. MIMC can be viewed as the sparse grid combination technique applied to equations with stochasticity, with optimised number of samples for the individual terms in the combination formula (the ``hierarchical surpluses''), akin MLMC. Giles, Kuo and Sloan summarised these ideas applied to elliptic PDEs with finite-dimensional uncertainty in the coefficients in \cite{giles2017combining}.
The MIMC method was applied in \cite{reisinger2018analysis} to a 1-d SPDE \eqref{eq_1dspde}, where the timestep and space mesh are coupled for stability. However, optimal complexity is not achieved in this space-time method. 

In this paper, given a fixed  timestep and Brownian path, we solve the SPDE using the sparse grid combination technique in space.
%More specifically, for a $d$-dimensional problem, we define first order difference operators $\Delta_i,\ 1\leq i\leq d$, such that
%\[
%\Delta_i P^N_{\bm{l}} = P^N_{\bm{l}} - P^N_{\bm{l}-\bm{e}_i}\mathbbm{1}_{l_i>0},
%\]
%where $\bm{e}_i$ is the unit vector in direction $i$, $N$ is the fixed number of time steps, and $P^N_{\bm{l}}$ is the  estimator of level ${\bm{l}}$. Then the telescoping sum becomes
%\begin{equation}\label{eq_sgridssol_singlesample}
%\sum_{\bm{l} \in \mathbb{N}^d}\bigg(\prod_{i=1}^d\Delta_i\bigg)P_{\bm{l}}^N.
%\end{equation}
%The strategy is to neglect terms with large ${\bm{l}}$, where the cost is large and the contribution to the estimator is small, by carefully choosing an index set $\mathcal{I} \subset \mathbb{N}^d$ of terms to include. In this way we get one approximation of $\widehat{P}$ with all ${P_{\bm{l}}^N}$ using the same Brownian path.
Then, to evaluate $\mathbb{E}[P_t]$, we use $M$ independent samples of the hierarchical surpluses %$\widehat{P}$
 and calculate the average. The benefit here is that with a RMSE $\varepsilon$, the total cost is fixed with $O(\varepsilon^{-7/2})$ up to a logarithmic term, as the cost for one sample is $O(\varepsilon^{-3/2}|\log\varepsilon|^{5(d-1)/2})$, and the number of samples needed is $M = O(\varepsilon^{-2})$. Hence, this will improve on the complexity of the MLMC method, whose total cost is $O(\varepsilon^{-1-d/2})$, when the dimension $d>5$.

To recover the optimal complexity $O(\varepsilon^{-2})$, we further combine the sparse combination technique and MLMC, in a different way from standard MIMC.
%Under the MLMC setting, the level difference $P_l - P_{l-1}$ in \eqref{eq_mlmcsetting} is replaced with the difference of the solution under sparse combination method, where the timestep and index set vary with levels. 
In this way, the total cost is $O(\varepsilon^{-2})$ independent of dimension.

The rest of this article is structured as follows. We define the approximation schemes in Section~\ref{sec_approxandmainresults}. Section \ref{sec_2dMIMCEstimator} gives a Fourier analysis of the sparse combination estimators. 
Section \ref{sec_2dNumerical} shows numerical experiments confirming the above findings, and
Section \ref{sec_highdim} generalises the problem to higher dimensions. Section~\ref{sec_Conclusion} offers conclusions and directions for further research.

\section{Approximation and main results}\label{sec_approxandmainresults}

For simplicity of presentation, we initially restrict ourselves with the description of the schemes and their analysis %(in Section \ref{sec_2dMIMCEstimator}) 
to the two-dimensional case. The extension to higher dimensions is discussed in Section~\ref{sec_highdim}.

Moreover, we focus on the case of constant coefficients as in~\eqref{eq_SPDE} and the functional~\eqref{eq_2dlossfunctional}.
While the numerical method itself is directly applicable to the variable coefficient case %\eqref{eq_zakai},
and more general functionals, the Fourier analysis we perform is tailored to the present setting.

\subsection{Semi-implicit Milstein finite difference scheme}\label{sec_2dImplicitMilsteinScheme}

We use a spatial grid with uniform spacing $h_x,\,h_y>0$, and let $V_{i,j}^{n}$ be the approximation to $v(nk,ih_x,jh_y)$, $n=1,\ldots,N$, $i,j\in\mathbb{Z}$. 
We assume for simplicity that $i_0:=x_0/h_x$ and $j_0:=y_0/h_y$ are integers.
%$i_0\coloneqq[x_0/h_x],\ j_0\coloneqq[y_0/h_y]$, the closest integers to $x_0/h_x$ and $y_0/h_y$.
Then we approximate $v(0,x,y) = \delta(x-x_0) \otimes \delta(y-y_0)$
% $v(0,x,y) =\delta(x-x_0,y-y_0)$ 
by
\begin{equation}\label{eq_2dDiracInitialApprox}
V_{i,j}^0 = h_x^{-1}h_y^{-1}\delta_{(i_0,\,j_0)} = 
\begin{cases}
h_x^{-1}h_y^{-1},\quad &i=i_0,\ j=j_0,\\
0,&\text{otherwise}.
\end{cases}
\end{equation}
%To improve the accuracy of the approximation of $v$ in the present case of Dirac initial data, we subsequently choose $h_x$ and $h_y$ such that 
%$x_0$ and $y_0$ are on the grid, then 
%$x_0/h_x$ and $y_0/h_y$ are integers.

We use the semi-implicit Milstein scheme to approximate \eqref{eq_SPDE}, proposed in \cite{reisinger2018stability}:
\begin{equation}\label{eq_2DimplicitMilstein}
\begin{aligned}
%&\bigg(I + \frac{\mu_x k}{2h_x}D_x + \frac{\mu_y k}{2h_y}D_y - \frac{k}{2h_x^2}D_{xx} - \frac{k}{2h_y^2}D_{yy}\bigg)V^{n+1}\\
%=& \bigg(\! I \!- \!\frac{\sqrt{\rho_x k}Z_{n,x}}{2h_x}D_x \!-\! \frac{\sqrt{\rho_y k}\widetilde{Z}_{n,y}}{2h_y}D_y \!+\! \frac{\rho_xk(Z_{n,x}^2\!\!-\!1)}{8h_x^2}D_x^2 \!+\! \frac{\rho_yk(\widetilde{Z}_{n,y}^2\!\!-\!1)}{8h_y^2}D_y^2 \!+\!
%\frac{\sqrt{\rho_x\rho_y}kZ_{n,x}\widetilde{Z}_{n,y}}{4h_xh_y}D_{xy}\!\bigg)\!V^n\!,
\!\!\!\!\bigg(I + \frac{\mu_x k}{2h_x}D_x + \frac{\mu_y k}{2h_y}D_y - \frac{k}{2h_x^2}D_{xx} - \frac{k}{2h_y^2}D_{yy}\!\bigg)V^{n+1}\!
= \bigg(\! I \!- \!\frac{\sqrt{\rho_x k}Z_{n,x}}{2h_x}D_x \!-\! \frac{\sqrt{\rho_y k}\widetilde{Z}_{n,y}}{2h_y}D_y & \\
 + \frac{\rho_xk(Z_{n,x}^2\!\!-\!1)}{8h_x^2}D_x^2 \!+\! \frac{\rho_yk(\widetilde{Z}_{n,y}^2\!\!-\!1)}{8h_y^2}D_y^2 \!+\!
\frac{\sqrt{\rho_x\rho_y}kZ_{n,x}\widetilde{Z}_{n,y}}{4h_xh_y}D_{xy}\!\bigg)\!V^n\!& \\
%=: M(Z_{n,x},\widetilde{Z}_{n,y}) 
=: M_n V^n, \hspace{2.3 cm}  &
\end{aligned}
\vspace{-0.2 cm}
\end{equation}
where $M$ is a random operator and
\begin{align*}
&(D_xV)_{i,j} = V_{i+1,j}-V_{i-1,j},\qquad\qquad\qquad\quad\ \;
(D_yV)_{i,j} = V_{i,j+1}-V_{i,j-1},\\
&(D_{xx}V)_{i,j} = V_{i+1,j}-2V_{i,j}+V_{i-1,j},\quad\qquad\quad\;\,
(D_{yy}V)_{i,j} = V_{i,j+1}-2V_{i,j}+V_{i,j-1},\\
&(D_{xy}V)_{i,j} = V_{i+1,j+1}-V_{i-1,j+1}-V_{i+1,j-1}+V_{i-1,j-1},
\end{align*}
and $\widetilde{Z}_n^y = \rho_{xy}Z_n^x + \sqrt{1-\rho_{xy}^2}Z_n^y$ with $Z_n^x,Z_n^y\sim N(0,1)$ independent normals. % random variables.
Briefly, the terms on the left-hand side of \eqref{eq_2DimplicitMilstein} correspond to the implicit approximation of the operator $L$ in \eqref{eq_SPDE};
the second and third terms on the right-hand side are the Euler-Maruyama approximation of the stochastic integral; and the last three terms the Milstein correction for strong first order 1. Note that the cross-derivative term on the left-hand side cancels out with a Milstein term, as detailed in \cite{reisinger2018stability}.

To save computational cost, we combine the scheme with an Alternating Direction Implicit (ADI) factorisation \cite{ref6}, 
\begin{equation}\label{eq_ADIdifference}
\begin{aligned}
%&\bigg(I + \frac{\mu_x k}{2h_x}D_x - \frac{k}{2h_x^2}D_{xx}\bigg)\bigg(I + \frac{\mu_y k}{2h_y}D_y - \frac{k}{2h_y^2}D_{yy}\bigg)V^{n+1}\\
%=& \!\bigg(\!\! I \!-\! \frac{\sqrt{\rho_x k}Z_{n,x}}{2h_x}D_x \!-\! \frac{\sqrt{\rho_y k}\widetilde{Z}_{n,y}}{2h_y}D_y \!+\! \frac{\rho_xk(Z_{n,x}^2\!\!-\!1)}{8h_x^2}D_x^2 \!+\! \frac{\rho_yk(\widetilde{Z}_{n,y}^2\!\!-\!1)}{8h_y^2}D_y^2 \!+\!
%\frac{\sqrt{\rho_x\rho_y}kZ_{n,x}\widetilde{Z}_{n,y}}{4h_xh_y}D_{xy}\!\bigg)\!V^n\!.
\bigg(I + \frac{\mu_x k}{2h_x}D_x - \frac{k}{2h_x^2}D_{xx}\bigg)\bigg(I + \frac{\mu_y k}{2h_y}D_y - \frac{k}{2h_y^2}D_{yy}\bigg)V^{n+1}
%= M(Z_{n,x},\widetilde{Z}_{n,y}) V^n\!.
= M_n V^n\!.
\end{aligned}
\end{equation}

% An explicit scheme is also possible for \eqref{eq_SPDE},
% \begin{equation}\label{eq_2DexplicitMilstein}
% \begin{aligned}
% V^{n+1}=& \bigg( I -\frac{\mu_x k + \sqrt{\rho_x k}Z_{n,x}}{2h_x}D_x - \frac{\mu_y k + \sqrt{\rho_y k}\widetilde{Z}_{n,y}}{2h_y}D_y + \frac{k}{2h_x^2}D_{xx} + \frac{k}{2h_y^2}D_{yy}\\
% &\quad + \frac{\rho_xk(Z_{n,x}^2-1)}{8h_x^2}D_x^2 + \frac{\rho_yk(\widetilde{Z}_{n,y}^2-1)}{8h_y^2}D_y^2 +
% \frac{\sqrt{\rho_x\rho_y}kZ_{n,x}\widetilde{Z}_{n,y}}{4h_xh_y}D_{xy}\bigg)V^n,
% \end{aligned}
% \end{equation}
% with a requirement in $k$ to ensure the stability. 

A detailed analysis of the $L_2$ stability and convergence of these schemes can be found in \cite{reisinger2018stability}.
Here, we state the main result which is relevant here.
We make the following technical assumptions throughout\footnote{They are used in the proof of Lemma \ref{lem_2dSGridhighwave}.}:
\begin{align}
\label{eq_stablerhos}
2\rho_x^2(1+2|\rho_{xy}|) &\leq 1, \quad %\label{eq_stablerhos1}\\
2\rho_y^2(1+2|\rho_{xy}|) &\leq 1, \quad %\label{eq_stablerhos2}\\
2\rho_x\rho_y(3\rho_{xy}^2 + 2|\rho_{xy}|+1 ) &\leq 1. %\label{eq_stablerhos3}
\end{align}

\begin{theorem}[Corollary 2.1 in \cite{reisinger2018stability}]\label{rmk_2dImplicitConvergence}
Let $\lambda>0$ be fixed and \eqref{eq_stablerhos} be satisfied. Then there exists
$C>0$ such that for any $k,h_{x}, h_y$ with
\begin{equation}\label{eq_kcondition}
%\frac{k}{\min\{h_x^2, h_y^2\}} \leq \lambda,
k/\min\{h_x^2, h_y^2\} \leq \lambda,
\end{equation}
%the implicit Milstein scheme \eqref{eq_2DimplicitMilstein} has the error
%expansion
%\[
%V_{i,j}^N-v(T,x_i,y_j) = k\,E_1(T,x_i,y_j) + h_x^2\,E_2(T,x_i,y_j) + h_y^2\,E_3(T,x_i,y_j) + o(k,h_x^2,h_y^2)\,R(T,x_i,y_j).
%\]
%Then 
for the solution to the implicit Milstein scheme \eqref{eq_2DimplicitMilstein}, %converges pointwise in the mean-square sense, precisely, there 
% for the spatial $L_2$ norm, and
\[
\sqrt{\mathbb{E}\big[| V_{i,j}^N-v(T,ih_x,jh_y)|^2\big]} \le C (h_x^2 + h_y^2).
\]
%{\color{red} \fbox{CR This is not $L_2$.}}
\end{theorem}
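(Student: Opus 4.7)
The plan is to reduce the convergence statement to two ingredients already available in the framework of \cite{reisinger2018stability}: mean-square stability of the scheme \eqref{eq_2DimplicitMilstein} under conditions \eqref{eq_stablerhos}, and a bound on a suitably defined local truncation error. Introducing the grid error $e^n_{i,j} := V^n_{i,j} - v(nk,ih_x,jh_y)$ and the residual $r^n$ produced when the exact solution is plugged into the scheme, one obtains a recursion of the form
\begin{equation*}
A\, e^{n+1} = M_n\, e^n + r^n,
\end{equation*}
where $A$ denotes the deterministic implicit operator on the left of \eqref{eq_2DimplicitMilstein} (or its ADI factorisation in the case of \eqref{eq_ADIdifference}). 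Iterating and taking mean-square norms then gives a discrete Gronwall inequality that reduces the problem to a bound on $\|A^{-1} r^n\|_{L_2}$ in expectation.

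First I would work in Fourier space in $(x,y)$: since the initial datum is Dirac, its transform is bounded and the whole linear machinery can be diagonalised, with $A$, $M_n$ and the exact solution operator over one step all becoming multiplication operators whose symbols are polynomials in $\mathrm{i}h_x\xi_x,\mathrm{i}h_y\xi_y$ and the Brownian increments $\sqrt{k}Z_{n,x},\sqrt{k}\widetilde Z_{n,y}$. Combining the scheme symbol with $A^{-1}$ and comparing against the one-step symbol of the exact solution obtained from the stochastic Duhamel/Itô–Taylor formula for \eqref{eq_SPDE} yields, after Taylor expansion in $h_x\xi_x,h_y\xi_y$ and in $k$, a per-step symbol error of order
\begin{equation*}
k\,(h_x^2+h_y^2)\,|\xi|^2 \;+\; k^{3/2}\,\eta_n,
\end{equation*}
where the second term is a mean-zero martingale-like contribution controlled by the Milstein correction; the cross-derivative Milstein term is precisely what kills the leading $O(k)$ part of the cross iterated stochastic integral, as emphasised in \cite{reisinger2018stability}, and the ADI splitting error is $O(k^2\partial_x^2\partial_y^2)$ which is absorbed in the same bound.

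Next I would use the mean-square $L_2$-stability of $A^{-1}M_n$ under \eqref{eq_stablerhos}, proved in \cite{reisinger2018stability}, which in the Fourier picture states
\begin{equation*}
\mathbb{E}\!\left[|A^{-1}(\xi)M_n(\xi)|^2\right] \;\le\; 1+Ck \qquad \text{for all } \xi,
\end{equation*}
uniformly in $h_x,h_y$ once \eqref{eq_kcondition} holds. Squaring the error recursion, using independence of the increments $Z_{n,x},\widetilde Z_{n,y}$ across $n$ so that the martingale contributions of the residuals accumulate additively rather than linearly, and summing a discrete Gronwall inequality over $N=T/k$ steps gives a global bound of the form $C\bigl(h_x^2+h_y^2+k\bigr)$ on $\sqrt{\mathbb{E}|e^N_{i,j}|^2}$ after inverse Fourier transform. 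The mesh condition \eqref{eq_kcondition}, i.e.\ $k\le\lambda\min\{h_x^2,h_y^2\}$, then absorbs the temporal $O(k)$ contribution into $O(h_x^2+h_y^2)$, yielding the claimed bound.

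I expect the main obstacle to be the local consistency analysis: one has to carry the Itô–Taylor expansion of $v(nk+k,\cdot,\cdot)$ to sufficiently high order, identify by inspection those iterated stochastic integrals that the Milstein correction and the cross term in $M_n$ reproduce exactly, and show that the remainder's mean-square is of strict order $k^{3}$ (not only $k^{2}$) so that summation over $N=O(k^{-1})$ time steps still yields an $O(k)$ temporal contribution. Everything else — ADI splitting, stability constant, Fourier inversion of a Dirac-initialised problem — is routine once this calculus is in place and is carried out in \cite{reisinger2018stability}.
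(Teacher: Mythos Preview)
The paper does not give its own proof of this statement: Theorem~\ref{rmk_2dImplicitConvergence} is quoted verbatim as Corollary~2.1 of \cite{reisinger2018stability}, with no argument supplied here. There is therefore nothing in the present paper to compare your attempt against.

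That said, your sketch follows exactly the architecture one expects from \cite{reisinger2018stability} and from the Fourier machinery the paper deploys elsewhere (Section~\ref{sec_2dMIMCEstimator}): diagonalise in Fourier, establish the mean-square amplification bound $\mathbb{E}[|A^{-1}M_n|^2]\le 1+Ck$ under \eqref{eq_stablerhos}, expand the one-step symbol error to isolate an $O(k(h_x^2+h_y^2))$ drift part and an $O(k^{3/2})$ martingale part, and close with discrete Gronwall. The absorption of the $O(k)$ temporal error via \eqref{eq_kcondition} is the right final step. One point you pass over lightly but which matters for Dirac initial data: the pointwise grid error $e^0_{i,j}=V^0_{i,j}-v(0,ih_x,jh_y)$ is not well defined, so the argument cannot literally start from $e^0=0$; this is exactly why the Fourier route is essential rather than optional here, since $\widetilde V^0\equiv 1=\widetilde v(0,\cdot)$ and the comparison is made entirely at the symbol level before inverting. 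Your remark that the Dirac transform is bounded implicitly acknowledges this, but in a full proof you would need to make the inversion step (from symbol error to pointwise $\mathbb{E}|e^N_{i,j}|^2$) explicit, using the exponential decay of the exact and numerical symbols in $(\xi,\eta)$.
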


This convergence result also holds for the ADI scheme \eqref{eq_ADIdifference}. 
% For the explicit scheme, there is an additional stability condition.

% \begin{proposition}\label{prop_ExplicitStability}
% The explicit Milstein (finite difference) scheme \eqref{eq_2DexplicitMilstein} is stable in the mean-square sense  provided
% \begin{subequations}\label{eq_ExplicitStability}
% \begin{align}
% \frac{k}{h_x^2} &\leq \big(2+2\rho_x^2 + 2\rho_x\rho_y +\big(3\rho_x+\rho_y+4\rho_x^2 +4\rho_x\rho_y\big)|\rho_{xy}| + 6\rho_x\rho_y\rho_{xy}^2 \big)^{-1},\label{eq_ExplicitStability1}\\
% \frac{k}{h_y^2} &\leq \big( 2+2\rho_y^2 + 2\rho_x\rho_y +\big(\rho_x+3\rho_y+4\rho_y^2 +4\rho_x\rho_y\big)|\rho_{xy}| + 6\rho_x\rho_y\rho_{xy}^2\big)^{-1}.\label{eq_ExplicitStability2}
% \end{align}
% \end{subequations}
% \end{proposition}

\subsection{Sparse combination estimators}\label{sec_SparseGridsApprox}

Now we consider the specific functional
\begin{equation}\label{eq_2dLossTheoretical}
P_T = \int_0^\infty\int_0^\infty v(T,x,y)\,\mathrm{d}x\,\mathrm{d}y,
\end{equation}
as discussed in the introduction, where $v$ is the solution to \eqref{eq_SPDE} and \eqref{eq_2dDiracInitial}.

By introducing integer multi-indices $\bm{l} = (l_1,l_2)\in\mathbb{N}^2$ as the refinement levels of the spatial mesh in dimensions $x$ and $y$, respectively, we denote by
$P_{(l_1,\,l_2)}^N$ the discrete approximations to $P$ with mesh sizes $h_x = h_{0}\cdot2^{-l_1}$, $h_y = h_{0}\cdot2^{-l_2}$, and fixed timestep~$k$
%, where $k$ is small enough such that
with $k \leq \lambda \min\{h_x^2,h_y^2\}$. Let $N = T/k$. %It follows
We then use the trapezoidal approximation
\begin{equation}\label{eq_2dloss}
\begin{aligned}
P_{(l_1,\,l_2)}^N &= h_xh_y\sum_{i=1}^{\infty}\sum_{j=1}^{\infty} V_{i,j}^N + \frac{h_x}{2}\sum_{j=1}^{\infty}V_{0,j}^N + \frac{h_y}{2}\sum_{i=1}^{\infty}V_{i,0}^N + \frac{1}{4}h_xh_y V_{0,0}^N, %\\
%&= 1-\frac{1}{4\pi^2}\int_{-\frac{\pi}{h_y}}^{\frac{\pi}{h_y}}\int_{-\frac{\pi}{h_x}}^{\frac{\pi}{h_x}}  h_xh_yX_N(\xi,\eta)\chi(h_x,\xi)\chi(h_y,\eta)\,\mathrm{d}\xi\,\mathrm{d}\eta.
\end{aligned}
\end{equation}
where $V_{i,j}^N$ is the solution to \eqref{eq_2DimplicitMilstein}.
%where $\chi(h_x,\xi),\ \chi(h_y,\eta)$ is defined in a distributional sense as
%\begin{equation}\label{eq_2dchi}
%\chi(h_x,\xi) = \sum_{j=-i_0+1}^{\infty} \mathrm{e}^{\mathrm{i}jh_x\xi}+\frac{1}{2}\mathrm{e}^{-\mathrm{i}i_0h_x\xi},\qquad \chi(h_y,\eta) = \sum_{j=-j_0+1}^{\infty} \mathrm{e}^{\mathrm{i}jh_y\eta}+\frac{1}{2}\mathrm{e}^{-\mathrm{i}j_0h_y\eta}.
%\end{equation}

%Note that $\chi$ only appears multiplied by the smooth, fast decaying function $X_N$ and in integral form, such that this is well-defined.

\begin{proposition}\label{prop_2dErrorofLoss}
Assume \eqref{eq_stablerhos} holds.
Let $P$ be given by \eqref{eq_2dLossTheoretical} %with $v$ the solution to \eqref{eq_SPDE} and \eqref{eq_2dDiracInitial},
and $P^N_{(l_1,\,l_2)}$ by \eqref{eq_2dloss}. 
%Assume $0<\rho\leq 1/\sqrt{2}$ and $\lambda>0$ fixed. 
Then for fixed $\lambda>0$ there exists $C>0$ such that
for any $(l_1,l_2)\in \mathbb{N}_0^2$, $k \leq \lambda \min\{h_x^2,h_y^2\}$, %$\lambda\in\mathbb{R}_+$,
\begin{eqnarray}
\label{eqn:l2_conv}
\sqrt{\mathbb{E}\left[\lvert P^N_{(l_1,\,l_2)} - P\rvert^2\right]} \le C  (h_x^2 + h_y^2),
\end{eqnarray}
where $h_x = h_0\cdot 2^{-l_1},\,h_y = h_0\cdot 2^{-l_2}$, $N = T/k$.
\end{proposition}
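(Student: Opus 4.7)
The plan is to split the overall error into a pure quadrature error and a stochastic scheme error, bound each in $L^2(\Omega)$ by $C(h_x^2+h_y^2)$, and combine via the triangle inequality. Let $\widetilde{P}^N_{(l_1,l_2)}$ denote the trapezoidal sum from \eqref{eq_2dloss} with $V^N_{i,j}$ replaced by the exact pointwise values $v(T,ih_x,jh_y)$, and write
\[
P^N_{(l_1,l_2)} - P \;=\; \bigl(\widetilde{P}^N_{(l_1,l_2)} - P\bigr) \;+\; \bigl(P^N_{(l_1,l_2)} - \widetilde{P}^N_{(l_1,l_2)}\bigr),
\]
so that the first summand is a pure quadrature error applied to the exact (sample-path) Gaussian solution, and the second is a weighted sum of the scheme errors controlled by Theorem \ref{rmk_2dImplicitConvergence}.

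For the quadrature piece, the closed-form expression \eqref{eq_2dTheoreticalResult} shows that $v(T,\cdot,\cdot)$ is a smooth, rapidly decaying two-dimensional Gaussian pathwise, so the Euler--Maclaurin formula for the trapezoidal rule on the quadrant $[0,\infty)^2$ gives a pathwise bound $|\widetilde{P}^N_{(l_1,l_2)} - P|\le C(\omega)(h_x^2+h_y^2)$, where $C(\omega)$ depends on integrals of $\partial_x v(T,0,\cdot)$ and $\partial_y v(T,\cdot,0)$ along the axes and on $v(T,0,0)$; each of these is a smooth function of the Brownian values $(W_T^x,W_T^y)$ with finite second moment, yielding $\mathbb{E}\bigl[|\widetilde{P}^N_{(l_1,l_2)} - P|^2\bigr]=O\bigl((h_x^2+h_y^2)^2\bigr)$. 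For the scheme piece $h_x h_y\sum_{i,j} w_{i,j}\bigl(V^N_{i,j} - v(T,ih_x,jh_y)\bigr)$, a direct application of the pointwise bound in Theorem \ref{rmk_2dImplicitConvergence} followed by term-by-term summation diverges, so I would instead use a discrete Parseval / Fourier-symbol argument, in the spirit of the analysis to be carried out in Section \ref{sec_2dMIMCEstimator}. The idea is to exploit that the scheme \eqref{eq_2DimplicitMilstein} acting on the Dirac initial datum \eqref{eq_2dDiracInitialApprox} is diagonal in the spatial DFT basis, with a symbol that differs from the Fourier symbol of the continuous Zakai operator by $O(h_x^2+h_y^2)$ per mode; pairing with the Fourier representation of the trapezoidal weights and invoking the Gaussian-type decay of both symbols to keep the frequency integral finite then yields $\sqrt{\mathbb{E}|P^N_{(l_1,l_2)} - \widetilde{P}^N_{(l_1,l_2)}|^2}\le C(h_x^2+h_y^2)$.

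The main obstacle is clearly this last step: lifting the pointwise-in-grid $L^2(\Omega)$ bound of Theorem \ref{rmk_2dImplicitConvergence} to a sharp bound on the weighted infinite sum without losing the $O(h_x^2+h_y^2)$ order. The pointwise bound is far from tight in aggregate, because both $V^N$ and the restriction of $v$ concentrate on a ball of $O(\sqrt{T})$ radius around the random centre $(x_0+\mu_xT+\sqrt{\rho_x}W_T^x,\,y_0+\mu_yT+\sqrt{\rho_y}W_T^y)$ with Gaussian tails; the Fourier-symbol comparison is the ingredient that converts this concentration into the required estimate for the linear functional and reduces the proof to reading off the $O(h_x^2+h_y^2)$ error from the same symbol expansions that drive the analysis of the sparse combination estimators in Section \ref{sec_2dMIMCEstimator}.
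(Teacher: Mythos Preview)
Your proposal is sound and lands on the same key mechanism as the paper, namely the Fourier-symbol comparison of Section~\ref{sec_2dMIMCEstimator}. The paper itself does not spell out a proof here but simply defers to the one-dimensional analogue (Proposition~2.2 in \cite{reisinger2018analysis}), which is precisely the unified Fourier argument you invoke for the scheme piece.

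The one methodological difference worth flagging is your intermediate splitting via $\widetilde{P}^N_{(l_1,l_2)}$. The paper's (referenced) route works directly with the Fourier representation
\[
P^N_{(l_1,l_2)} = \frac{1}{4\pi^2}\iint h_x h_y\, X_N(\xi,\eta)\,\chi(h_x,\xi)\chi(h_y,\eta)\,\mathrm{d}\xi\,\mathrm{d}\eta,
\]
and compares $X_N$ to $X(T)$ and $h\chi(h,\cdot)$ to the continuous indicator transform in one go, splitting instead by wave-number region (low/mid/high). Your split is arguably more transparent for the quadrature contribution, where the Euler--Maclaurin argument on the explicit Gaussian \eqref{eq_2dTheoreticalResult} is clean and elementary. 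The price is that the ``scheme piece'' $P^N_{(l_1,l_2)}-\widetilde{P}^N_{(l_1,l_2)}$ now involves the grid restriction of $v(T,\cdot,\cdot)$, whose discrete Fourier representation is the \emph{periodised} continuous transform; you should note explicitly that the resulting aliasing error is super-algebraically small because $\widetilde{v}(T,\cdot,\cdot)$ is Gaussian, so it is absorbed into the $O(h_x^2+h_y^2)$ bound. With that caveat made explicit, your argument goes through and is equivalent in strength to the paper's.
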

\begin{proof}
Similar to Proposition 2.2 in \cite{reisinger2018analysis}.
\end{proof}

Let $\Delta_i$ be the first-order difference operator along directions $i=1,2$, defined as
\begin{equation}\label{eq_2dfirstorderdifference}
\Delta_iP_{\bm{l}}^N = 
\begin{cases}
P_{\bm{l}}^N-P_{\bm{l}-\bm{e}_i}^N,\quad &\text{if}\ l_i>0,\\
P_{\bm{l}}^N&\text{if}\ l_i=0,
\end{cases}
\end{equation}
with $\bm{e}_i$ the canonical unit vectors in $\mathbb{R}^2$, i.e., $(\bm{e}_i)_j = \delta_{ij}$, and $\bm{l}=(l_1,l_2)$.
We also define the first-order mixed difference operator $\Delta = \Delta_1\otimes\Delta_2$. Hence, for $l_1>0,l_2>0$,
\begin{equation}\label{eq_2dDeltaL}
\Delta P_{(l_1,\,l_2)}^N = P_{(l_1,\,l_2)}^N-P_{(l_1,\,l_2-1)}^N-P_{(l_1-1,\,l_2)}^N+P_{(l_1-1,\,l_2-1)}^N.
\end{equation}

We will prove in Section~\ref{sec_proofoflemmaSgrids} the following theorem.

\begin{theorem}\label{thm_sgridserror}
Assume \eqref{eq_stablerhos} holds. Let $h_x = h_{0}\cdot2^{-l_1},\ h_y = h_{0}\cdot2^{-l_2}$, and for any fixed $\lambda>0$, $k/\min\{h_x^2, h_y^2\} \leq \lambda$, $N = T/k$. Then for $\Delta P^N_{(l_1,\,l_2)}$ from \eqref{eq_2dDeltaL}, %the first and second moments of $\Delta P^N_{(l_1,\,l_2)}$ from \eqref{eq_2dDeltaL} satisfy
\begin{equation}
\label{eqn:delta_order}
\Big|\mathbb{E}\left[\Delta P^N_{(l_1,\,l_2)}\right]\Big| = O(h_x^2h_y^2),\qquad
\mathbb{E}\Big[\left|\Delta P^N_{(l_1,\,l_2)}\right|^2\Big]= O(h_x^4h_y^4).
\end{equation}
\end{theorem}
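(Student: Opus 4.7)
The plan is to carry out a pathwise semi-discrete Fourier analysis. Since the coefficients of the scheme \eqref{eq_2DimplicitMilstein}--\eqref{eq_ADIdifference} do not depend on $(i,j)$ and the initial datum \eqref{eq_2dDiracInitialApprox} is a Kronecker delta, the discrete Fourier transform diagonalises the time-stepping operator. Writing $V^N$ as a Fourier integral against a random amplification symbol $\prod_{n=0}^{N-1} m_n(\xi,\eta; h_x, h_y)$ (depending on the Brownian increments $Z_n^x, \widetilde{Z}_n^y$), the loss $P^N_{(l_1,l_2)}$ defined by \eqref{eq_2dloss} can be rewritten in frequency variables via Poisson summation, analogously to the one-dimensional analogue in \cite{ref2,reisinger2018analysis}. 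The continuous limit $P$ admits a corresponding representation via the Fourier transform of the closed form \eqref{eq_2dTheoreticalResult}.

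The heart of the argument is then to Taylor-expand the discrete symbol around its continuous counterpart in the two \emph{independent} parameters $h_x$ and $h_y$, yielding a pathwise error expansion of the form
\begin{equation*}
P^N_{(l_1,l_2)} - P \;=\; h_x^{2}\, E_x(\omega; h_x) \;+\; h_y^{2}\, E_y(\omega; h_y) \;+\; h_x^{2} h_y^{2}\, E_{xy}(\omega; h_x, h_y) \;+\; R(\omega; h_x, h_y),
\end{equation*}
where crucially $E_x$ does not depend on $h_y$, $E_y$ does not depend on $h_x$, $E_{xy}$ is bounded in $L^2(\Omega)$ uniformly in $h_x, h_y$, and $R$ is of strictly higher order in both mesh parameters in $L^2$. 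The mixed difference operator $\Delta = \Delta_1\otimes\Delta_2$ in \eqref{eq_2dDeltaL} annihilates any term depending on only one of $h_x, h_y$, so $\Delta P^N_{(l_1,l_2)}$ inherits an $O(h_x^2 h_y^2)$ bound both in expectation and in second moment, giving \eqref{eqn:delta_order}.

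To justify the expansion for all frequencies, I would split the Fourier domain into a low-wave-number region, where $|\xi|h_x, |\eta|h_y$ are bounded by a small constant, and a high-wave-number complement. On the low region the amplification symbol is smooth in $h_x, h_y$ and a bivariate Taylor expansion produces the expansion above. On the high region the stability bounds under \eqref{eq_stablerhos} established in \cite{reisinger2018stability} (cf.\ the referenced Lemma~\ref{lem_2dSGridhighwave}) imply that the amplification factor is bounded strictly away from one in modulus, so this contribution decays super-algebraically in $h_x, h_y$ after $N = O(\min\{h_x,h_y\}^{-2})$ steps and may be absorbed into $R$. For the $L^2(\Omega)$ bound on $E_{xy}$ one uses the fact that $m_n$ is linear-affine in the independent Gaussian increments, so products over $n$ reduce to Gaussian moment computations; the CFL-type constraint \eqref{eq_kcondition} keeps the effective number of factors comparable across levels.

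The main obstacle will be controlling the mixed $\partial_{h_x^2}\partial_{h_y^2}$ derivative of the product symbol $\prod_n m_n$ uniformly in $N$ and $\bm{l}$. Differentiating the product generates sums over pairs of time indices of terms whose factors are themselves products of independent random symbols, and bounding these in $L^2$ requires that the derivatives of $\log m_n$ with respect to $h_x^2$ and $h_y^2$ have bounded Gaussian moments \emph{and} that the symbol's real part is suitably negative away from $(\xi,\eta)=(0,0)$. This is precisely where the Milstein correction and the stability assumption \eqref{eq_stablerhos} enter decisively: without the Milstein terms one would lose a full power of $h$ in the pathwise second moment, and without \eqref{eq_stablerhos} the amplification factor would not be uniformly bounded, preventing the term-by-term bounds that make the expansion rigorous.
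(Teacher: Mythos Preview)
Your proposal is correct and follows essentially the same route as the paper: semi-discrete Fourier representation of $P^N_{(l_1,l_2)}$, a split of the frequency domain, Taylor expansion of the amplification symbol at low wavenumbers, and stability-driven super-polynomial decay elsewhere. Two minor points of departure are worth flagging. First, the paper uses a \emph{three}-way split $\Omega_{\text{low}}\cup\Omega_{\text{mid}}\cup\Omega_{\text{high}}$ (Lemmas~\ref{lem_2dSGridlowwave}--\ref{lem_2dSGridhighwave}); in the middle region, where exactly one of $|\xi|,|\eta|$ is large, the decay is super-polynomial in only one of $h_x,h_y$, so your two-region claim that the complement of the low set decays super-algebraically ``in $h_x,h_y$'' is slightly too strong as stated. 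Second, rather than expanding $P^N-P$ first and then applying $\Delta$, the paper applies $\Delta$ first and decomposes the resulting low-frequency integrand $G_{(l_1,l_2)}$ in \eqref{eq_2dG2} into terms that separate the mixed differences of the discrete propagator $X_N^{(\cdot,\cdot)}$ from those of the trapezoidal quadrature symbol $h\chi(h,\cdot)$; this makes explicit that \emph{both} the time-stepping error and the quadrature error must factor tensorially, something your Poisson-summation remark alludes to but does not isolate.
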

\begin{proof}
See Section \ref{sec_proofoflemmaSgrids}.
\end{proof}

\begin{remark}
We emphasise that \eqref{eqn:delta_order} does not follow from \eqref{eqn:l2_conv}, but ascertains essentially  that the difference operator cancels out any terms in the error expansion which depend on $h_x$ or $h_y$ alone. Establishing this property, which depends on the regularity of the problem, is the crucial step for the application of the sparse combination technique in the deterministic case of PDEs (see \cite{griebel1990combination, bungartz2004sparse, reisinger2012analysis}).
It is precisely the condition required for the Multi-Index Monte Carlo method of \cite{ref5}. Indeed, the approach here is interpretable as a specific multi-index decomposition applied
to the spatial variables, treating time separately.
\end{remark}

%\begin{remark}
%{\color{red} \fbox{CR This does not look right.}}
%To generalise the problem to $\mathbb{E}[g(P_t)]$, for a twice differentiable function $g$, we can treat~$\Delta g(P^N_{(l_1,\,l_2)})$ as 
%\begin{align*}
%\Delta g(P_{(l_1,\,l_2)}^N) &= g(P_{(l_1,\,l_2)}^N)-g(P_{(l_1,\,l_2-1)}^N)-g(P_{(l_1-1,\,l_2)}^N)+g(P_{(l_1-1,\,l_2-1)}^N)\\
%&= g'(P_{(l_1-1,\,l_2-1)}^N)\Big(P_{(l_1,\,l_2)}^N - P_{(l_1-1,\,l_2)}^N - P_{(l_1,\,l_2-1)}^N + P_{(l_1-1,\,l_2-1)}^N\Big) \\
%&\quad + \frac{1}{2}g''(P_{(l_1-1,\,l_2-1)}^N)\Big(P_{(l_1,\,l_2)}^N - P_{(l_1-1,\,l_2)}^N \Big)\Big(P_{(l_1,\,l_2-1)}^N - P_{(l_1-1,\,l_2-1)}^N \Big)\\
%&\quad + \frac{1}{2}g''(P_{(l_1,\,l_2-1)}^N)\Big(P_{(l_1,\,l_2)}^N - P_{(l_1-1,\,l_2)}^N \Big)^2
%- \frac{1}{2}g''(P_{(l_1-1,\,l_2-1)}^N)\Big(P_{(l_1-1,\,l_2)}^N - P_{(l_1-1,\,l_2-1)}^N \Big)^2,
%\end{align*}
%the proof of first and second moments of $\Delta g(P^N_{(l_1,\,l_2)})$ is similar.
%\end{remark}

\begin{corollary}
With the same setting as in Theorem \ref{thm_sgridserror}, the first order differences of $P^N_{(l_1,l_2)}$ derived from \eqref{eq_2dloss} have the first and second moments
\begin{align*}
&\Big|\mathbb{E}\left[\Delta_1 P^N_{(l_1,\,l_2)}\right]\Big| = O(h_x^2),\qquad
\mathbb{E}\Big[\left|\Delta_1 P^N_{(l_1,\,l_2)}\right|^2\Big]= O(h_x^4), \\
&\Big|\mathbb{E}\left[\Delta_2 P^N_{(l_1,\,l_2)}\right]\Big| = O(h_y^2),\qquad
\mathbb{E}\Big[\left|\Delta_2 P^N_{(l_1,\,l_2)}\right|^2\Big]= O(h_y^4). 
\end{align*}
\end{corollary}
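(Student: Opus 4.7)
The corollary is a single-direction analogue of Theorem~\ref{thm_sgridserror}, and my plan is to prove it by rerunning the Fourier analysis that underpins that theorem, but with the mixed difference $\Delta=\Delta_1\otimes\Delta_2$ replaced by the single-direction difference $\Delta_1$; the $\Delta_2$ case then follows by the symmetry $x\leftrightarrow y$. Note that the bounds claimed here do not follow from Proposition~\ref{prop_2dErrorofLoss} alone, as the triangle inequality only gives $\mathbb{E}[|\Delta_1 P^N_{(l_1,l_2)}|^2]=O(h_x^4+h_y^4)$, and a genuine cancellation argument is required to remove the $h_y^4$ remainder.

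Concretely, I would first use the semi-discrete Fourier transform in $(x,y)$ to write the step \eqref{eq_2DimplicitMilstein} (or its ADI factorisation \eqref{eq_ADIdifference}) as multiplication by a random symbol $\widehat{M}_n(\xi,\eta;h_x,h_y)$. Since $V^0$ is the lattice delta \eqref{eq_2dDiracInitialApprox}, iterating the scheme and pairing against the Fourier transform of the trapezoidal weights in \eqref{eq_2dloss} expresses $P^N_{(l_1,l_2)}$ as an explicit Fourier integral of $\prod_{n=0}^{N-1}\widehat{M}_n$. The single difference $\Delta_1 P^N_{(l_1,l_2)}$ then reduces to the same integral with integrand $\prod_n\widehat{M}_n(\cdot;h_x,h_y)-\prod_n\widehat{M}_n(\cdot;2h_x,h_y)$.

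I would then split the $\xi$-integration into a low-wavenumber region $|\xi h_x|\le \pi/2$ and its complement, as in the proof of Theorem~\ref{thm_sgridserror}. The high-frequency contribution is controlled by Lemma~\ref{lem_2dSGridhighwave}, which under \eqref{eq_stablerhos} provides super-algebraic decay in $h_x$ via the contractive factors in the implicit part of the symbol. On the low-wavenumber region, the symbol is smooth in its $h_x$-argument, so a Taylor expansion at each step $n$ yields $\widehat{M}_n(\cdot;h_x)-\widehat{M}_n(\cdot;2h_x)=h_x^2 A_n(\xi,\eta;h_y)+O(h_x^4)$, where $A_n$ is a random symbol whose first and second moments in the increments $Z_n^x,\widetilde{Z}_n^y$ are bounded uniformly in $h_y$. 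Taking expectations, exploiting independence across time, and integrating in $(\xi,\eta)$ yields $|\mathbb{E}[\Delta_1 P^N_{(l_1,l_2)}]|=O(h_x^2)$ and $\mathbb{E}[|\Delta_1 P^N_{(l_1,l_2)}|^2]=O(h_x^4)$, with constants uniform in $h_y$.

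The main obstacle is exactly the one already overcome in Theorem~\ref{thm_sgridserror}, namely controlling the path-wise product $\prod_n\widehat{M}_n$ of random amplification factors under the CFL-type restriction $k/\min\{h_x^2,h_y^2\}\le\lambda$; mean-square stability under \eqref{eq_stablerhos} together with Lemma~\ref{lem_2dSGridhighwave} handles this. The present single-direction argument is, by comparison, strictly simpler than the mixed-difference one, since only the expansion in $h_x$ is needed and there are no $h_x h_y$ cross-cancellations to track.
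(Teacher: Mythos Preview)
Your proposal is correct in spirit, but it overlooks the much shorter route that the paper evidently intends. The statement is labelled a \emph{Corollary} of Theorem~\ref{thm_sgridserror} precisely because it follows from that theorem by a telescoping sum, without redoing any Fourier analysis: from the definition of $\Delta=\Delta_1\otimes\Delta_2$ one has
\[
\Delta_1 P^N_{(l_1,\,l_2)} \;=\; \sum_{l_2'=0}^{l_2}\Delta P^N_{(l_1,\,l_2')},
\]
and since $h_y^{(l_2')}=h_0 2^{-l_2'}\ge h_y$, the hypothesis $k\le\lambda\min\{h_x^2,h_y^2\}$ ensures Theorem~\ref{thm_sgridserror} applies at every $(l_1,l_2')$. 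The triangle inequality (in $L^1$ and $L^2$) and the geometric sum $\sum_{l_2'\ge 0}(h_0 2^{-l_2'})^2<\infty$ then give $|\mathbb{E}[\Delta_1 P^N_{(l_1,l_2)}]|=O(h_x^2)$ and $\mathbb{E}[|\Delta_1 P^N_{(l_1,l_2)}|^2]^{1/2}=O(h_x^2)$ with constants independent of $l_2$. The $\Delta_2$ case is symmetric.

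Your direct approach---rerunning the low/high wavenumber splitting for the single-direction difference---would also work, and is indeed strictly simpler than the mixed-difference analysis in Section~\ref{sec_2dMIMCEstimator}. Two small points if you pursue it: first, the threshold $|\xi h_x|\le\pi/2$ is not the one the paper uses ($|\xi|\le h_x^{-p}$ with $0<p<1/2$), and with your cut the Taylor remainder in $\xi h_x$ is merely bounded rather than small, so you would need to organise the expansion a bit differently; second, you invoke only Lemma~\ref{lem_2dSGridhighwave}, but since $h_y$ is fixed here the region $|\xi|$ large, $|\eta|$ small is not empty, and you also need the middle-wave estimate of Lemma~\ref{lem_2dSGridmidwave}. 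Neither is a genuine obstacle. The trade-off is clear: the telescoping argument is a two-line proof that leverages the harder result already in hand, while your route is self-contained and would in principle yield the first-order bounds without first proving the mixed ones.
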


Following the ideas in \cite{bungartz2004sparse, ref5}, given a sequence of index sets $\mathcal{I}_l= \{(l_1,\,l_2)\in\mathbb{N}_0^2:\,l_1+l_2\leq l+1\}$, the approximation on level $l$ is defined as 
\begin{equation}\label{eq_sgridsapprox}
P_l^N = \sum_{(l_1,\,l_2)\in\mathcal{I}_l} \Delta P_{(l_1,\,l_2)}^N.
\end{equation}
Note that we use the same $k$ for all $\Delta P_{(l_1,\,l_2)}^N$, $(l_1,\,l_2)\in\mathcal{I}_l$. We have the following.

\begin{theorem}\label{thm_sgridsapprox}
Assume \eqref{eq_stablerhos} holds. 
Let %$\mathcal{I}_l= \{(l_1,\,l_2)\in\mathbb{N}_0^2:\,l_1+l_2\leq l+1\}$ be the index set, 
$k\leq \lambda\, h_0^2\, 2^{-2l}$ for a fixed $\lambda>0$, $N = T/k$, $P$ given by \eqref{eq_2dLossTheoretical} %the functional of the solution to SPDE \eqref{eq_SPDE}, 
and $P_l^N$ %the approximation to $P$ 
given by \eqref{eq_sgridsapprox}. Then 
\begin{equation}
\sqrt{\mathbb{E}\left[\lvert P^N_{l} - P\rvert^2\right]} = O(l\,2^{-2l}).
\end{equation}
\end{theorem}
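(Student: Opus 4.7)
The plan is to bound $\sqrt{\mathbb{E}[|P - P_l^N|^2]}$ by inserting the full tensor-product approximation $P_{(l,l)}^N$ as a bridge between $P$ and $P_l^N$, and controlling the two resulting pieces separately. By the triangle inequality in $L^2(\Omega)$,
\begin{equation*}
\sqrt{\mathbb{E}\bigl[|P - P_l^N|^2\bigr]} \leq \sqrt{\mathbb{E}\bigl[|P - P_{(l,l)}^N|^2\bigr]} + \sqrt{\mathbb{E}\bigl[|P_{(l,l)}^N - P_l^N|^2\bigr]}.
\end{equation*}
The first term is controlled directly by Proposition \ref{prop_2dErrorofLoss}: with $h_x = h_y = h_0 \cdot 2^{-l}$, the assumption $k \leq \lambda h_0^2 \cdot 2^{-2l}$ is precisely $k \leq \lambda \min\{h_x^2, h_y^2\}$, so this term is $O(2^{-2l})$.

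For the second term, I would expand $P_{(l,l)}^N$ telescopically as $\sum_{(l_1, l_2) \in [0, l]^2} \Delta P_{(l_1, l_2)}^N$, which follows from unfolding $\Delta = \Delta_1 \otimes \Delta_2$ over the rectangular index set together with the boundary conventions in \eqref{eq_2dfirstorderdifference}. Subtracting $P_l^N$ leaves only the hierarchical surpluses outside the sparse index set,
\begin{equation*}
P_{(l,l)}^N - P_l^N = \sum_{\substack{0 \leq l_1, l_2 \leq l \\ l_1 + l_2 \geq l+2}} \Delta P_{(l_1, l_2)}^N.
\end{equation*}
A short check using $l_1, l_2 \leq l$ and $l_1 + l_2 \geq l+2$ shows that every index in this sum satisfies $l_1, l_2 \geq 2$, so the full mixed-difference estimate of Theorem \ref{thm_sgridserror} applies; moreover, since every such grid is coarser than $(l, l)$, the timestep hypothesis $k/\min\{h_x^2, h_y^2\} \leq \lambda$ holds uniformly with the same constant.

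Applying Minkowski's inequality in $L^2(\Omega)$ together with the bound $\sqrt{\mathbb{E}[|\Delta P_{(l_1, l_2)}^N|^2]} = O(2^{-2(l_1+l_2)})$ from Theorem \ref{thm_sgridserror} then yields
\begin{equation*}
\sqrt{\mathbb{E}\bigl[|P_{(l,l)}^N - P_l^N|^2\bigr]} \leq C \sum_{m \geq l+2} (m+1)\, 2^{-2m} = O(l \cdot 2^{-2l}),
\end{equation*}
where $m+1$ bounds the number of lattice points on the diagonal $l_1 + l_2 = m$ and the geometric-type tail is dominated by its leading term. Combined with the $O(2^{-2l})$ bound on the first piece, this produces the claimed $O(l \cdot 2^{-2l})$.

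The main subtlety I would guard against is precisely the stability-type condition $k/\min\{h_x^2, h_y^2\} \leq \lambda$ in the hypotheses of Theorem \ref{thm_sgridserror} and Proposition \ref{prop_2dErrorofLoss}: a naive telescoping of $P - P_l^N$ over all of $\mathbb{N}_0^2$ would force $k$ to shrink with arbitrarily large $l_1$ or $l_2$, a regime outside the range in which those results have been established. Bridging via the truncated reference $P_{(l,l)}^N$ is what lets me keep $k$ fixed at $\lambda h_0^2 \cdot 2^{-2l}$ while only ever bounding surpluses on grids no finer than $(l, l)$ in each coordinate, which is exactly the regime covered by both cited results.
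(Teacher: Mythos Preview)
Your overall strategy is sound and, as you correctly anticipate, more careful about the CFL-type hypothesis than the paper itself. There is, however, one combinatorial slip in your decomposition: the sparse index set $\mathcal{I}_l=\{(l_1,l_2):l_1+l_2\le l+1\}$ is \emph{not} contained in $[0,l]^2$, since it also contains $(l+1,0)$ and $(0,l+1)$. Hence
\[
P_{(l,l)}^N - P_l^N \;=\; \sum_{\substack{0\le l_1,l_2\le l\\ l_1+l_2\ge l+2}} \Delta P_{(l_1,l_2)}^N \;-\; \Delta P_{(l+1,0)}^N \;-\; \Delta P_{(0,l+1)}^N,
\]
and your displayed formula is missing the last two terms. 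This is easily repaired: by the Corollary to Theorem~\ref{thm_sgridserror} these boundary surpluses have $L^2$-norm $O(2^{-2(l+1)})$, and the required condition $k/\min\{h_x^2,h_y^2\}\le\lambda$ holds at $(l+1,0)$ and $(0,l+1)$ with constant $4\lambda$ (since $\min\{h_x^2,h_y^2\}=h_0^2\,2^{-2(l+1)}$). Thus they are absorbed into the $O(2^{-2l})$ budget and your conclusion stands.

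Your route is genuinely different from the paper's. The paper telescopes over all of $\mathbb{N}_0^2$, writing $P-P_l^N$ (up to an $O(k)$ term) as $\sum_{l_1+l_2>l+1}\Delta P_{(l_1,l_2)}^N$ and then summing the bounds from Theorem~\ref{thm_sgridserror} over that infinite tail. As you point out, this applies Theorem~\ref{thm_sgridserror} to grids arbitrarily finer than $h_0\,2^{-l}$, where the hypothesis $k\le\lambda\min\{h_x^2,h_y^2\}$ is no longer satisfied for the fixed $k$; the paper's proof is somewhat informal on this point and relies implicitly on the unconditional stability of the implicit scheme together with an $O(k)$ time-discretisation error that is not stated in Proposition~\ref{prop_2dErrorofLoss}. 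Your finite bridge via $P_{(l,l)}^N$ sidesteps this entirely: every surplus you bound lives on a grid no finer than $(l,l)$, so Theorem~\ref{thm_sgridserror} and Proposition~\ref{prop_2dErrorofLoss} apply verbatim with the stated $\lambda$. The trade-off is that the paper's argument, once the stability issue is accepted, gives an explicit constant and makes the $l\,2^{-2l}$ structure emerge transparently from the diagonal count, whereas your argument requires the extra bookkeeping above for the two stray boundary indices.
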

\begin{proof}
See Section \ref{sec_proofofSgridsApprox}.
\end{proof}

%The previous results apply to the specific functional $P$ in \eqref{eq_2dLossTheoretical}, but we can extend them to a more general class if functionals. First define
%\[
%C_b^{2,2,\mathrm{mix}}(\mathbb{R}^2) = \Big\{ f:\mathbb{R}^2\rightarrow\mathbb{R},\ \text{bounded  and with continuous} \frac{\partial^4 f}{\partial x^2 \partial y^2} \Big\}.
%\]
%The functional is given by
%\begin{equation}\label{eq_2dgeneralfunctional}
%P(v) = \int_{-\infty}^\infty\int_{-\infty}^\infty f(x,y)v(T,x,y)\,\mathrm{d}x\,\mathrm{d}y,
%\end{equation}
%where $f\in C_b^{2,2,\mathrm{mix}}(\mathbb{R}^2)$. Then we have the proposition as follows.
%\begin{proposition}
%Assume $f\in C_b^{2,2,\mathrm{mix}}(\mathbb{R}^2)$. Let $P$ as in \eqref{eq_2dgeneralfunctional} and
%\[
%P_{(l_1,\,l_2)}^N = h_xh_y\sum_{i=-\infty}^\infty \sum_{j=-\infty}^\infty f(x_i,y_j) V_{i,j}^N,
%\]
%where $x_i = ih_x,\ y_j = jh_y$. Then Proposition \ref{prop_2dErrorofLoss}, Theorem \ref{thm_sgridserror}, Theorem \ref{thm_sgridsapprox} still hold.
%\end{proposition}
%\begin{proof}
%We have proved the case where $f$ is a Heaviside function $H(0,0)$. The same proof also applies to $f\in C_b^{2,2,\mathrm{mix}}(\mathbb{R}^2)$ with small modifications
%and tedious calculus, which we do not reproduce here.
%\end{proof}
%
%In the following, we only consider $P$ in \eqref{eq_2dLossTheoretical}, and the case of\eqref{eq_2dgeneralfunctional} follows.

The standard Monte Carlo estimator for $\mathbb{E}\big[P_l^N\big]$ using $M$ samples is defined as
\begin{equation}
\widehat{Y} = \frac{1}{M}\sum_{m=1}^{M}\sum_{(l_1,\,l_2)\in\mathcal{I}_l}\!\!\! \Big(\Delta \widehat{P}_{(l_1,\,l_2)}^{N}\Big)^{(m)},
\end{equation}
where $\Big(\Delta \widehat{P}_{(l_1,\,l_2)}^{N}\Big)^{(m)}$ is the $m$-th sample for the difference on spatial grid level $(l_1,\,l_2)$ of the SPDE approximation using $N$ time steps.

To reduce the bias below $\varepsilon$, we can choose $l = \big[\frac{1}{2}(-\log_2\varepsilon + \log_2|\log\varepsilon|)\big]$, where~$[\,\cdot\,]$ is the closest integer. Since $k\leq \lambda h_0^2 2^{-2l} = O(2^{-2l})$,
the computational cost for one sample of $P_{(l_1,l_2)}^N$ is
\[
W = \sum_{l_1+l_2\leq l+1} 2^{l_1+l_2}\cdot O(k^{-1}) = O(l\,2^{3l}) = O\big(\varepsilon^{-\frac{3}{2}}|\log\varepsilon|^{\frac{5}{2}}\big).
\]
Using standard Monte Carlo sampling, we need $M = O(\varepsilon^{-2})$ samples to reduce the variance below $\varepsilon^2$, hence the total computational cost to achieve a RMSE $\varepsilon$ is
\[
W = O\big(\varepsilon^{-\frac{7}{2}}|\log\varepsilon|^{\frac{5}{2}}\big).
\]

\subsection{Sparse combination MLMC estimators}\label{sec_2dMLMC}
Next, we combine the sparse grid combination technique with the MLMC method. We will show that this combination leads to the optimal complexity $O(\varepsilon^{-2})$.

Let $P_l\coloneqq P_l^{N_l}$ be an approximation to $P$ as in \eqref{eq_sgridsapprox} using a numerical discretisation with timestep $k_l$ and index set $\mathcal{I}_l$, where
\[
\mathcal{I}_l = \{(l_1,\,l_2)\in\mathbb{N}_0^2:\,l_1+l_2\leq l+1\},\qquad k_l = k_0\cdot 2^{-2l},\qquad N_l = \frac{T}{k_l}.
\]
For $(l_1,\,l_2)\in\mathcal{I}_l$, the mesh sizes are $h_x^{(l_1)} = h_{0}\cdot2^{-l_1}$, $h_y^{(l_2)} = h_{0}\cdot2^{-l_2}$. By 
% choosing $\lambda< k_0/h_0^2$, $k_l = k_0\cdot 2^{-2l}<\min\{\big(h_x^{(l_1)}\big)^2,\big(h_y^{(l_2)}\big)^2\}$ for all $(l_1,\,l_2)\in\mathcal{I}_l$, and 
using different levels of refinement, we have the following identity:
\[
\mathbb{E}[P] = \mathbb{E}[P_0] + \sum_{l=1}^\infty\mathbb{E}[P_l - P_{l-1}].
\]
Then we define the difference operators (acting on the level)
\begin{equation}\label{eq_2dexplicitDeltaL}
\delta P_l = P_l - P_{l-1} = \sum_{(l_1,\,l_2)\in\mathcal{I}_l} \Delta P_{(l_1,\,l_2)}^{N_l} - \sum_{(l_1,\,l_2)\in\mathcal{I}_{l-1}} \Delta P_{(l_1,\,l_2)}^{N_{l-1}},\quad l\geq 0,
\end{equation}
where we denote $P_{-1}\coloneqq 0$. Thus, the approximation $\mathbb{E}[P_{l^\ast}]$ to $\mathbb{E}[P]$ at level $l^\ast$ is
\[
\sum_{l=0}^{l^\ast}\mathbb{E}[\delta P_l] = \mathbb{E}\bigg[ \sum_{(l_1,\,l_2)\in\mathcal{I}_0} \Delta P_{(l_1,\,l_2)}^{N_0}\bigg] + \sum_{l=1}^{l^\ast}\mathbb{E}\bigg[\sum_{(l_1,\,l_2)\in\mathcal{I}_l} \Delta P_{(l_1,\,l_2)}^{N_l} - \sum_{(l_1,\,l_2)\in\mathcal{I}_{l-1}} \Delta P_{(l_1,\,l_2)}^{N_{l-1}}\bigg] .
\]
Therefore, instead of directly simulating $P_{l^\ast}$, we simulate $\delta P_l,\,l=0,1,\ldots,l^\ast$ separately. The key point is that we use the same Brownian path for $P_l$ and $P_{l-1}$ to calculate $\delta P_l = P_l - P_{l-1}$ such that the variance is considerably reduced. We have:
%\begin{subequations}
\begin{equation}
\label{eq_deltaLl}
%\begin{align}
\mathbb{E}[\delta P_l] \leq C_1\cdot l\,2^{-2l}, \quad
\mathrm{Var}[\delta P_l] \leq C_2\cdot l^2\,2^{-4l}, \quad
\mathrm{Cost}[\delta P_l] \leq C_3\cdot l\,2^{3l}, 
\end{equation}
%\end{subequations}
where the first two inequalities follow from Theorem \ref{thm_sgridsapprox} and the third is immediate.

Let $\widehat{Y}_l$ be an estimator for $\mathbb{E}\big[\delta P_l\big]$ using $M_{l}$ independent samples $\delta \widehat{P}_{l}^{(m)}$ of $\delta P_l$,
%Each estimator is an average of $M_{l}$ independent samples $\delta \widehat{L}_{l}^{(m)}$, %is the $m$-th sample arising from a single SPDE approximation,
\begin{equation*}
\widehat{Y}_{l} = \frac{1}{M_{l}}\sum_{m=1}^{M_{l}}\delta \widehat{P}_{l}^{(m)},\quad l=1,\ldots,l^\ast.
\end{equation*}
The MLMC estimator is then defined as $\widehat{P}_{l^\ast} \coloneqq \sum_{l=0}^{l^\ast}\widehat{Y}_{l}$ or
\begin{eqnarray*}
\widehat{P}_{l^\ast} %&\coloneqq& \sum_{l=0}^{l^\ast}\widehat{Y}_{l} \\
&=&  \frac{1}{M_{0}}\sum_{m=1}^{M_{0}}\sum_{(l_1,\,l_2)\in\mathcal{I}_0}\!\!\! \Big(\Delta \widehat{P}_{(l_1,\,l_2)}^{N_0}\Big)^{(m)}  + \\
&& \qquad\qquad \sum_{l=1}^{l^\ast}\frac{1}{M_{l}}\sum_{m=1}^{M_{l}}\bigg[\sum_{(l_1,\,l_2)\in\mathcal{I}_l}\!\!\! \Big(\Delta \widehat{P}_{(l_1,\,l_2)}^{N_l}\Big)^{(m)} - \!\!\! \sum_{(l_1,\,l_2)\in\mathcal{I}_{l-1}}\!\!\! \Big(\Delta \widehat{P}_{(l_1,\,l_2)}^{N_{l-1}}\Big)^{(m)}\bigg],
\end{eqnarray*}
where $\Big(\Delta \widehat{P}_{(l_1,\,l_2)}^{N_l}\Big)^{(m)}$ is the $m$-th sample for the difference operator on spatial grid level $(l_1,\,l_2)$ using $N_l$ time steps. Following \cite[Theorem 3.1]{giles2008multilevel}, choosing $M_l$ to minimise the computational cost for a fixed variance, using \eqref{eq_deltaLl} and
noticing that the variance decreases strictly faster than the cost increases (the polynomial terms in $l$ being negligible),
%polynomial terms in $l$ do not matter,
we achieve the optimal complexity $O(\varepsilon^{-2})$.

\section{Fourier analysis of the sparse combination error expansion}\label{sec_2dMIMCEstimator}

%We have proved the mean-square stability and convergence for the implicit discretisation scheme \eqref{eq_2DimplicitMilstein} and the ADI scheme \eqref{eq_ADIdifference} in %\cite{reisinger2018stability}, we aim to calculate $\mathbb{E}[L]$. For fixed timestep $k$, we approximate $L$ with sparse grids technique given by \eqref{eq_sgridsapprox}, explained in Section \ref{sec_SparseGridsApprox}.

We will prove Theorem~\ref{thm_sgridserror} and Theorem \ref{thm_sgridsapprox} in this section.
We employ a Fourier transform and then analyse the different wave number regions separately, a technique that has been successfully
used to derive error expansions for numerical approximations to PDEs (see \cite{ref3}) and SPDEs (see \cite{reisinger2018analysis}).

\subsection{Fourier transform of the solution}\label{sec_2dMIMCFourierTransform}

Define  the Fourier transform pair
\begin{align*}
\widetilde{v}(t,\xi,\eta) &= \int_{-\infty}^\infty\int_{-\infty}^\infty v(t,x,y)\mathrm{e}^{-\mathrm{i}\xi x -\mathrm{i}\eta y}\,\mathrm{d}x\,\mathrm{d}y,\\
v(t,x,y)&=\frac{1}{4\pi^2}\int_{-\infty}^\infty\int_{-\infty}^\infty\widetilde{v}(t,\xi,\eta)\mathrm{e}^{\mathrm{i}\xi x + \mathrm{i}\eta y}\,\mathrm{d}\xi\,\mathrm{d}\eta.
\end{align*}
The Fourier transform of \eqref{eq_SPDE} yields
\begin{equation}\label{eq_fourier}
\mathrm{d}\widetilde{v} = 
-\bigg(\big(\mathrm{i}\mu_x\xi + \mathrm{i}\mu_y\eta + \frac{\xi^2}{2} + \sqrt{\rho_x\rho_y}\rho_{xy}\xi\eta + \frac{\eta^2}{2} \big)\,\mathrm{d}t 
+ \mathrm{i}\sqrt{\rho_x}\xi\,\mathrm{d}W_t^x + \mathrm{i}\sqrt{\rho_y}\eta\,\mathrm{d}W_t^y \bigg)\widetilde{v},
\end{equation}
subject to the initial data $\widetilde{v}(0) = \mathrm{e}^{-\mathrm{i}\xi x_0 -\mathrm{i}\eta y_0}.$
For the remainder of the analysis, we take $\mu_x = \mu_y =0$. This does not alter the results (see Remark 2.3 in \cite{ref1} for 1d).

The solution to \eqref{eq_fourier} is
$$\widetilde{v}(t) = X(t)\mathrm{e}^{-\mathrm{i}\xi x_0 -\mathrm{i}\eta y_0},$$
where
\begin{equation}\label{eq_solXn}
X(t) = \exp\bigg(\!\!-\frac{1}{2}(1-\rho_x)\xi^2t -\frac{1}{2}(1-\rho_y)\eta^2t -\mathrm{i}\xi\sqrt{\rho_x}W_t^x - \mathrm{i}\eta\sqrt{\rho_y}W_t^y\bigg).
\end{equation}

For the numerical solution, we can use a discrete-continuous Fourier pair
\begin{eqnarray*}
V_{i,j}^0 &=& \frac{1}{4\pi^2 h_xh_y}\int_{-\pi}^{\pi}\int_{-\pi}^{\pi}
\widetilde{V}^0(\xi,\eta)\mathrm{e}^{\mathrm{i}\big((i-i_0)\xi + (j-j_0)\eta\big)}\,\mathrm{d}\xi\,\mathrm{d}\eta, \\
\widetilde{V}^0(\xi,\eta) &=& h_xh_y\sum_{i=-\infty}^\infty\sum_{j=-\infty}^\infty
\ V_{i,j}^0\mathrm{e}^{\mathrm{i}\big(-(i-i_0)\xi-(j-j_0)\eta\big)},
\end{eqnarray*}
where $i_0 = x_0/h_x$, $j_0 = y_0/h_y$.
It follows from \eqref{eq_2dDiracInitialApprox} that $\widetilde{V}^0(\xi,\eta) = 1.$ Then we have
\begin{equation}\label{eq_VijFourier}
\begin{aligned}
V_{i,j}^n &= 
%\frac{1}{4\pi^2 h_xh_y}\int_{-\pi}^{\pi}\int_{-\pi}^{\pi}
%\widetilde{V}^n(\xi,\eta)\mathrm{e}^{\mathrm{i}\big((i-i_0)\xi + (j-j_0)\eta\big)}\,\mathrm{d}\xi\,\mathrm{d}\eta\\
 %&= 
 \frac{1}{4\pi^2 }\int_{-\frac{\pi}{h_y}}^{\frac{\pi}{h_y}}\int_{-\frac{\pi}{h_x}}^{\frac{\pi}{h_x}} 
\widetilde{V}^n(\xi,\eta)\mathrm{e}^{\mathrm{i}\big((i-i_0)\xi h_x + (j-j_0)\eta h_y\big)}\,\mathrm{d}\xi\,\mathrm{d}\eta,
\end{aligned}
\end{equation}
where we make the ansatz
\begin{equation}\label{eq_ansatzXn}
\widetilde{V}^n(\xi,\eta) = X_n(\xi,\eta)\widetilde{V}^0(\xi,\eta) = X_n(\xi,\eta).
\end{equation}

It follows from \eqref{eq_solXn} that the exact solution of $X(t_{n+1})$ given $X(t_n)$ is
\begin{equation*}%\label{eq_Xn1}
X(t_{n+1}) = X(t_n)\exp\bigg(\!\!-\frac{\xi^2}{2}(1-\rho_x) k -\frac{\eta^2}{2}(1-\rho_y)k -\mathrm{i}\xi\sqrt{\rho_xk}Z_{n,x} - \mathrm{i}\eta\sqrt{\rho_yk}\widetilde{Z}_{n,y}\bigg),
\end{equation*}
where $W_{t_{n+1}}^x-W_{t_n}^x =: \sqrt{k}Z_{n,x},\ W_{t_{n+1}}^y-W_{t_n}^y=: \sqrt{k}\widetilde{Z}_{n,y}$ are the Brownian increments.

Now we consider the numerical approximation of $X_n$. Let
\begin{align*}
\nonumber
&X_{n+1} = C_n\,X_n, \\
%\label{eq_Cn1}
&C_n := \exp\bigg(\!\!-\frac{\xi^2}{2}(1-\rho_x)k -\frac{\eta^2}{2}(1-\rho_y)k -\mathrm{i}\xi\sqrt{\rho_xk}Z_{n,x} - \mathrm{i}\eta\sqrt{\rho_yk}\widetilde{Z}_{n,y} + e_n\bigg),
\end{align*}
and $e_n$ is the logarithmic error between the numerical solution and the exact solution introduced during $[nk,(n+1)k]$.
Aggregating over $N$ time steps, at $t_N = kN = T$, 
\begin{equation}\label{eq_XN}
X_N = \prod_{n=0}^{N-1} C_n = X(T)\exp\bigg(\sum_{n=0}^{N-1}e_n\bigg),
\end{equation}
where 
\begin{equation*}%\label{eq_2dXT}
X(T) = \exp\bigg(\!\!-\frac{\xi^2}{2}(1-\rho_x)T -\frac{\eta^2}{2}(1-\rho_y)T -\mathrm{i}\xi\sqrt{\rho_xk}\sum_{n=0}^{N-1}Z_{n,x} - \mathrm{i}\eta\sqrt{\rho_yk}\sum_{n=0}^{N-1}\widetilde{Z}_{n,y}\bigg)
\end{equation*}
is the exact solution at time $T$. 
Moreover, inserting $V_{i,j}^n$ from \eqref{eq_2DimplicitMilstein} to \eqref{eq_VijFourier}, we have
\begin{equation}\label{eq_Xn2}
\begin{aligned}
X_{n+1}(\xi,\eta) &= \frac{1}{1-(a_x+a_y)k}\Big(1 -\mathrm{i}c_x\sqrt{\rho_xk}Z_{n,x} -\mathrm{i}c_y\sqrt{\rho_yk}\widetilde{Z}_{n,y}\\
&\quad + b_x\rho_xk(Z_{n,x}^2-1) + b_y\rho_yk(\widetilde{Z}_{n,y}^2-1) + d\sqrt{\rho_x\rho_y}kZ_{n,x}\widetilde{Z}_{n,y}\Big)X_n(\xi,\eta),
\end{aligned}
\end{equation}
where
\begin{align*}
a_x &= -\frac{2\sin^2\frac{\xi h_x}{2}}{h_x^2},\quad
b_x = -\frac{\sin^2\xi h_x}{2h_x^2},\quad
c_x = \frac{\sin \xi h_x}{h_x},\quad
d = -\frac{\sin\xi h_x\sin\eta h_y}{h_xh_y},\\
a_y &= -\frac{2\sin^2\frac{\eta h_y}{2}}{h_y^2},\quad
b_y = -\frac{\sin^2\eta h_y}{2h_y^2},\quad
c_y = \frac{\sin \eta h_y}{h_y}.
\end{align*}

Hence, $C_n$ can also be expressed as
\begin{equation*}%\label{eq_Cn2}
%\!C_n = \frac{1 \!-\!\mathrm{i}c_x\sqrt{\rho_xk}Z_{n,x} \!-\! \mathrm{i}c_y\sqrt{\rho_yk}\widetilde{Z}_{n,y} \!+\! b_x\rho_xk(Z_{n,x}^2\!-\!1) + b_y\rho_yk(\widetilde{Z}_{n,y}^2-1) \!+\! d\sqrt{\rho_x\rho_y}kZ_{n,x}\widetilde{Z}_{n,y}}{1-(a_x+a_y)k}.
%\!C_n = 
\frac{1 \!-\!\mathrm{i} \sqrt{k} \big(c_x\sqrt{\rho_x}Z_{n,x} \!+\! c_y\sqrt{\rho_y}\widetilde{Z}_{n,y}\big) \!+\! 
k \big(b_x\rho_x(Z_{n,x}^2\!\!-\!1) + b_y\rho_y(\widetilde{Z}_{n,y}^2\!\!-\!1) \!+\! d\sqrt{\rho_x\rho_y}Z_{n,x}\widetilde{Z}_{n,y}\big)}{1-(a_x+a_y)k}.
\end{equation*}

\subsection{Fourier transform of the sparse combination estimator}\label{sec_sgridsestimator}

%By introducing integer multi-indices $\bm{l} = (l_1,l_2)\in\mathbb{N}^2$ as the index pairs of space, we denote $P^N_{(l_1,\,l_2)}$ as the discrete approximations to $P$ with mesh size $h_x = h_{0}\cdot2^{-l_1}$, $h_y = h_{0}\cdot2^{-l_2}$ and time steps $N$. Now we fixed $k$ such that $k\leq \lambda \min\{h_x^2,h_y^2\}$, where $\lambda$ is a constant. Then $N$ is fixed. From now on we omit $N$ to simplify the notation.
With the notation from Section \ref{sec_SparseGridsApprox}, omitting $N$ in $P$ to simplify the notation,
\begin{equation}%\label{eq_2dloss}
\begin{aligned}
P_{(l_1,\,l_2)} &= h_xh_y\sum_{i=1}^{\infty}\sum_{j=1}^{\infty} V_{i,j}^N + \frac{h_x}{2}\sum_{j=1}^{\infty}V_{0,j}^N + \frac{h_y}{2}\sum_{i=1}^{\infty}V_{i,0}^N + \frac{1}{4}h_xh_y V_{0,0}^N \\
&= \frac{1}{4\pi^2}\int_{-\frac{\pi}{h_y}}^{\frac{\pi}{h_y}}\int_{-\frac{\pi}{h_x}}^{\frac{\pi}{h_x}}  h_xh_yX_N^{(l_1,\,l_2)}(\xi,\eta)\chi(h_x,\xi)\chi(h_y,\eta)\,\mathrm{d}\xi\,\mathrm{d}\eta,
\end{aligned}
\end{equation}
where $\chi(h_x,\xi),\ \chi(h_y,\eta)$ is defined in a distributional sense as
\begin{equation}\label{eq_2dchi}
\chi(h_x,\xi) = \sum_{j=-i_0+1}^{\infty} \mathrm{e}^{\mathrm{i}jh_x\xi}+\frac{1}{2}\mathrm{e}^{-\mathrm{i}i_0h_x\xi},\qquad \chi(h_y,\eta) = \sum_{j=-j_0+1}^{\infty} \mathrm{e}^{\mathrm{i}jh_y\eta}+\frac{1}{2}\mathrm{e}^{-\mathrm{i}j_0h_y\eta}.
\end{equation}
Note that $\chi$ only appears multiplied by the smooth, fast decaying function $X_N$ and in integral form, such that this is well-defined.

We recall from \eqref{eq_2dDeltaL} the sparse combination estimator
\begin{equation}
\Delta P_{(l_1,\,l_2)}= P_{(l_1,\,l_2)}-P_{(l_1,\,l_2-1)}-P_{(l_1-1,\,l_2)}+P_{(l_1-1,\,l_2-1)}.
\end{equation}

We assume $h_x,\ h_y<1$. Even though $P_{(l_1,\,l_2)},\ P_{(l_1,\,l_2-1)},\ P_{(l_1-1,\,l_2)},\ P_{(l_1-1,\,l_2-1)}$ have different Fourier domains, we define $\Omega_{\text{low}}$ shared by all of them, for $0<p<1/2$,
\[
\Omega_{\text{low}} = \big\{(\xi,\eta):\vert\xi\vert\leq h_x^{-p}\text{ and } \vert\eta\vert\leq h_y^{-p}\big\}.
\]
Then we define $I_{(l_1,\,l_2)}$ as
\begin{align*}
I_{(l_1,\,l_2)} = \frac{1}{4\pi^2} \iint_{\Omega_{(l_1,\,l_2)}\setminus\Omega_{\text{low}}}\!\!\!\!\! h_xh_yX_N^{(l_1,\,l_2)}(\xi,\eta)\chi(h_x,\xi)\chi(h_y,\eta)\,\mathrm{d}\xi\,\mathrm{d}\eta,
\end{align*}
where $\Omega_{(l_1,\,l_2)}%$ is the Fourier domain of $V^N$ for space index $(l_1,\,l_2)$, namely $
=[-\pi/h_x,\pi/h_x]\times [-\pi/h_y,\pi/h_y]$. Then
\[
P_{(l_1,\,l_2)} = \frac{1}{4\pi^2} \iint_{\Omega_{\text{low}}}\!\!\!\!\! h_xh_yX_N^{(l_1,\,l_2)}(\xi,\eta)\chi(h_x,\xi)\chi(h_y,\eta)\,\mathrm{d}\xi\,\mathrm{d}\eta + I_{(l_1,\,l_2)},
\]
and
\begin{align*}
\Delta P_{(l_1,\,l_2)}&= P_{(l_1,\,l_2)}-P_{(l_1,\,l_2-1)}-P_{(l_1-1,\,l_2)}+P_{(l_1-1,\,l_2-1)}\\
= \; \frac{1}{4\pi^2}&\iint_{\Omega_{\text{low}}}\bigg( h_xh_y X_N^{(l_1,\,l_2)}\chi(h_x,\xi)\chi(h_y,\eta) - 2h_xh_y X_N^{(l_1-1,\,l_2)}\chi(2h_x,\xi)\chi(h_y,\eta)\\
 -\;  2h_xh_y &\!\; X_N^{(l_1,\,l_2-1)}\chi(h_x,\xi)\chi(2h_y,\eta) + 4h_xh_yX_{N}^{(l_1-1,\,l_2-1)}\chi(2h_x,\xi)\chi(2h_y,\eta)\bigg)\,\mathrm{d}\xi\,\mathrm{d}\eta\\
 + \; I_{(l_1,\,l_2)}&\! - I_{(l_1-1,\,l_2)} - I_{(l_1,\,l_2-1)} + I_{(l_1-1,\,l_2-1)}.
\end{align*}

Denote
\begin{equation}\label{eq_2dG}
\begin{aligned}
G_{(l_1,\,l_2)}(\xi,\eta)&\coloneqq  h_xh_y X_N^{(l_1,\,l_2)}\chi(h_x,\xi)\chi(h_y,\eta) - 2h_xh_y X_N^{(l_1-1,\,l_2)}\chi(2h_x,\xi)\chi(h_y,\eta)\\
- 2h_xh_y &X_N^{(l_1,\,l_2-1)}\chi(h_x,\xi)\chi(2h_y,\eta) + 4h_xh_yX_{N}^{(l_1-1,\,l_2-1)}\chi(2h_x,\xi)\chi(2h_y,\eta).
\end{aligned}
\end{equation}
Then we have
\begin{equation}\label{eq_2dG2}
\begin{aligned}
G_{(l_1,\,l_2)}(\xi,\eta)
&= 4h_xh_y\chi(2h_x,\xi)\chi(2h_y,\eta)\big(X_N^{l_1,\,l_2} - X_{N}^{l_1,\,l_2-1} \!\!- X_N^{l_1-1,\,l_2} + X_{N}^{l_1-1,\,l_2-1}\big)\\
 +\big(2h_xh_y&\chi(2h_x,\xi)\chi(h_y,\eta) - 4h_xh_y\chi(2h_x,\xi)\chi(2h_y,\eta)\big)\big(X_N^{l_1,\,l_2} - X_N^{l_1-1,\,l_2}\big)\\
 +\big(2h_xh_y&\chi(h_x,\xi)\chi(2h_y,\eta) - 4h_xh_y\chi(2h_x,\xi)\chi(2h_y,\eta)\big)\big(X_N^{l_1,\,l_2} - X_N^{l_1,\,l_2-1}\big)\\
 +\big(h_xh_y\chi&(h_x,\xi)\chi(h_y,\eta) - 2h_xh_y\chi(h_x,\xi)\chi(2h_y,\eta) - 2h_xh_y\chi(2h_x,\xi)\chi(h_y,\eta)\\
 + 4h_xh_y\chi&(2h_x,\xi)\chi(2h_y,\eta)\big)X_N^{l_1,\,l_2},
\end{aligned}
\end{equation}
and 
\[
\Delta P_{(l_1,\,l_2)} = \frac{1}{4\pi^2}\iint_{\Omega_{\text{low}}}G_{(l_1,\,l_2)}(\xi,\eta)\,\mathrm{d}\xi\mathrm{d}\eta + I_{(l_1,\,l_2)} - 
I_{(l_1-1,\,l_2)} - I_{(l_1,\,l_2-1)} + I_{(l_1-1,\,l_2-1)}.
\]
%We will show that 
%\[
%\mathbb{E}\bigg[\bigg|\frac{1}{4\pi^2}\iint_{\Omega_{\text{low}}}G_{(l_1,\,l_2)}(\xi,\eta)\,\mathrm{d}\xi\mathrm{d}\eta\bigg|^2\bigg] = O(h_x^4h_y^4),
%\]
%and
%\[
%\mathbb{E}\bigg[\Big|I_{(l_1,\,l_2)} - 
%I_{(l_1-1,\,l_2)} - I_{(l_1,\,l_2-1)} + I_{(l_1-1,\,l_2-1)}\Big|^2\bigg] = o(h_x^r) + o(h_y^r),\ \forall r>0.
%\]

\subsection{Proof of Theorem \ref{thm_sgridserror}}\label{sec_proofoflemmaSgrids}

In this section, we give a proof of Theorem \ref{thm_sgridserror}. The splitting into different wave number regions is motivated by the analysis of
the heat equation in \cite{ref3}.
We further separate $\Omega_{(l_1,\,l_2)}\setminus\Omega_{\text{low}} = \Omega_{\text{mid}}\cup\Omega_{\text{high}}$. Hence $\Omega_{(l_1,\,l_2)}$ is divided into three regions,
\begin{align*}
\Omega_{\text{low}} &= \big\{(\xi,\eta):\vert\xi\vert\leq h_x^{-p}\text{ and } \vert\eta\vert\leq h_y^{-p}\big\}, \\
\Omega_{\text{mid}} &= \big\{(\xi,\eta):\vert\xi\vert\leq h_x^{-p}\text{ and } h_y^{-p}<\vert\eta\vert\leq \pi/h_y\big\} \; \cup \\
& \qquad\qquad\;\,  \big\{(\xi,\eta):h_x^{-p}<\vert\xi\vert\leq \pi/h_x\text{ and } \vert\eta\vert\leq h_y^{-p}\big\}, \\
\Omega_{\text{high}} &= \big\{(\xi,\eta): h_x^{-p}<\vert\xi\vert\leq \pi/h_x\text{ and } h_y^{-p}<\vert\eta\vert\leq \pi/h_y\big\}.
\end{align*}

%Before proving Theorem \ref{thm_sgridserror}, 
We state three lemmas without proof, before giving the proof of Theorem \ref{thm_sgridserror}.

\begin{lemma}[Low wave region]\label{lem_2dSGridlowwave}
For $G_{(l_1,\,l_2)}(\xi,\eta)$ introduced in \eqref{eq_2dG}, there exists a constant~$C>0$, such that for any $l_1,l_2\in\mathbb{N}$,
\[
\iint_{\Omega_{\text{low}}}G_{(l_1,\,l_2)}(\xi,\eta)\,\mathrm{d}\xi\mathrm{d}\eta = h_x^2h_y^2\cdot R(T),
\]
where $R(T)$ is a random variable %with bounded first and second moments 
satisfying
$%\[
%\Big|\mathbb{E}\big[R(T)\big]\Big|\leq C,\; \mathbb{E}\Big[\big|R(T)\big|^2\Big]\leq C.
|\mathbb{E}[R(T)]|\leq C,\; \mathbb{E}[|R(T)|^2]\leq C.
$%\]
\end{lemma}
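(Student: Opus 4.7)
The plan is to exploit the four-term decomposition \eqref{eq_2dG2} of $G_{(l_1,l_2)}$ together with Taylor expansions of the Fourier symbol $X_N^{(l_1,l_2)}$ and of the trapezoidal kernels $\chi$ in the two small parameters $h_x$ and $h_y$. On $\Omega_{\text{low}}$ one has $|h_x\xi|\leq h_x^{1-p}$ and $|h_y\eta|\leq h_y^{1-p}$, so all such expansions converge; from \eqref{eq_Xn2}, central differencing gives symbol expansions
\begin{equation*}
a_x = -\tfrac{1}{2}\xi^2 + h_x^2\,\widetilde a(\xi) + O(h_x^4\xi^6), \quad
b_x = -\tfrac{1}{2}\xi^2 + h_x^2\,\widetilde b(\xi) + O(h_x^4\xi^6), \quad
c_x = \xi + h_x^2\,\widetilde c(\xi) + O(h_x^4\xi^5),
\end{equation*}
analogous expansions in $h_y$, and $d = -\xi\eta + h_x^2(\cdot) + h_y^2(\cdot) + O(h_x^4) + O(h_y^4)$, all containing only even powers of $h_x$, $h_y$. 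Propagating these through the one-step recursion \eqref{eq_Xn2} for $n=0,\ldots,N-1$ yields a path-wise expansion
\begin{equation*}
X_N^{(l_1,l_2)}(\xi,\eta) = X_N^{(\infty,\infty)}(\xi,\eta) + h_x^2\, A(\xi,\eta) + h_y^2\, B(\xi,\eta) + h_x^2 h_y^2\, C(\xi,\eta) + O(h_x^4) + O(h_y^4),
\end{equation*}
in which $A, B, C$ are $(\xi,\eta)$-polynomial multiples of the $h\to 0$ symbol $X_N^{(\infty,\infty)}$, while Euler--Maclaurin yields, in the distributional pairing against $X_N$, an expansion $h\,\chi(h,\cdot) = \chi_0 + h^2\chi_1 + O(h^4)$ reflecting the $O(h^2)$ accuracy of the trapezoidal rule.

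Applying these to the four lines of \eqref{eq_2dG2} in turn: the first line is $4h_xh_y\chi(2h_x,\xi)\chi(2h_y,\eta)$ times the full tensor difference in $(l_1,l_2)$, which annihilates every monomial in the $X_N^{(l_1,l_2)}$ expansion not containing both $h_x^2$ and $h_y^2$, leaving a leading $9h_x^2 h_y^2 C(\xi,\eta)$ against a prefactor of order unity (since each $h\,\chi(h,\cdot) = O(1)$ distributionally). The second line factorises as $2h_x\chi(2h_x,\xi)\cdot\bigl[\chi(h_y,\eta)-2\chi(2h_y,\eta)\bigr]\cdot h_y\cdot\bigl(X_N^{(l_1,l_2)}-X_N^{(l_1-1,l_2)}\bigr)$; the Richardson-type bracket is $O(h_y)$ by Euler--Maclaurin (the $\chi_0/h_y$ leading terms cancelling), the $l_1$-difference is $O(h_x^2)$ by the expansion, and the remaining $2h_x\chi(2h_x,\xi)\cdot h_y$ is distributionally bounded, giving overall $O(h_x^2 h_y^2)$; the third line is the mirror image. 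The fourth line factorises as the tensor product of two Richardson $\chi$-differences acting on $X_N^{(l_1,l_2)}$, each of order $h$, again yielding $O(h_x^2 h_y^2)$.

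Setting $R(T)\coloneqq (h_xh_y)^{-2}\iint_{\Omega_{\text{low}}} G_{(l_1,l_2)}\,\mathrm{d}\xi\,\mathrm{d}\eta$, the two claimed moment bounds reduce to Schwartz-type integrability against the decay of $X_N^{(\infty,\infty)}$. For the first moment one uses $\mathbb{E}\bigl[X_N^{(\infty,\infty)}\bigr] = \exp(-\tfrac{1}{2}\xi^2 T - \tfrac{1}{2}\eta^2 T)$ (after averaging the Brownian increments); for the second moment one uses the deterministic identity $|X_N^{(\infty,\infty)}|^2 = \exp\bigl(-(1-\rho_x)\xi^2 T - (1-\rho_y)\eta^2 T\bigr)$ together with the explicit joint characteristic function of $(W_T^x,W_T^y)$ to evaluate the cross-moment $\mathbb{E}\bigl[X_N^{(\infty,\infty)}(\xi,\eta)\overline{X_N^{(\infty,\infty)}(\xi',\eta')}\bigr]$, which under \eqref{eq_stablerhos} is a negative-definite Gaussian in $(\xi,\eta,\xi',\eta')$ that dominates any polynomial prefactor. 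The main technical obstacle is the uniform control, under the CFL constraint $k\le\lambda\min(h_x^2,h_y^2)$, of the Taylor remainders through $N$ time-step iterations; this is done by an inductive argument analogous to Section~4 of \cite{reisinger2018analysis}, showing that the remainder terms inherit the Gaussian decay of $X_N^{(\infty,\infty)}$ and are integrable over $\Omega_{\text{low}}$ independently of $l_1$, $l_2$ and $N$.
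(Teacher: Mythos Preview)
Your proposal is correct and follows essentially the same strategy as the paper: both use the four-term decomposition \eqref{eq_2dG2}, an error expansion of $X_N^{(l_1,l_2)}$ in $h_x,h_y$ to show that the $X_N$-differences in the first three lines carry the required powers of $h_x^2$ and $h_y^2$, and a trapezoidal/Euler--Maclaurin argument on the $\chi$-kernels to extract the complementary powers from the $\chi$-differences in lines two through four, closing with Gaussian decay of the Fourier symbol to control moments.

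The only noteworthy difference is in packaging. The paper writes the $X_N$-differences multiplicatively as $X(T)\cdot R_i$ with $R_i$ controlled via Lemma~4.1 of \cite{reisinger2018stability}, and interprets each $h\,\chi(h,\cdot)$-integral directly as a trapezoidal rule applied to $\int_0^\infty\!\!\int_0^\infty(\text{inverse Fourier transform})\,\mathrm{d}x\,\mathrm{d}y$, so that $\chi$-differences are recognised immediately as differences of quadratures of the same integral and hence $O(h^2)$. Your additive expansion $X_N = X_N^{(\infty,\infty)}+h_x^2A+h_y^2B+h_x^2h_y^2C+\cdots$ and Euler--Maclaurin expansion of $h\,\chi(h,\cdot)$ do the same job more explicitly; your coefficient $9\,h_x^2h_y^2\,C$ for the tensor difference is exactly right. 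One small slip: in the second line you describe the residual factor ``$2h_x\chi(2h_x,\xi)\cdot h_y$'' as distributionally bounded, but in your own counting the explicit $h_y$ must combine with the $O(h_y)$ bracket to yield $O(h_y^2)$; the final order $O(h_x^2h_y^2)$ is nonetheless correct.
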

\begin{proof}
See Appendix \ref{app_lem3.1}.
\end{proof}

\begin{lemma}[Middle wave region]\label{lem_2dSGridmidwave}
For the middle wave region, we have,
\[
 \mathbb{E}\Bigg[\,\bigg|\iint_{\Omega_{\text{mid}}} \!\!\!\!\! h_xh_yX_N(\xi,\eta)\chi(h_x,\xi)\chi(h_y,\eta)\,\mathrm{d}\xi\,\mathrm{d}\eta\bigg|^2\,\Bigg] = 
 o(h_x^r) + o(h_y^r),  \qquad \forall r>0.
\]
%with $X_N$ given by \eqref{eq_Xn2} and for all $r>0$.
\end{lemma}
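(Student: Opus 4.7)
The strategy is to establish a pointwise-in-$(\xi,\eta)$ bound on $\mathbb{E}[|X_N|^2]$ that decays super-polynomially in $1/h$ throughout $\Omega_{\text{mid}}$, then integrate. Pull the $L^2$ norm inside the integral by Minkowski's integral inequality:
\[
\sqrt{\mathbb{E}[|J_{\text{mid}}|^2]} \le \iint_{\Omega_{\text{mid}}} h_xh_y\,|\chi(h_x,\xi)||\chi(h_y,\eta)|\sqrt{\mathbb{E}[|X_N(\xi,\eta)|^2]}\,\mathrm{d}\xi\,\mathrm{d}\eta,
\]
so it suffices to bound the square root pointwise and then integrate against $|\chi||\chi|$ times $h_xh_y$.

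\textbf{Step 1 (Pointwise moment of $X_N$).} By \eqref{eq_Xn2} and independence of the Brownian increments across time steps,
\[
\mathbb{E}[|X_N|^2]=\prod_{n=0}^{N-1}\mathbb{E}[|C_n|^2].
\]
Writing the numerator of $C_n$ as $1+A_1+A_2$ with $A_1$ linear and $A_2$ quadratic in $(Z_{n,x},\widetilde Z_{n,y})$, all odd joint Gaussian moments vanish, giving $\mathbb{E}[A_1]=0$, $\mathbb{E}[A_1\bar A_2]=0$, $\mathbb{E}[|A_2|^2]=O(k^2)$, and $\mathbb{E}[A_2]=k\,d\sqrt{\rho_x\rho_y}\rho_{xy}$. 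The crucial algebraic cancellation is that $d=-c_xc_y$ exactly cancels the $\rho_{xy}$-cross-term in $\mathbb{E}[|A_1|^2]$, leaving
\[
\mathbb{E}[|C_n|^2]=\frac{1+k(\rho_x c_x^2+\rho_y c_y^2)+O(k^2)}{(1-(a_x+a_y)k)^2}.
\]
Substituting $-a_x=2\sin^2(\xi h_x/2)/h_x^2$, $c_x^2=4\sin^2(\xi h_x/2)\cos^2(\xi h_x/2)/h_x^2$, and using $\rho_x\cos^2(\xi h_x/2)-1\le -(1-\rho_x)<0$ (similarly in $\eta$), expansion in $k$ yields
\[
\mathbb{E}[|C_n|^2]\le 1-C\,k\Big[\tfrac{\sin^2(\xi h_x/2)}{h_x^2}+\tfrac{\sin^2(\eta h_y/2)}{h_y^2}\Big]+O(k^2)
\]
with $C=4\min(1-\rho_x,1-\rho_y)>0$. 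For $k\le\lambda\min(h_x^2,h_y^2)$ and $h$ small, the $O(k^2)$ can be absorbed, and iterating with $Nk=T$ gives
\[
\mathbb{E}[|X_N(\xi,\eta)|^2]\le \exp\!\Big(-C'T\big[\tfrac{\sin^2(\xi h_x/2)}{h_x^2}+\tfrac{\sin^2(\eta h_y/2)}{h_y^2}\big]\Big).
\]

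\textbf{Step 2 (Super-polynomial decay in $\Omega_{\text{mid}}$).} In $\Omega_{\text{mid}}$ at least one of $|\xi|>h_x^{-p}$, $|\eta|>h_y^{-p}$ holds, and since $|\sin u|\ge 2|u|/\pi$ on $[0,\pi/2]$ with $|\xi h_x/2|\le\pi/2$, we have $\sin^2(\xi h_x/2)/h_x^2\ge \xi^2/\pi^2$ (analogously in $\eta$). Hence
\[
\sqrt{\mathbb{E}[|X_N(\xi,\eta)|^2]}\le \exp\!\Big(-\tfrac{C'T}{2\pi^2}\,\min(h_x^{-2p},h_y^{-2p})\Big),\quad (\xi,\eta)\in\Omega_{\text{mid}},
\]
which is $o(h_x^r)+o(h_y^r)$ for every $r>0$.

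\textbf{Step 3 (Assembly).} The remaining $\chi$-integral grows at most polynomially in $1/h$: on each subregion of $\Omega_{\text{mid}}$, $h|\chi(h,\cdot)|=\tfrac12 h|\cot(h\cdot/2)|$ is bounded away from the origin, so the product $h_xh_y|\chi||\chi|$ is $O(\mathrm{poly}(h_x^{-1},h_y^{-1}))$ after integration. Multiplying by the super-polynomially small factor from Step~2 yields $\sqrt{\mathbb{E}[|J_{\text{mid}}|^2]}=o(h_x^r)+o(h_y^r)$ for all $r>0$.

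\textbf{Main obstacle.} The delicate point is the order-one pole of $\chi(h_x,\xi)$ at $\xi=0$, which lies inside the ``low-frequency'' direction of $\Omega_{\text{mid}}$ (part (a), where $|\xi|\le h_x^{-p}$ and $|\eta|>h_y^{-p}$). The naive pointwise bound $|h_x\chi(h_x,\xi)|\lesssim 1/|\xi|$ is not integrable at the origin. I would resolve this by decomposing $\chi(h_x,\xi)=i/(h_x\xi)+\psi(h_x,\xi)$ where $\psi$ is bounded on $[-h_x^{-p},h_x^{-p}]$, then handling the pole term against the smooth function $X_N$ in the principal-value sense (odd symmetry kills the leading contribution and leaves a derivative term with the same exponential decay from Step~1), and bounding the $\psi$-contribution by a directly integrable polynomial-in-$1/h$ factor. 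An alternative is to invoke Parseval's identity to rewrite $J_{\text{mid}}$ as a physical-space inner product of $(V_{i,j}^N)$ with a truncated weight, and exploit the Gaussian concentration of $V_{i,j}^N$ to control the truncation directly.
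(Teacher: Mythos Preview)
Your route is genuinely different from the paper's. The paper does \emph{not} use a pointwise moment bound on $X_N$ over $\Omega_{\text{mid}}$; instead it writes $X_N=X(T)+(X_N-X(T))$, handles the exact-solution piece by direct computation from the closed form of $X(T)$, and for the error introduces a \emph{separable} intermediate $\widetilde X_N=\prod_n\widetilde C_n^x\prod_n\widetilde C_n^y$. The factorisation lets the paper integrate out $\xi$ first and reduce everything to one-dimensional estimates already proved elsewhere. Your Steps~1--2, by contrast, are exactly the argument the paper reserves for the \emph{high}-wave region (Lemma~\ref{lem_2dSGridhighwave}); you are pushing that technique into $\Omega_{\text{mid}}$. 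The advantage of your route is self-containment (no need for the closed form of $X(T)$ or for citing 1D results); the advantage of the paper's route is that it never confronts the $\chi$-pole pointwise.

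That pole is the real issue. In $\Omega_{\text{high}}$ both $|h_x\chi|$ and $|h_y\chi|$ are bounded, so Minkowski plus the moment bound suffices; in $\Omega_{\text{mid}}^1$ the $\xi$-direction contains the origin and $h_x\chi(h_x,\xi)\sim i\,e^{-ix_0\xi}/\xi$, which is not integrable in absolute value. This is precisely why the paper switches strategy there. Your PV/oddness fix is correct in spirit, but to close it you would need $\sqrt{\mathbb{E}[|\partial_\xi X_N(\xi,\eta)|^2]}$ to carry the same exponential decay in $\eta$ as $X_N$ itself; this is plausible (differentiate $X_N=X(T)e^{\sum e_n}$) but it is an additional lemma, not a one-liner. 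A lighter repair within your framework: split the $\xi$-range into $\{1\le|\xi|\le h_x^{-p}\}$, where $|h_x\chi|\lesssim 1/|\xi|$ is integrable against your Gaussian factor and Minkowski goes through directly, and the fixed strip $\{|\xi|<1\}$, where a single PV estimate against the smooth $X_N$ costs only $\sup_{|\xi|<1}\sqrt{\mathbb{E}[|X_N|^2+|\partial_\xi X_N|^2]}$, which still has the $\eta$-decay. One further remark: the ``$O(k^2)$ can be absorbed'' step in your Step~1 is exactly where the parameter constraints \eqref{eq_stablerhos} enter (cf.\ the $\beta$ in the paper's proof of Lemma~\ref{lem_2dSGridhighwave}); without them the quadratic-in-$k$ terms are of the same order as the linear ones and need not be dominated.
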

\begin{proof}
See Appendix \ref{app_lem3.2}.
\end{proof}

\begin{lemma}[High wave region]\label{lem_2dSGridhighwave}
For the high wave region, we have
\[
 \mathbb{E}\Bigg[\,\bigg|\iint_{\Omega_{\text{high}}} \!\!\!\!\! h_xh_yX_N(\xi,\eta)\chi(h_x,\xi)\chi(h_y,\eta)\,\mathrm{d}\xi\,\mathrm{d}\eta\bigg|^2\,\Bigg] = o(h_x^rh_y^r),
 \qquad \forall r>0.
\]
%with $X_N$ given by \eqref{eq_Xn2} and for all $r>0$.
\end{lemma}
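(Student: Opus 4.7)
The plan is to first reduce the mean-square of the integral to a pointwise bound on $\mathbb{E}[|X_N(\xi,\eta)|^2]$ via Cauchy--Schwarz and Fubini, and then to exploit the dissipation of the numerical scheme, which on $\Omega_{\text{high}}$ (where both $|\xi|\ge h_x^{-p}$ and $|\eta|\ge h_y^{-p}$) makes $\mathbb{E}[|X_N|^2]$ super-polynomially small simultaneously in $h_x$ and $h_y$. Denoting the integral in the statement by $I$, Cauchy--Schwarz in $L^2(\Omega_{\text{high}})$ and Fubini yield
\[
\mathbb{E}\big[|I|^2\big]\le |\Omega_{\text{high}}|\iint_{\Omega_{\text{high}}}\mathbb{E}\big[|X_N(\xi,\eta)|^2\big]\,h_x^2 h_y^2\,|\chi(h_x,\xi)\chi(h_y,\eta)|^2\,\mathrm{d}\xi\,\mathrm{d}\eta,
\]
with $|\Omega_{\text{high}}|\le 4\pi^2/(h_xh_y)$. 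An Abel-regularised geometric series evaluation of $\chi$, combined with $|\sin(h_x\xi/2)|\ge |h_x\xi|/\pi$ for $|h_x\xi|\le\pi$, gives the pointwise bound $|h_x\chi(h_x,\xi)|\le \pi/(2|\xi|)+\tfrac12 h_x$ away from the poles, and analogously for $\chi(h_y,\eta)$, so that $h_x^2 h_y^2|\chi\chi|^2\le C/(\xi^2\eta^2)$ on $\Omega_{\text{high}}$.

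For the pointwise bound on $\mathbb{E}[|X_N|^2]$, I would use that \eqref{eq_Xn2} gives $X_{n+1}=C_n X_n$ with $C_n$ depending only on the fresh increments $Z_{n,x},\widetilde Z_{n,y}$ and hence independent of $X_n$; together with $X_0\equiv 1$ this yields the product formula $\mathbb{E}[|X_N|^2]=(\mathbb{E}[|C|^2])^N$. Setting $\alpha=4k\sin^2(\xi h_x/2)/h_x^2$ and $\beta=4k\sin^2(\eta h_y/2)/h_y^2$ and expanding $\mathbb{E}[|C|^2]$ using the moment identities $\mathbb{E}[(Z_x^2-1)^2]=2$, $\mathbb{E}[Z_x^2\widetilde Z_y^2]=1+2\rho_{xy}^2$, $\mathbb{E}[Z_x^3\widetilde Z_y]=3\rho_{xy}$, together with the AM--GM bound $2|c_xc_y|\sqrt{\rho_x\rho_y}\le \rho_x c_x^2+\rho_y c_y^2$ and the stability assumptions \eqref{eq_stablerhos}, a Von-Neumann-type analysis along the lines of \cite[Theorem~2.1]{reisinger2018stability} should yield a strict contraction
\[
\mathbb{E}\big[|C(\xi,\eta)|^2\big]\le 1-\delta'(\alpha+\beta)
\]
for some $\delta'>0$, where the CFL condition $k\le\lambda\min(h_x^2,h_y^2)$ keeps $\alpha,\beta=O(1)$ uniformly so that the $(\alpha+\beta)^2/4$ term in the denominator can be absorbed. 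Using once more $\sin^2(\xi h_x/2)\ge (\xi h_x)^2/\pi^2$ and $N=T/k$, one obtains the decisive pointwise tail bound
\[
\mathbb{E}\big[|X_N(\xi,\eta)|^2\big]\le \exp\!\big(-\delta' N(\alpha+\beta)\big)\le \exp\!\big(-\delta''T(\xi^2+\eta^2)\big).
\]

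Substituting into the Cauchy--Schwarz estimate and separating variables,
\[
\mathbb{E}\big[|I|^2\big]\le \frac{C}{h_xh_y}\iint_{\Omega_{\text{high}}}\frac{\mathrm{e}^{-\delta''T(\xi^2+\eta^2)}}{\xi^2\eta^2}\,\mathrm{d}\xi\,\mathrm{d}\eta\le \frac{C'\,h_x^p h_y^p}{h_xh_y}\,\mathrm{e}^{-\delta''T(h_x^{-2p}+h_y^{-2p})},
\]
where each one-dimensional tail is bounded via $\int_{h^{-p}}^{\infty}\mathrm{e}^{-\delta''T\xi^2}/\xi^2\,\mathrm{d}\xi\le h^p\mathrm{e}^{-\delta''Th^{-2p}}$. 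Because $\mathrm{e}^{-\delta''Th^{-2p}}$ is super-polynomially small in $h$ and dominates any polynomial prefactor, we conclude $\mathbb{E}[|I|^2]=o(h_x^r h_y^r)$ for every $r>0$, as claimed. The main obstacle is the strict per-step contraction $\mathbb{E}[|C|^2]\le 1-\delta'(\alpha+\beta)$: the mean-square stability theorem in \cite{reisinger2018stability} as stated provides only $\mathbb{E}[|C|^2]\le 1$, so one has to track the slack in that Von Neumann argument under \eqref{eq_stablerhos} to extract a positive margin proportional to the dissipation $\alpha+\beta$, which is precisely why these specific coefficient constraints appear in the footnote.
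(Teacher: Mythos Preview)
Your proposal is correct and follows essentially the same route as the paper: Cauchy--Schwarz to reduce to a pointwise bound on $\mathbb{E}[|X_N|^2]$, then the product formula $\mathbb{E}[|X_N|^2]=(\mathbb{E}[|C|^2])^N$, a strict per-step contraction under \eqref{eq_stablerhos}, and finally Gaussian tail integration over $\Omega_{\text{high}}$. The paper carries out precisely the explicit Von Neumann computation of $\mathbb{E}[|C_n|^2]$ that you flag as the main obstacle, arriving at $\mathbb{E}[|X_N|^2]\le|X_0|^2\exp(-\kappa\xi^2-\kappa\eta^2)$ with $\kappa$ built from the constant $\beta=\min\{1-\rho_x,\,1-\rho_y,\,1-2\rho_x^2(1+2|\rho_{xy}|),\,\ldots\}$; your anticipation that the CFL bound absorbs the denominator $(1+(\alpha+\beta)/2)^2$ is exactly how the paper proceeds. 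The only presentational differences are that you bound $h_x\chi(h_x,\xi)$ explicitly via Abel summation whereas the paper invokes its finite $2$-norm to drop it, and that the paper first routes through $X(T)-X_N$ before bounding $X_N$ directly, which is a detour you sensibly skip.
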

\begin{proof}
See Appendix \ref{app_lem3.3}.
\end{proof}

%Now we use these lemmas to prove Theorem \ref{thm_sgridserror}.

\begin{proof}[Theorem \ref{thm_sgridserror}]
We have derived in Section \ref{sec_sgridsestimator} that 
\[
\Delta P_{(l_1,\,l_2)} = \frac{1}{4\pi^2}\iint_{\Omega_{\text{low}}}G_{(l_1,\,l_2)}(\xi,\eta)\,\mathrm{d}\xi\mathrm{d}\eta + I_{(l_1,\,l_2)} - I_{(l_1-1,\,l_2)} - I_{(l_1,\,l_2-1)} + I_{(l_1-1,\,l_2-1)}.
\]

Lemma~\ref{lem_2dSGridmidwave} and Lemma \ref{lem_2dSGridhighwave} give that
\begin{align*}
\mathbb{E}\Big[\big|I_{(l_1,\,l_2)}\big|^2\Big] &\leq 2\, 
 \mathbb{E}\Bigg[\,\bigg|\iint_{\Omega_{\text{mid}}} \!\!\!\!\! h_xh_yX_N(\xi,\eta)\chi(h_x,\xi)\chi(h_y,\eta)\,\mathrm{d}\xi\,\mathrm{d}\eta\bigg|^2\,\Bigg] \\
& + 2\,\mathbb{E}\Bigg[\,\bigg|\iint_{\Omega_{\text{high}}} \!\!\!\!\! h_xh_yX_N(\xi,\eta)\chi(h_x,\xi)\chi(h_y,\eta)\,\mathrm{d}\xi\,\mathrm{d}\eta\bigg|^2\,\Bigg]= o(h_x^r) + o(h_y^r)
\end{align*}
for all $r>0$.
Similar estimates hold for $I_{(l_1-1,\,l_2)}$, $I_{(l_1,\,l_2-1)}$, and $I_{(l_1-1,\,l_2-1)}$.
%\[
%\mathbb{E}\Big[\big|I_{(l_1-1,\,l_2)}\big|^2\Big] = o(h_x^r) + o(h_y^r),\quad \mathbb{E}\Big[\big|I_{(l_1,\,l_2-1)}\big|^2\Big] = o(h_x^r) + o(h_y^r), \quad \mathbb{E}\Big[\big|I_{(l_1-1,\,l_2-1)}\big|^2\Big] = o(h_x^r) + o(h_y^r).
%\]
Combining this with Lemma \ref{lem_2dSGridlowwave}, the two inequalities in \eqref{eqn:delta_order} follow.
%\[
%\Big|\mathbb{E}\left[\Delta P_{(l_1,\,l_2)}\right]\Big| = O(h_x^2h_y^2),\qquad
%\mathbb{E}\Big[\left|\Delta P_{(l_1,\,l_2)}\right|^2\Big]= O(h_x^4h_y^4).
%\]
\end{proof}

\subsection{Proof of Theorem \ref{thm_sgridsapprox}}\label{sec_proofofSgridsApprox}
\begin{proof}[Theorem \ref{thm_sgridsapprox}]
Since from Proposition \ref{prop_2dErrorofLoss}, 
\[
\lim_{l_1,l_2\rightarrow \infty}\sqrt{\mathbb{E}\left[\lvert P^N_{(l_1,\,l_2)} - P\rvert^2\right]} = O(k) = O(2^{-2l}),
\]
we have
\begin{align*}
\quad\sqrt{\mathbb{E}\left[\lvert P^N_{l} - P\rvert^2\right]} &=  \sqrt{\mathbb{E}\bigg[\Big|\sum_{l_1+l_2>l+1}\Delta P_{(l_1,l_2)}\Big|^2\bigg]} + O(2^{-2l}) \\
& \leq  \sum_{l_1+l_2>l+1}\sqrt{\mathbb{E}\Big[\big|\Delta P_{(l_1,l_2)}\big|^2\Big]} + O(2^{-2l})\\
& \le C\cdot\bigg[\sum_{l_1=0}^{l}\sum_{l_2=l+1-l_1}^\infty 2^{-2l_1-2l_2} + \sum_{l_1=l+1}^{\infty}\sum_{l_2 = 0}^\infty 2^{-2l_1-2l_2}\bigg] + O(2^{-2l})\\
& = C\Big(\frac{l}{3}2^{-2l} + \frac{7}{9}2^{-2l}\Big) + O(2^{-2l}) = O(l\,2^{-2l}).
\end{align*}
%{\color{red}\fbox{Changed one `$=$' to `$\le$'.}}
\end{proof}

\section{Numerical tests}\label{sec_2dNumerical}

In this section, we test the theoretical convergence results empirically.

%\subsection{Mean and variance of \texorpdfstring{$\Delta P^N_{(l_1,\,l_2)}$}{Ll1l2}}
\subsection{Mean and variance of hierarchical increments} %texorpdfstring{$\Delta P^N_{(l_1,\,l_2)}$}{Ll1l2}}

First, we verify numerically 
%tests for $\mathbb{E}\big[\Delta P_{(l_1,\,l_2)}^N\big]$ and $\mathrm{Var}\big[\Delta P_{(l_1,\,l_2)}^N\big]$ 
for $l_1,\,l_2>0$, $h_x = h_0\cdot2^{-l_1},\, h_y = h_0\cdot2^{-l_2}$, the result from Theorem~\ref{thm_sgridserror} that
\[
\Big|\mathbb{E}\left[\Delta P^N_{(l_1,\,l_2)}\right]\Big| = O(h_x^2h_y^2)\sim 2^{-2(l_1+l_2)},\qquad
\mathbb{E}\Big[\left|\Delta P^N_{(l_1,\,l_2)}\right|^2\Big]= O(h_x^4h_y^4)\sim 2^{-4(l_1+l_2)}.
\]

We choose parameters $h_0 = 1,\ T=1,\ x_0= y_0 = 2,\ \mu_x = \mu_y = 0.0809,\ \rho_x = \rho_y = 0.2,\  \rho_{xy} = 0.45$,
and truncate the domain to $(x,y)\in[-8,12]\times[-8,12]$.

Table~\ref{table_sgrids_level} shows $\log_2\big|\mathbb{E}[\Delta P^N_{(l_1,\,l_2)}]\big|$ and $\log_2\mathrm{Var}[\Delta P^N_{(l_1\,l_2)}]$ with fixed timestep $k = 4^{-3}$, and different levels of mesh refinement. We can see from the table that the mean decreases by around~2 going from level $l$ to $l+1$, and the variance decreases by approximately 4, consistent with the theoretical prediction. Figure~\ref{fig_sgrids_meanvarplot} depicts the corresponding contour plots. 

\begin{table}%[H]
\center
{\begin{tabular}{@{}c|c|cccc}
\backslashbox{$l_1$}{$l_2$} & \multicolumn{1}{c|}{0} & \multicolumn{1}{c}{1} & \multicolumn{1}{c}{2} & \multicolumn{1}{c}{3} & \multicolumn{1}{c}{4} \\
\hline
$0$ & -0.0819 & -7.28 & -9.04 & -11.00 & -12.98 \\
\hline
$1$ & -7.30 & -13.73 & -15.49 & -17.44 & -19.43 \\
$2$ & -9.05 & -15.48 & -17.24 & -19.19 & -21.17 \\
$3$ & -10.99 & -17.43 & -19.19 & -21.13 & -23.12 \\
$4$ & -12.98 & -19.42 & -21.17 & -23.13 & -25.11 \\
\end{tabular}}
\quad
{\begin{tabular}{@{}c|c|cccc}
\backslashbox{$l_1$}{$l_2$}& \multicolumn{1}{c|}{0}  & \multicolumn{1}{c}{1} & \multicolumn{1}{c}{2} & \multicolumn{1}{c}{3} & \multicolumn{1}{c}{4} \\
\hline
$0$ & -8.34 & -13.90 & -17.36 & -21.20 & -25.15 \\
\hline
$1$ & -13.91 & -25.29 & -29.05 & -33.03 & -36.99 \\
$2$ & -17.35 & -28.34 & -32.26 & -36.19 & -40.24 \\
$3$ & -21.19 & -32.30 & -36.02 & -39.95 & -43.98 \\
$4$ & -25.14 & -36.16 & -39.97 & -43.96 & -47.85 \\
\end{tabular}}
\caption{$\log_2\big|\mathbb{E}[\Delta P^N_{(l_1,\,l_2)}]\big|$ and $\log_2\mathrm{Var}[\Delta P^N_{(l_1\,l_2)}]$ with $h_x = 2^{-l_1},\,h_y = 2^{-l_2}$, $k = 4^{-3}$.}
\label{table_sgrids_level}
\end{table}

\begin{figure}%[H]
\centering
\subfloat%[$\log_2\big|\mathbb{E}\Delta P^N_{(l_1\,l_2)} \big|,\quad l_1,l_2>0$]
{
\includegraphics[width=0.48\columnwidth, height=0.48\columnwidth, trim={0 0 3.cm 0},clip]{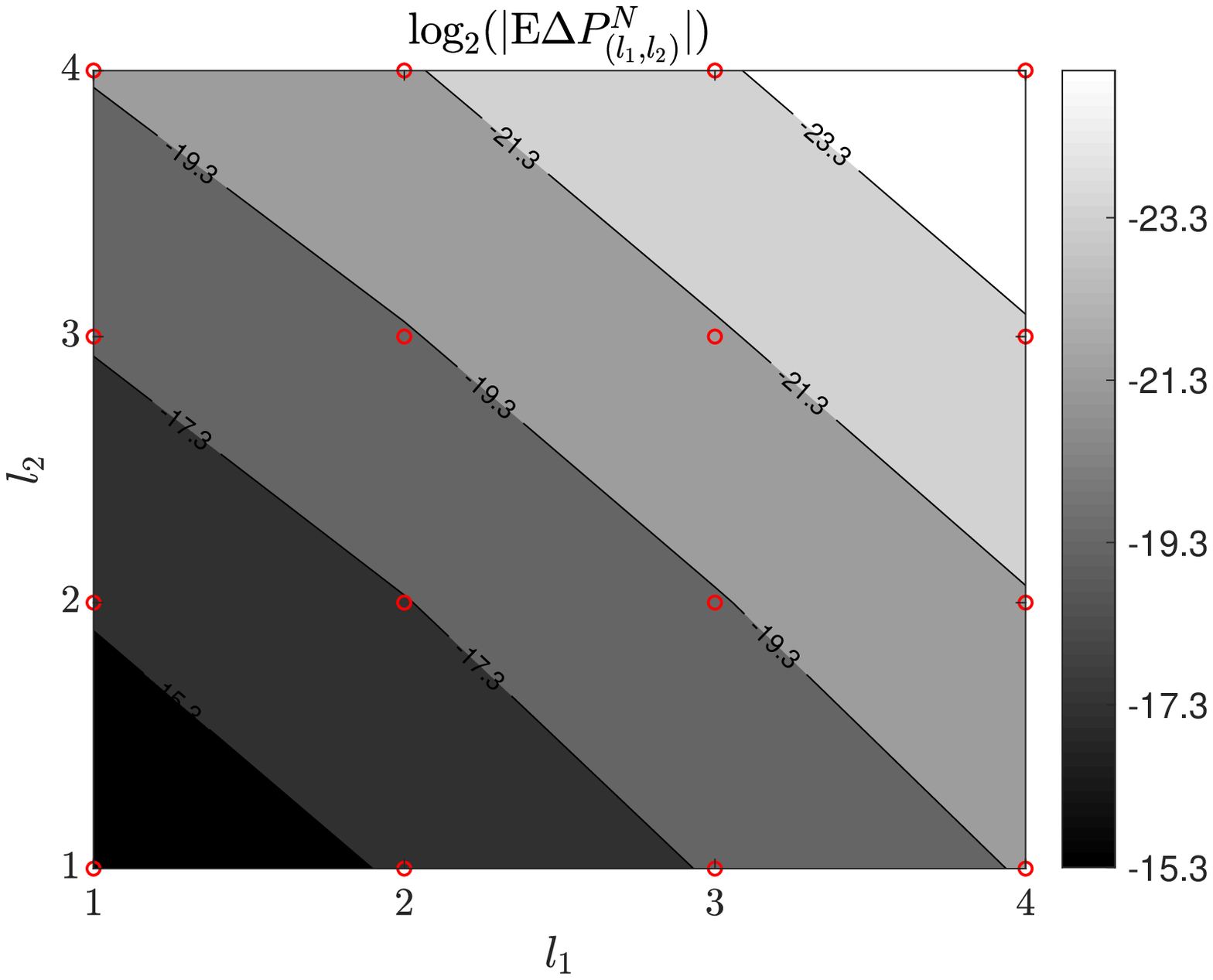}
\label{fig_sgrids_meanplot}
}
\subfloat%[$\log_2\mathrm{Var}(\Delta P^N_{(l_1,\,l_2)}),\quad l_1,l_2>0$]
{
\includegraphics[width=0.48\columnwidth, height=0.48\columnwidth, trim={0 0 3.cm 0},clip]{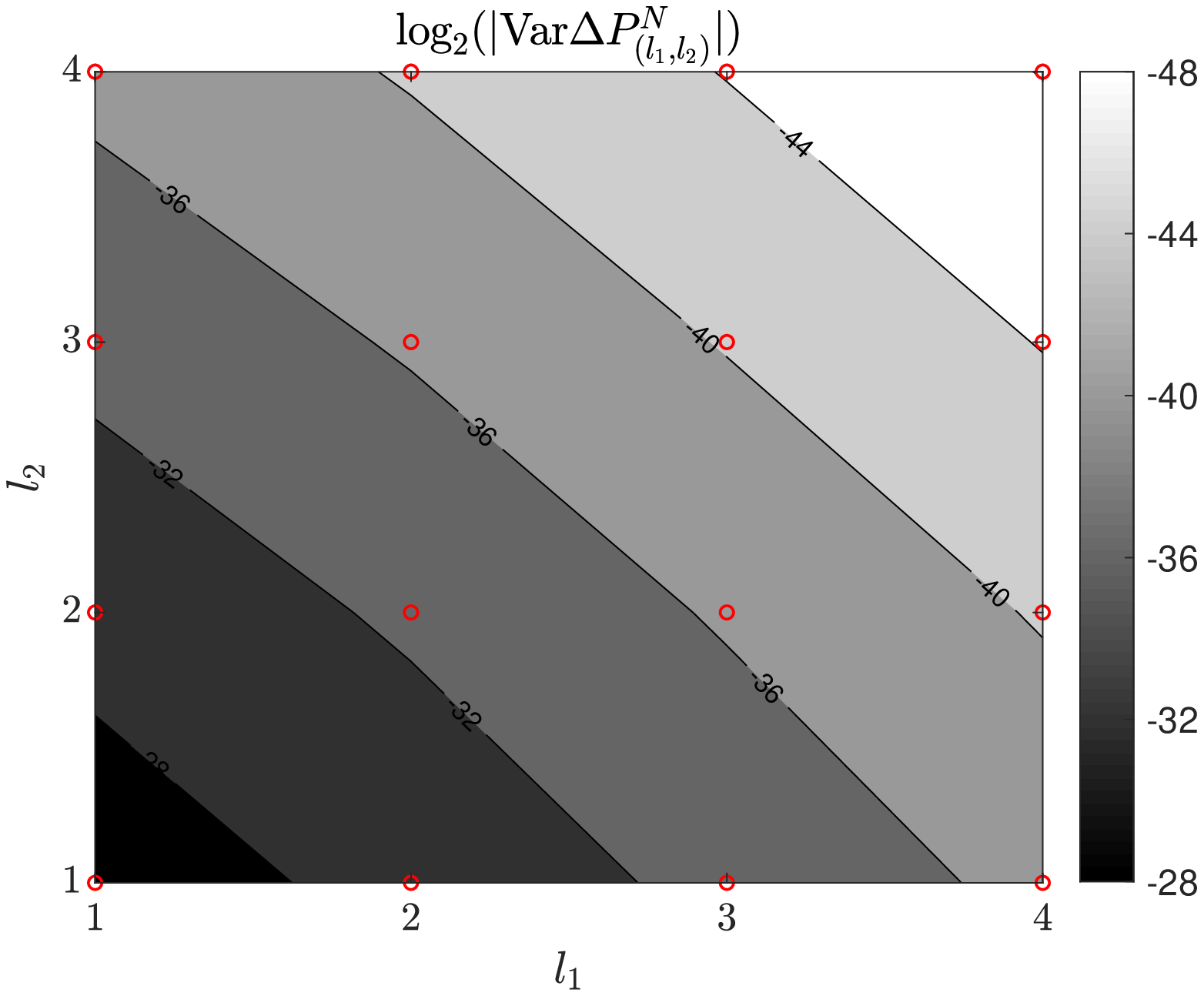}
\label{fig_sgrids_varplot}}
\caption{Contour plots of log values of sample mean and variance of $\Delta P^N_{(l_1\,l_2)}$ for $l_1,l_2>0$ used in the sparse combination method.}
\label{fig_sgrids_meanvarplot}
\end{figure}

\subsection{Mean and variance of sparse grid increments} %\texorpdfstring{$\delta P_l$}{dLl}}

Next, we estimate $\mathbb{E}\big[\delta P_l\big]$ and $\mathrm{Var}\big[\delta P_l\big]$, where $\delta P_l$ is given by \eqref{eq_2dexplicitDeltaL}. From Table~\ref{table_sgrids_level} we deduce that for the index set $I_l = \{(l_1,l_2):l_1+l_2\leq l+1\}$, the terms on the `boundary' (i.e., $l_1=0$ or $l_2=0$) will dominate. Although this does not affect the total order of complexity, we can further optimise the cost by modifying the index set such that the contribution from the boundary and `interior' are similar. 

Define therefore $\widehat{I}_l$ as the indices for interior meshes, and $\widetilde{I}_l$ for boundary meshes, 
\[
\widehat{I}_l = \{l_1>0,l_2>0:l_1+l_2\leq l\},\quad
\widetilde{I}_l = \{(l_1,0):0\leq l_1\leq l\}\cup \{(0,l_2):0\leq l_2\leq l\}.
\]
We balance the contributions from these two sets by finding, for some fixed $N$,
\[
%l^\ast = \max\bigg\{\argmin_{l_x\geq 0} \Big|\log_2\big|\mathbb{E}\Delta P^N_{(1,1)}\big| - \log_2\big|\mathbb{E}\Delta P^N_{(2+l_x,\,0)}\big| \Big|, \quad \argmin_{l_y\geq 0} \Big|\log_2\big|\mathbb{E}\Delta P^N_{(1,1)}\big| - \log_2\big|\mathbb{E}\Delta P^N_{(0,2+l_y)}\big| \Big| \bigg\}.
l^\ast \!= \max\bigg\{\!\!\argmin_{l_x\geq 0} \big|\mathbb{E}\big[\Delta P^N_{(1,1)}\big]\big|/\big|\mathbb{E}\big[\Delta P^N_{(2+l_x,\,0)}\big]\big|, \ \argmin_{l_y\geq 0} \big|\mathbb{E}\big[\Delta P^N_{(1,1)}\big]\big| / \big|\mathbb{E}\big[\Delta P^N_{(0,2+l_y)}\big]\big|\!\!\bigg\},
\]
and using the index set
%Then the index set is given by
\[
\mathcal{I}_l= \begin{cases}
\widetilde{I}_l & \text{if } l<2+l^\ast,\\
\widetilde{I}_l \cup \widehat{I}_{l-l^\ast} & \text{if } l\geq 2+l^\ast.
\end{cases}
\] 

%Thus we balance the errors between indices inside and indices on the boundary of the grids. 
For example, from Table \ref{table_sgrids_level}, $l^\ast = 2$. So we have
\begin{align*}
&\mathcal{I}_0 = \{(0,0)\},\qquad \mathcal{I}_l = \mathcal{I}_{l-1} \cup \{(l,0),(0,l) \}, \quad l \in \{1,2,3\}, \\
%\quad \mathcal{I}_2 = \mathcal{I}_1 \cup \{(2,0),(0,2) \},
%& \mathcal{I}_3 = \mathcal{I}_2 \cup \{(3,0),(0,3) \}, \quad
& \mathcal{I}_4 = \mathcal{I}_3 \cup \{(4,0),(0,4),(1,1) \},\qquad \mathcal{I}_5 = \mathcal{I}_4 \cup \{(5,0),(0,5),(1,2),(2,1) \},\cdots
\end{align*}

We use $k_l = k_0\cdot2^{-2l}$ and $\mathcal{I}_l$ for the construction of $\delta P_l$. Table \ref{table_mlmcsgrids} verifies 
\[
\mathbb{E}[\delta P_l] = O(l\,2^{-2l}),\quad \mathrm{Var}[\delta P_l]= O(l^2\,2^{-4l}),\quad \mathrm{Cost}[\delta P_l]= O(l\,2^{3l}).
\]

\begin{table}%[H]
\center
% \footnotesize
\begin{tabular}{ccccccc}
\hline
\multicolumn{1}{c}{} & \multicolumn{1}{c}{$l=0$} & \multicolumn{1}{c}{$l=1$} & \multicolumn{1}{c}{$l=2$} & \multicolumn{1}{c}{$l=3$} & \multicolumn{1}{c}{$l=4$} & \multicolumn{1}{c}{$l=5$}  \\
\hline
$\log_2\big|\mathbb{E}\big[\delta P_l\big]\big|$ & -0.0647 & -7.62 & -9.44 & -11.42 & -13.30 & -15.22 \\
$\log_2\big(\mathrm{Var}\big[\delta P_l\big]\big)$ & -9.15 & -16.93 & -20.71 & -24.66 & -28.20 & -31.83 \\
$\log_2\big(\mathrm{Cost}[\delta P_l]\big)$ & 13.64 & 18.50 & 22.02 & 
25.22 & 28.44 & 31.65\\
\hline
\end{tabular}
\caption{Log values of mean, variance, cost for $\delta L_l$, $h_0 = 1/2,\ k_0 = 1/8$.}
\label{table_mlmcsgrids}
\end{table}

Figure \ref{fig_mlmcsgrids} are the corresponding plots.  The fitted slopes in Figure \ref{fig_mlmcsgrids} are $-1.91,\ -3.73$, and $3.27$, respectively,
compared to the theoretical asymptotic values of $-2$, $-4$, and~$3$ (neglecting logarithmic terms, which will play a role for low levels).

The cost is counted here as the total number of operations. Specifically, under the ADI scheme \eqref{eq_ADIdifference}, the cost of one numerical realisation of \eqref{eq_SPDE}
with mesh size $h_x,\,h_y$ and timestep $k$ is
\[
\mathrm{cost}/\mathrm{path} \ \sim \ \frac{x_{\text{max}} - x_{\text{min}}}{h_x}\cdot \frac{y_{\text{max}} - y_{\text{min}}}{h_y}\cdot \frac{T}{k}.
\]

\begin{figure}
\centering
\includegraphics[width=1.03 \columnwidth, trim={2.5cm 0 2.5cm 0.5 cm},clip]{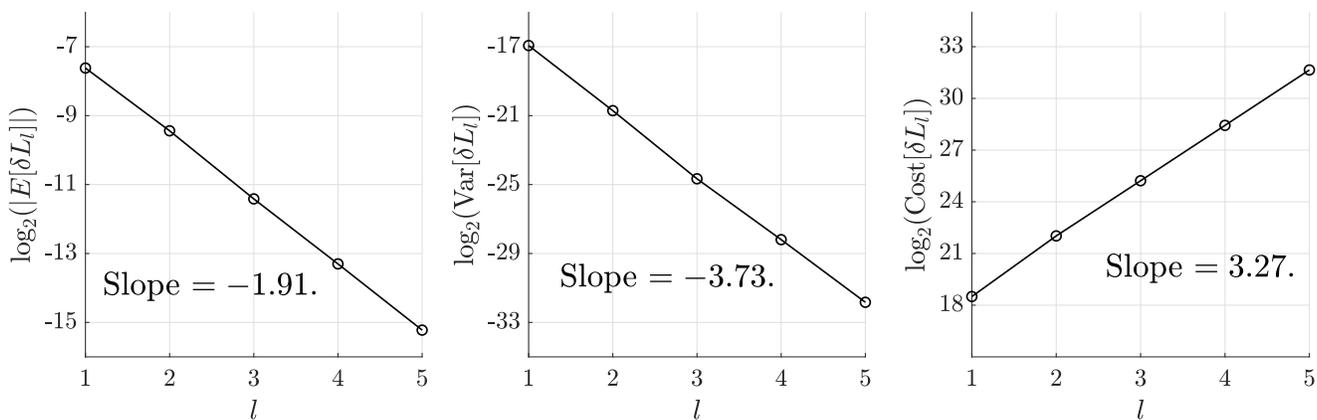}
\caption{Log values of mean, variance and cost for $\delta P_{l}$.}
\label{fig_mlmcsgrids}
\end{figure}
% \begin{figure}%[H]
% % \hspace{-2in}
% \centering
% \subfloat%[$\log_2\big|\mathbb{E}(\delta P_{l}) \big|$]
% {
% \includegraphics[width=0.2 \columnwidth]{2d_mlmc_sgrids_mean}
% \label{fig_mlmcsgrids_meanplot}
% }
% \subfloat%[$\log_2\mathrm{Var}(\delta P_{l})$]
% {
% \includegraphics[width=0.2 \columnwidth]{2d_mlmc_sgrids_var}
% \label{fig_mlmcsgrids_varplot}
% }\\
% \subfloat%[$\log_2\mathrm{Cost}(\delta P_{l})$]
% {
% \includegraphics[width=0.2 \columnwidth]{2d_mlmc_sgrids_cost}
% \label{fig_mlmcsgrids_costplot}
% }
% \caption{Log values of mean, variance and cost for $\delta P_{l}$.}
% \label{fig_mlmcsgrids}
% \end{figure}

Since the mean square error for the estimator can be expressed as sum of the variance and the square of the weak error, we split the accuracy \enquote{budget} as
\begin{align}
\big|\mathbb{E}\big[P_l^N -P\big]\big| \leq \alpha\varepsilon, \qquad \mathrm{Var}\big[P_l^N \big] \leq (1-\alpha^2)\varepsilon^2,
\end{align}
and optimize over $\alpha$. Since
$V_{l}C_{l}\sim l^3\,2^{-l},$
the variance decays with levels more rapidly than the cost increases with levels, and thus the dominant cost is on level $0$. Therefore, we choose $\alpha$ relatively small to reduce the cost on level $0$, and hence we reduce the total cost. To find the optimal $\alpha$, one approach is to approximate the total cost given different $\alpha$, and choose the one which minimise the complexity.

\begin{figure}%[H]
\centering
\subfloat[MC, Sparse MC, MLMC, Sparse MLMC.]{
\includegraphics[width=0.48 \columnwidth, trim={0.8cm 0 1.5cm 0},clip]{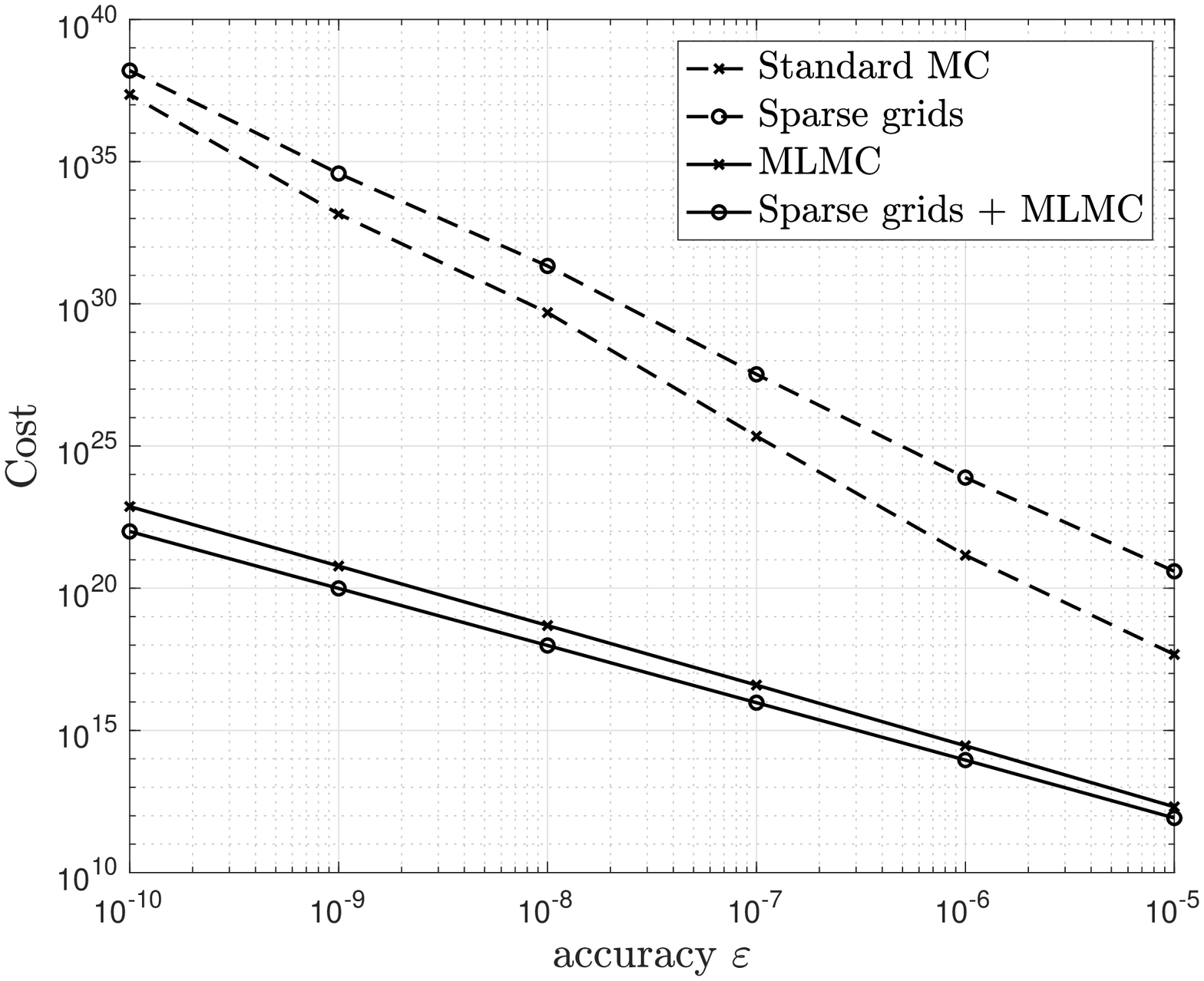}
\label{fig_comparecost_1}
}
\subfloat[Sparse and full grid MLMC.]{
\includegraphics[width=0.48 \columnwidth, trim={0.8cm 0 1.5cm 0},clip]{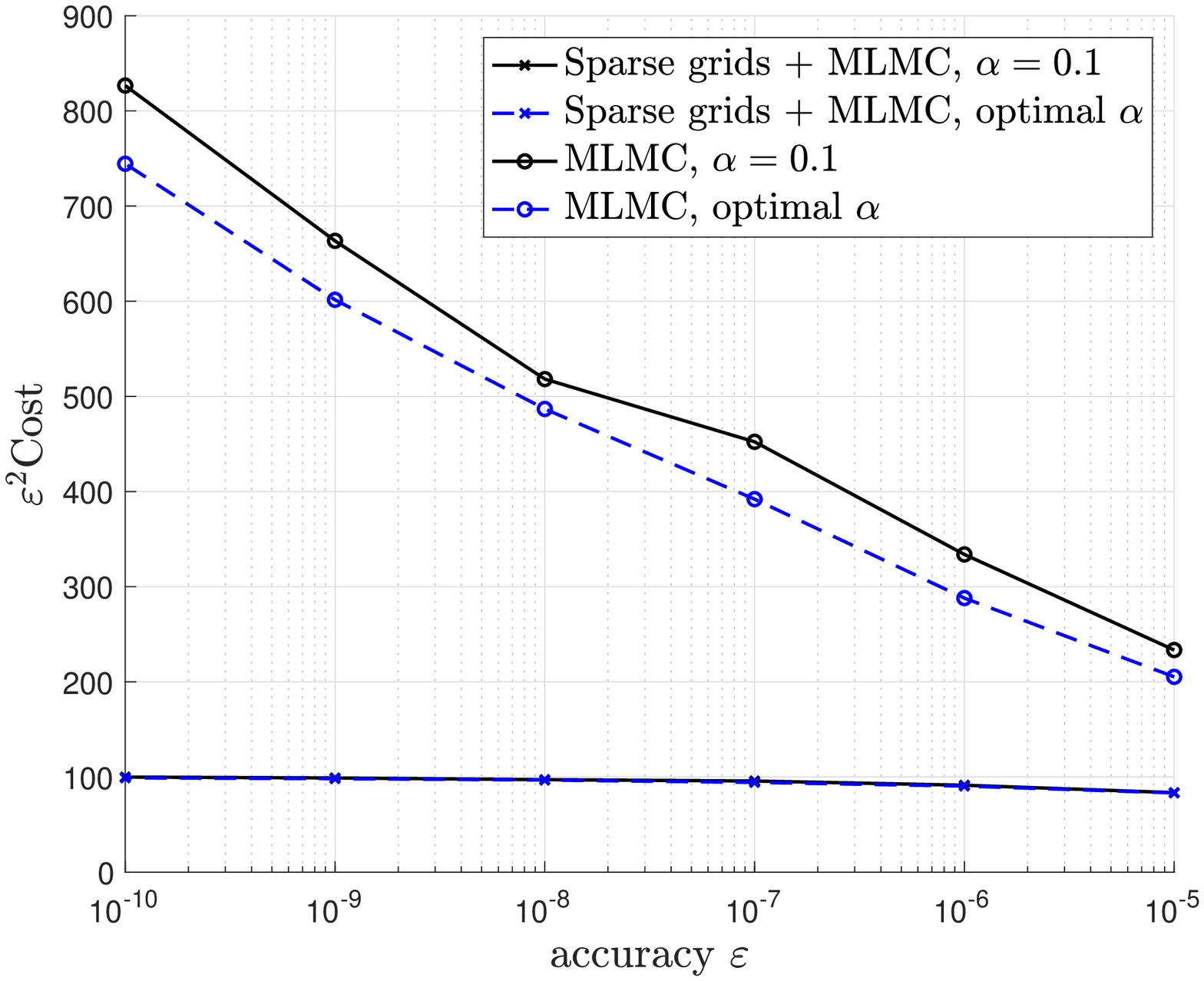}
\label{fig_comparecost_2}
}
\caption{Comparison of total cost among different schemes.}
\label{fig_comparecost}
\end{figure}

Figure~\ref{fig_comparecost_1} is the loglog plot of total cost among all the methods mentioned before: standard Monte Carlo, sparse combination, multilevel Monte Carlo, sparse combination with MLMC. All the schemes use the optimal~$\alpha$ for each accuracy. We can see from the graph that standard MC gives the cost $O(\varepsilon^{-4})$, and sparse combination gives $O(\varepsilon^{-7/2})$, as expected. As for MLMC and sparse combination with MLMC methods, both yield approximately $O(\varepsilon^{-2})$, which verifies our proof as the log term in MLMC cost is negligible in this plot.

Figure~\ref{fig_comparecost_2} compares $\varepsilon^2 W$ between sparse combination with MLMC and MLMC alone (without sparse combination). The total computational cost of sparse combination with MLMC is approximately proportional to $\varepsilon^{-2}$, hence $\varepsilon^2 W$ does not vary significantly for different accuracy $\varepsilon$. However, as the total cost of the multilevel scheme is proportional to $\varepsilon^{-2}(\log\varepsilon)^{2}$, and we can see from the figure that $\varepsilon^2 W$ increases as $\varepsilon$ goes to zero. For the black line, we use $\alpha = 0.1$. For the blue dotted line, each scheme uses the optimal $\alpha$ for different accuracies. As a result, sparse combination with MLMC achieves the optimal order of complexity.

\section{Generalisation to higher dimensions}\label{sec_highdim}

We can generalise the SPDE to $d$ dimensions, where $d\geq 3$. Let $(\Omega,\mathcal{F},\mathbb{P})$ be a probability space, where there is given a $d$-dimensional standard Brownian motion $W$ with correlation matrix $\Sigma = (\rho_{k,m})$. The natural extension of the SPDE \eqref{eq_SPDE} is
\begin{equation}\label{eq_ndspde}
\mathrm{d}v = -\sum_{k=1}^d \mu_k\frac{\partial v}{\partial x_k}\,\mathrm{d}t + \frac{1}{2}\sum_{k,m=1}^{d}\sqrt{\rho_k\rho_m}\,\rho_{k,m}\frac{\partial^2 v}{\partial x_k\partial x_m}\,\mathrm{d}t - \sum_{k=1}^d \sqrt{\rho{}_k}\frac{\partial v}{\partial x_k}\,\mathrm{d}W_t^k,
\end{equation}
for $x\in\mathbb{R}^d,\ 0<t\leq T$, where $\mu\in\mathbb{R}^d$ and $\rho\in(0,1)^d$ are parameters, subject to the Dirac initial datum 
$v(0,x) = \delta(x_1-x_{1,0}) \otimes\cdots\otimes \delta(x_d-x_{d,0})$,
% $v(0,x) = \delta(x-x_0)$,  
where $x_0\in\mathbb{R}_+^d$ is given.

%In the corresponding filtering problem, $W$ is the observation process with correlation matrix $\Sigma$, and the signal processes $Z \in\mathbb{R}^d$ satisfies 
%\[
%Z_t = Z_0 + \beta t + \sigma\,B_t + \gamma\,W_t,
%\]
% where $B$ is a $d$-dimensional standard Brownian motion independent of $W$, $\beta = (\mu_1,\mu_2,\ldots,\mu_d)'$, $\sigma = \mathrm{diag}(\sqrt{1-\rho_1},\sqrt{1-\rho_2},\ldots,\sqrt{1-\rho_d})$, $\gamma = \mathrm{diag}(\sqrt{\rho_1},\sqrt{\rho_2},\ldots,\sqrt{\rho_d})$. The conditional distribution function of $Z$ given $W$ has a density $v$, which satisfies \eqref{eq_ndspde}.

We use a spatial grid with uniform spacing $h_{x_i}>0$, $i=1,\ldots, d$, and let $V_{i_1,\ldots,i_d}^{m}$ be the approximation to $v(mk,i_1h_{x_1},\ldots,i_dh_{x_d})$, $m=1,\ldots,N$, $i_j\in\mathbb{Z}$, where $i_{0,j}\coloneqq[x_{0,j}/h_{x_j}]$, the closest integers to $x_{0,j}/h_{x_j}$.
We approximate $v(0,x_1,\ldots,x_d)$ by
\[
V_{i_1,\dots,i_d}^0 = \prod_{j=1}^d h_{x_j}^{-1}\delta_{(i_{0,1},\ldots,i_{0,d})} = 
\begin{cases}
\prod_{j=1}^d h_{x_j}^{-1},\quad &i_j=i_{0,j},\quad\forall j\in\{1,\ldots,d\},\\
0,&\text{otherwise}.
\end{cases}
\]

In analogy with \eqref{eq_2dlossfunctional}, we define a linear functional by
\begin{equation}\label{eq_ndloss}
P_T = \int_{\mathbb{R}_+^d} v(T,x)\,\mathrm{d}x.
\end{equation}

Similar as before, we use an implicit finite difference scheme to approximate \eqref{eq_ndspde}, and we use the trapezoidal rule for $P$ with a truncation of the domain. We have the following conjectures and results.

\begin{conjecture}
\label{conj_1}
Assume the implicit finite difference scheme is stable
\footnote{We expect this to be true for `small enough' correlations as in the two-dimensional case for \eqref{eq_stablerhos},
but it is not obvious what the conditions will be for specific $d>2$ without performing the analysis.},
and the timestep $k$ and mesh size $h_{x_i}$ satisfy 
\begin{equation}
\label{general_cfl}
k\leq \lambda\min\{h_{x_1}^2,h_{x_2}^2,\cdots,h_{x_d}^2\},
\end{equation}
for arbitrary fixed $\lambda>0$. Then we have the error expansion
\[
\sqrt{\mathbb{E}\big[| V_{i_1,\dots,i_d}^N - v(T,x_1,\ldots,x_d)|^2\big]}= %O(k) + 
O(h_{x_1}^2) + O(h_{x_2}^2) +\ldots + O(h_{x_d}^2 ),
\]
where $N=T/k$ is the number of time steps.

\end{conjecture}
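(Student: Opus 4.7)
The plan is to mirror the Fourier analysis underlying Theorem \ref{rmk_2dImplicitConvergence} in $d$ dimensions. First I would introduce the $d$-dimensional Fourier pair
\[
\widetilde{v}(t, \xi) = \int_{\mathbb{R}^d} v(t, x)\, \mathrm{e}^{-\mathrm{i} \xi \cdot x}\, \mathrm{d}x, \qquad v(t,x) = \frac{1}{(2\pi)^d} \int_{\mathbb{R}^d} \widetilde{v}(t, \xi)\, \mathrm{e}^{\mathrm{i} \xi \cdot x}\, \mathrm{d}\xi,
\]
and the analogous discrete-continuous pair over $\prod_i [-\pi/h_{x_i}, \pi/h_{x_i}]$. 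Applying these to \eqref{eq_ndspde} and to its semi-implicit Milstein discretisation yields $\widetilde{v}(T,\xi) = X(T,\xi)\, \widetilde{v}(0,\xi)$ and $\widetilde{V}^N(\xi) = X_N(\xi)\, \widetilde{V}^0(\xi)$, where $X(T)$ is an explicit stochastic exponential generalising \eqref{eq_solXn} and $X_N = \prod_{n=0}^{N-1} C_n$ with $C_n$ a rational amplification factor analogous to \eqref{eq_Xn2}, whose denominator factorises as $\prod_{i=1}^d (1 - a_{x_i} k)$ under the ADI splitting, and whose numerator collects the first-order Brownian terms, the diagonal Milstein corrections in each coordinate, and the $\binom{d}{2}$ cross-Milstein terms $\sqrt{\rho_k\rho_m}\, k\, Z_{n,k}\widetilde{Z}_{n,m}$ for $k\ne m$.

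Second, in the low-wave region $\Omega_{\mathrm{low}} = \{|\xi_i|\le h_{x_i}^{-p},\,\forall i\}$ for some fixed $0<p<1/2$, I would Taylor-expand
\[
e_n := \log C_n - \Big(-\tfrac{1}{2}\sum_{k} (1-\rho_k)\xi_k^2 k - \mathrm{i} \sum_k \sqrt{\rho_k k}\, \xi_k \widetilde{Z}_{n,k}\Big)
\]
in $h_{x_i}\xi_i$ and in $\sqrt{k}$. The design of the Milstein scheme, in particular the cancellation noted after \eqref{eq_2DimplicitMilstein} between the implicit cross-derivatives and the cross-Milstein terms, should ensure that all terms of order $k$, $k\xi_i^2$, and $k \xi_i \xi_j$ vanish in expectation upon aggregation over $N=T/k$ steps. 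Using \eqref{eq_XN} and the sub-Gaussian decay of $|X(T,\xi)|$ in $\xi$, Parseval's identity then translates the pointwise logarithmic error into an $L_2$ spatial error of the form $\sum_{i=1}^d O(h_{x_i}^2)$, under the CFL-type constraint \eqref{general_cfl} which controls the ratios $k/h_{x_i}^2$.

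Third, I would split the complement $\Omega^d\setminus\Omega_{\mathrm{low}} = \bigcup_{i=1}^d\{|\xi_i|> h_{x_i}^{-p}\}$ and bound each piece separately. On such a set, mean-square stability (the standing hypothesis) gives $\mathbb{E}[|X_N|^2]\le \mathbb{E}[|X_0|^2]$, while the exact symbol satisfies $\mathbb{E}|X(T,\xi)|^2 = \exp(-\sum_k(1-\rho_k)\xi_k^2 T)$, which is exponentially small in at least one coordinate direction on each subset. This yields a contribution of $o(h_{x_i}^r)$ for any $r>0$ in that coordinate, directly generalising Lemmas~\ref{lem_2dSGridmidwave}--\ref{lem_2dSGridhighwave}. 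Adding the low- and high-wave contributions yields the claimed $\sum_i O(h_{x_i}^2)$ bound.

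The main obstacle is not the structure of the proof but the step hidden inside the hypothesis: for $d=2$, the explicit conditions \eqref{eq_stablerhos} were obtained in \cite{reisinger2018stability} by a delicate estimate of $\mathbb{E}[|C_n|^2]$ involving the interaction between the implicit factor and the single Milstein cross-correction. For general $d$, the numerator of $C_n$ contains $\binom{d}{2}$ cross-Milstein increments whose fourth moments, after multiplication with the reciprocal of $\prod_i(1-a_{x_i}k)$, need to be shown to satisfy $\mathbb{E}[|C_n|^2]\le 1$ uniformly in $\xi$. Extracting sharp, dimension-dependent correlation conditions on $(\rho_k)$ and $(\rho_{k,m})$ under which this holds is combinatorially involved and is the genuine technical barrier; once such a stability certificate is established, the remainder of the argument is essentially algorithmic.
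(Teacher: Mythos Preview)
The paper does not prove this statement at all: it is explicitly labelled a \emph{conjecture}, and the footnote makes clear that the authors regard the required stability conditions for $d>2$ as open (``it is not obvious what the conditions will be for specific $d>2$ without performing the analysis''). There is therefore no proof in the paper against which to compare your proposal.

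That said, your outline is a faithful $d$-dimensional lift of the machinery the paper uses for $d=2$ (Fourier symbol, low/mid/high wave splitting, Taylor expansion of $\log C_n$, Parseval), and you correctly identify the genuine obstruction: establishing $\mathbb{E}[|C_n|^2]\le 1$ uniformly in $\xi$ when the numerator carries $\binom{d}{2}$ cross-Milstein terms. This is precisely the point the authors flag as unresolved. One small caveat: in your third step you invoke mean-square stability to bound $\mathbb{E}[|X_N|^2]$ on the high-wave region, but stability alone gives $\mathbb{E}[|X_N|^2]\le |X_0|^2$, which is $O(1)$ rather than exponentially small; the $d=2$ proof (Lemma~\ref{lem_2dSGridhighwave}) actually needs a quantitative contraction $\mathbb{E}[|C_n|^2]\le 1 - c\,k\,\xi_i^2 u_i/(\cdots)$ to force exponential decay in $\xi$, not merely non-expansion. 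Your sketch should make that distinction explicit, since it is exactly where the dimension-dependent correlation constraints enter.
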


Next we apply the sparse combination method to \eqref{eq_ndloss} with fixed $k$ satisfying (\ref{general_cfl}).
%\leq \lambda\min\{h_{x_1}^2,h_{x_2}^2,\cdots,h_{x_d}^2\}$. 
Similar to Section \ref{sec_SparseGridsApprox}, let $\Delta_i$ be the first-order difference operator along directions $i=1,\ldots, d$, defined as in \eqref{eq_2dfirstorderdifference}, and %the first-order mixed difference operator 
$\Delta = \Delta_1\otimes\cdots\otimes\Delta_d$.

\begin{conjecture}
\label{conj_2}
Assume the implicit finite difference scheme is stable. Let $h_{x_i} = h_{0}\cdot2^{-l_i}$, $i=1,\cdots,d$, and $k$ be the timestep such that for an arbitrary fixed $\lambda>0$, 
$k\leq \lambda\min\{h_{x_1}^2,h_{x_2}^2,\cdots,h_{x_d}^2\}$, where
$N = T/k$ the number of time steps. Then the first and second moments of $\Delta P^N_{\bm{l}}$ satisfy
\begin{equation}
\label{eq:bounds_d}
\Big|\mathbb{E}\left[\Delta P^N_{\bm{l}}\right]\Big| = O(h_{x_1}^2\cdots h_{x_d}^2),\qquad
\mathbb{E}\Big[\left|\Delta P^N_{\bm{l}}\right|^2\Big]= O(h_{x_1}^4\cdots h_{x_d}^4).
\end{equation}
\end{conjecture}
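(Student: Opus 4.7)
The plan is to extend the Fourier-analytic proof of Theorem~\ref{thm_sgridserror} to general dimension $d$. First, I would write $V^n_{i_1,\ldots,i_d}$ via its $d$-dimensional discrete-continuous Fourier transform, yielding a propagator $X_N^{\bm{l}}(\bm{\xi})=\prod_{n=0}^{N-1}C_n(\bm{\xi})$ where $C_n$ is the natural generalisation of~\eqref{eq_Xn2}, involving $d$ diagonal terms of type $a_{x_i},b_{x_i},c_{x_i}$ and $\binom{d}{2}$ Milstein cross terms. By the $d$-dimensional analogue of Section~\ref{sec_sgridsestimator},
\[
P^N_{\bm{l}} = \frac{1}{(2\pi)^d}\int_{\prod_i[-\pi/h_{x_i},\pi/h_{x_i}]} \Big(\prod_{i=1}^d h_{x_i}\,\chi(h_{x_i},\xi_i)\Big)\, X_N^{\bm{l}}(\bm{\xi})\,\mathrm{d}\bm{\xi},
\]
so that $\Delta=\Delta_1\otimes\cdots\otimes\Delta_d$ acts on $P^N_{\bm{l}}$ only through the dependence of the integrand on the mesh sizes $h_{x_i}$, producing a sum of $2^d$ terms that is the direct generalisation of~\eqref{eq_2dG}.

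Next, for fixed $0<p<1/2$, I would split the Fourier domain into the $2^d$ disjoint subregions indexed by $S\subseteq\{1,\ldots,d\}$: for $i\in S$ the frequency is ``high'' ($h_{x_i}^{-p}<|\xi_i|\leq\pi/h_{x_i}$), while for $i\notin S$ it is ``low'' ($|\xi_i|\leq h_{x_i}^{-p}$). The dominant contribution comes from $S=\emptyset$. On this region, Taylor expanding the smooth integrand in the variables $h_{x_i}^2$ and noting that $\Delta_i$ annihilates any term constant in $h_{x_i}^2$, the tensor product $\Delta$ kills every monomial constant in at least one $h_{x_i}^2$; the lowest surviving order is therefore $\prod_{i=1}^d h_{x_i}^2$, with a random coefficient uniformly bounded in $L^2(\Omega)$ thanks to the Gaussian-type envelope $\mathbb{E}[|X_N|^2]\leq \exp\!\bigl(-c\sum_i(1-\rho_i)\xi_i^2 T\bigr)$. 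This is the direct $d$-dimensional analogue of Lemma~\ref{lem_2dSGridlowwave}. For each region with $|S|\geq 1$, the strategy of Lemmas~\ref{lem_2dSGridmidwave} and~\ref{lem_2dSGridhighwave} applies: in every low direction $i\notin S$ the Gaussian envelope yields arbitrary polynomial decay in $h_{x_i}$, while in every high direction $i\in S$ the numerical symbol $|C_n|^2$ is strictly bounded below $1$ at wave numbers $|\xi_i|>h_{x_i}^{-p}$, so raising to the power $N\geq h_{x_i}^{-2}/\lambda$ produces an $\exp(-c\,h_{x_i}^{-2p})$ factor and hence $o(h_{x_i}^r)$ for every $r>0$. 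All $2^d-1$ subregions with $|S|\geq 1$ are therefore negligible compared with $\prod_i h_{x_i}^2$.

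The principal obstacle will be establishing this high-wave contraction of $|C_n(\bm{\xi})|^2$ uniformly in $\bm{l}$, which is exactly the $d$-dimensional analogue of Lemma~\ref{lem_2dSGridhighwave}. In two dimensions the requirement is encapsulated in the technical assumption~\eqref{eq_stablerhos}, obtained by explicit computation of the one-step symbol. For $d>2$ the symbol involves $d$ quadratic terms, $\binom{d}{2}$ Milstein cross terms and the $d$-factor ADI factorisation, and identifying precisely which correlation matrices $\Sigma$ guarantee $|C_n|^2\leq 1-\eta$ for some $\eta>0$ on each high-wave direction is a substantial piece of combinatorial and algebraic book-keeping; this is exactly why Conjecture~\ref{conj_1} is presently only a conjecture. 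Granting it, the $d$-fold Taylor expansion on $\Omega_{\text{low}}$ is a direct, if lengthy, extension of the two-dimensional computation and yields the first bound in~\eqref{eq:bounds_d}; the second bound follows because the same expansion reveals that the leading random coefficient in front of $\prod_i h_{x_i}^2$ has second moment bounded uniformly in $\bm{l}$, so the variance estimate mirrors the mean estimate up to a constant.
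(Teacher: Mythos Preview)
The paper does not prove this statement: it is explicitly labelled and presented as a \emph{conjecture} in Section~\ref{sec_highdim}, with no accompanying proof. The paper's discussion of higher dimensions is deliberately informal, and the footnote to Conjecture~\ref{conj_1} makes clear that the authors regard the precise stability conditions for $d>2$ as open (``it is not obvious what the conditions will be for specific $d>2$ without performing the analysis''). So there is no ``paper's own proof'' to compare against.

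That said, your proposed route is exactly the natural generalisation the paper implicitly has in mind: take the Fourier-analytic machinery of Section~\ref{sec_2dMIMCEstimator}, write the $d$-dimensional propagator, split the frequency domain into $2^d$ regions indexed by the set of ``high'' directions, and argue that only the all-low region contributes at leading order, where the tensor-product difference $\Delta$ extracts the mixed $\prod_i h_{x_i}^2$ term. Your identification of the key obstacle---establishing the uniform high-wave contraction $\mathbb{E}[|C_n|^2]\le 1-\eta$ for the $d$-dimensional symbol, which in $d=2$ is precisely condition~\eqref{eq_stablerhos}---is spot on and is the reason the authors leave both Conjectures~\ref{conj_1} and~\ref{conj_2} unproven. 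One minor refinement: in the $2$-d proof the ``mid'' regions (Lemma~\ref{lem_2dSGridmidwave}) are handled not purely by the numerical symbol's contraction but by introducing the auxiliary factorised approximation $\widetilde{X}_N$ and borrowing one-dimensional results from \cite{reisinger2018analysis}; the $d$-dimensional analogue would likewise need a hierarchy of partially factorised intermediaries, which adds another layer of book-keeping beyond what you describe.
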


%Following \cite{reisinger2012analysis}, g
Given a sequence of index sets
\[
\mathcal{I}_l= \{\bm{l}\in\mathbb{N}_0^d:\,l_1+\cdots + l_d\leq l+1\},
\] 
the approximation on level $l$ is defined (similar to \eqref{eq_sgridsapprox}) as 
\begin{equation}
\label{PlN_gen}
{P}_l^N = \sum_{\bm{l}\in\mathcal{I}_l} \Delta P_{\bm{l}}^N.
\end{equation}
Note that we use the same $k$ for all $\Delta P^N_{\bm{l}}$, $(l_1,\cdots,l_d)\in\mathcal{I}_l$. Then we have:

\begin{proposition}\label{conj_errorL}
%Assume the implicit finite difference scheme is stable. 
Assume Conjectures \ref{conj_1} and \ref{conj_2} to be true.
Then, for $P$ given by \eqref{eq_ndloss} %is the functional of the solution to SPDE \eqref{eq_ndspde}, 
and $P_l^N$ by \eqref{PlN_gen}, we have
\[
\sqrt{\mathbb{E}\left[\lvert P^N_{l} - P\rvert^2\right]} = O(l^{d-1}\,2^{-2l}).
\]
Moreover, {by choosing $l = \big[\frac{1}{2}(-\log_2\varepsilon + (d-1)\log_2|\log\varepsilon|)\big]$,} the computational cost $W$ to achieve a RMSE $\varepsilon$ using standard Monte Carlo estimation is 
{
\begin{equation}\label{eq_ndcostsgrid}
W = O\big(\varepsilon^{-\frac{7}{2}}|\log\varepsilon|^{\frac{5}{2}(d-1)}\big).
\end{equation}
}
\end{proposition}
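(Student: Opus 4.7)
The plan is to imitate the proof of Theorem~\ref{thm_sgridsapprox}: bound $P_l^N-P$ by a tail sum of hierarchical surpluses, use the $L^2$ bound from Conjecture~\ref{conj_2}, count how many multi-indices lie on each diagonal $\{l_1+\cdots+l_d=s\}$, and then feed the resulting bias bound into the standard Monte Carlo complexity calculation.

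First I would prove the RMSE estimate. Writing the full telescoping identity as
\[
P = \sum_{\bm{l}\in\mathbb{N}_0^d}\Delta P_{\bm{l}}^N + R_d,
\]
where $R_d$ is the residual remaining after full spatial refinement, the CFL-coupled choice $k=O(2^{-2l})$ gives $\sqrt{\mathbb{E}[|R_d|^2]}=O(2^{-2l})$ by the $d$-dimensional analogue of Proposition~\ref{prop_2dErrorofLoss}. Subtracting $P_l^N=\sum_{\bm{l}\in\mathcal{I}_l}\Delta P_{\bm{l}}^N$ and applying Minkowski's inequality together with Conjecture~\ref{conj_2},
\[
\sqrt{\mathbb{E}\big[|P_l^N-P|^2\big]} \leq \sum_{\bm{l}\notin\mathcal{I}_l}\sqrt{\mathbb{E}\big[|\Delta P_{\bm{l}}^N|^2\big]} + O(2^{-2l}) \leq C\sum_{s>l+1}s^{d-1}\,2^{-2s} + O(2^{-2l}),
\]
where I use the lattice-point count $\#\{\bm{l}\in\mathbb{N}_0^d:l_1+\cdots+l_d=s\}=\binom{s+d-1}{d-1}=O(s^{d-1})$. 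The geometric tail evaluates to $O(l^{d-1}\,2^{-2l})$, establishing the first assertion.

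Second, the cost bound. To make the bias at most $\varepsilon$, inverting $l^{d-1}\,2^{-2l}\sim\varepsilon$ motivates $l=\big[\frac{1}{2}(-\log_2\varepsilon+(d-1)\log_2|\log\varepsilon|)\big]$, so that $2^{-2l}\sim\varepsilon/|\log\varepsilon|^{d-1}$ and $l=O(|\log\varepsilon|)$. For one sample of $P_l^N$, the cost of $\Delta P_{\bm{l}}^N$ is $O(2^{l_1+\cdots+l_d}\cdot N_l)$ with $N_l=T/k_l=O(2^{2l})$, and summing over the index set via
\[
\sum_{\bm{l}\in\mathcal{I}_l}2^{l_1+\cdots+l_d}\leq C\sum_{s=0}^{l+1}s^{d-1}\,2^s=O(l^{d-1}\,2^l)
\]
gives per-sample cost $O(l^{d-1}\,2^{3l})$. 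Using $2^{3l}=O(\varepsilon^{-3/2}|\log\varepsilon|^{3(d-1)/2})$ reshapes this as $O(\varepsilon^{-3/2}|\log\varepsilon|^{5(d-1)/2})$, the exponent $5(d-1)/2$ arising as $(d-1)$ from the diagonal count plus $3(d-1)/2$ from the bias calibration; multiplying by $M=O(\varepsilon^{-2})$ MC samples yields \eqref{eq_ndcostsgrid}.

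The main obstacle is not combinatorial but analytic: the whole argument rests on the residual bound $\sqrt{\mathbb{E}[|R_d|^2]}=O(2^{-2l})$, i.e.\ the $d$-dimensional analogue of Proposition~\ref{prop_2dErrorofLoss}, which in turn requires both Conjecture~\ref{conj_1} and a $d$-dimensional stability statement extending~\eqref{eq_stablerhos}. Once these are in hand, the remaining work is a direct transcription of the two-dimensional proof, with the binomial factor $\binom{s+d-1}{d-1}$ replacing the linear factor that appeared in the 2D diagonal count.
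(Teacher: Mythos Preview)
Your proposal is correct and follows essentially the same approach as the paper: the RMSE bound is the direct $d$-dimensional transcription of the proof of Theorem~\ref{thm_sgridsapprox} (tail sum of surpluses, Minkowski, Conjecture~\ref{conj_2}, diagonal count $\binom{s+d-1}{d-1}=O(s^{d-1})$), and the cost calculation generalises the one given after Theorem~\ref{thm_sgridsapprox} in Section~\ref{sec_SparseGridsApprox}. Your caveat about the residual bound is also in line with the paper's presentation, since the $d$-dimensional analogue of Proposition~\ref{prop_2dErrorofLoss} is implicitly subsumed under Conjecture~\ref{conj_1} and its assumed stability.
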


Next, we combine MLMC with the sparse combination scheme. Similar to Section~\ref{sec_2dMLMC}, let $P_l\coloneqq P_l^{N_l}$ be an approximation to $P$ as in \eqref{eq_sgridsapprox} using a discretisation with timestep $k_l$ and index set $\mathcal{I}_l$ defined above. Then we define %the level difference
\begin{equation}\label{eq_nddeltaL}
\delta P_l = P_l - P_{l-1},\quad l\geq 0,
\end{equation}
where we denote $P_{-1}\coloneqq 0$. Thus the approximation to $P$ at level $l^\ast$ has the form
\[
\mathbb{E}[P_{l^\ast}] = \sum_{l=0}^{l^\ast}\mathbb{E}[\delta P_l] = \sum_{l=0}^{l^\ast}\mathbb{E}\bigg[\sum_{\bm{l}\in\mathcal{I}_l} \Delta P_{\bm{l}}^{N_l} - \sum_{\bm{l}\in\mathcal{I}_{l-1}} \Delta P_{\bm{l}}^{N_{l-1}}\bigg] .
\]

In this way, we simulate $\delta P_l,\,l=0,1,\ldots,l^\ast$ instead of directly simulating $P_{l^\ast}$, such that the variance of $\delta P_l = P_l - P_{l-1}$ is considerably reduced by using the same Brownian path for $P_l$ and $P_{l-1}$.

Let $\widehat{Y}_l$ be an estimator for $\mathbb{E}\big[\delta P_l\big]$ using $M_{l}$ samples. Each estimator is an average of $M_{l}$ independent samples, where $\delta \widehat{P}_{l}^{(m)}$ is the $m$-th sample arising from a single SPDE approximation,
\[
\widehat{Y}_{l} = \frac{1}{M_{l}}\sum_{m=1}^{M_{l}}\delta \widehat{P}_{l}^{(m)},\quad l=0,\ldots,l^\ast.
\]

The MLMC estimator is defined as
\begin{equation}\label{eq_ndMLMCL}
\widehat{P}_{l^\ast}\coloneqq\sum_{l=0}^{l^\ast}\widehat{Y}_{l} = \frac{1}{M_{l}}\sum_{m=1}^{M_{l}}\bigg[\sum_{\bm{l}\in\mathcal{I}_l} \Big(\Delta \widehat{P}_{\bm{l}}^{N_l}\Big)^{(m)} - \sum_{\bm{l}\in\mathcal{I}_{l-1}} \Big(\Delta \widehat{P}_{\bm{l}}^{N_{l-1}}\Big)^{(m)}\bigg],
\end{equation}
where $\Big(\Delta \widehat{P}_{\bm{l}}^{N_l}\Big)^{(m)}$ is the $m$-th sample for the difference on spatial grid level $\bm{l} = (l_1,\ldots,l_d)$ of the SPDE approximation using $N_l$ time steps. Following \cite{giles2008multilevel}, through optimising $M_l$ to minimise the computational cost for a fixed variance, we can achieve the optimal complexity $O(\varepsilon^{-2})$ in this case.

\begin{proposition}
Assume Conjectures \ref{conj_1} and \ref{conj_2} to be true.
Given $\delta P_l$ from \eqref{eq_nddeltaL}, there exist $C_1,\,C_2,\,C_3 >0$, such that
\begin{align*}
\mathbb{E}[\delta P_l] \leq C_1\cdot l^{d-1} \,2^{-2l}, \qquad
\mathrm{Var}[\delta P_l] \leq C_2\cdot l^{2d-2 }\,2^{-4l}, \qquad
\mathrm{Cost}[\delta P_l] \leq C_3\cdot l^{d-1} \,2^{3l}.
\end{align*}
Then, given a RMSE $\varepsilon$, the MLMC estimator \eqref{eq_ndMLMCL} leads to a total complexity $O(\varepsilon^{-2})$.
\end{proposition}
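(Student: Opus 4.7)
The plan is to reduce the proposition to a direct application of the standard MLMC complexity theorem (Theorem 3.1 of \cite{giles2008multilevel}, in its version that allows polylogarithmic corrections), once the three stated bounds on mean, variance, and cost have been verified. First I would establish the bound on $|\mathbb{E}[\delta P_l]|$: since $\delta P_l = P_l - P_{l-1}$, the triangle inequality applied to Proposition~\ref{conj_errorL} gives
\[
|\mathbb{E}[\delta P_l]| \le |\mathbb{E}[P_l - P]| + |\mathbb{E}[P_{l-1} - P]| = O(l^{d-1}\,2^{-2l}).
\]
For the variance, Jensen/Minkowski yields $\mathrm{Var}[\delta P_l] \le \mathbb{E}[|\delta P_l|^2] \le 2\,\mathbb{E}[|P_l-P|^2] + 2\,\mathbb{E}[|P_{l-1}-P|^2] = O(l^{2d-2}\,2^{-4l})$, again by Proposition~\ref{conj_errorL}. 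For the per-sample cost, a single realisation of $\Delta P_{\bm l}^{N_l}$ requires $N_l = O(2^{2l})$ time steps on a grid of cardinality $\prod_j 2^{l_j} = 2^{l_1+\cdots+l_d}$; summing over $\mathcal{I}_l$ the contribution $\sum_{\bm l \in \mathcal{I}_l} 2^{l_1+\cdots+l_d}$ is dominated by the outermost simplex layer $\{|\bm l| = l+1\}$ of size $O(l^{d-1})$, giving $O(l^{d-1}\,2^{l})$. Multiplying by $N_l$ gives $\mathrm{Cost}[\delta P_l] = O(l^{d-1}\,2^{3l})$.

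Next I would apply the MLMC theorem with effective rates $\alpha = 2$, $\beta = 4$, $\gamma = 3$ (absorbing the $l^{d-1}$ polylog factors into constants over the relevant range). Since $\beta > \gamma$, we are in the regime where dominant cost sits on the coarse levels and optimal complexity $O(\varepsilon^{-2})$ is attainable. Concretely, to ensure the bias is at most $\alpha_0\varepsilon$, it suffices to take
\[
l^{\ast} = \Big[\tfrac{1}{2}\bigl(-\log_2 \varepsilon + (d-1)\log_2 |\log \varepsilon|\bigr)\Big],
\]
so that $C_1 (l^\ast)^{d-1} 2^{-2l^\ast} \le \alpha_0\varepsilon$. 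Then the standard Lagrange-multiplier optimisation $M_l \propto \sqrt{V_l/C_l}$ subject to $\sum_l V_l/M_l \le (1-\alpha_0^2)\varepsilon^2$ produces a total cost proportional to
\[
\varepsilon^{-2}\Bigl(\textstyle\sum_{l=0}^{l^\ast} \sqrt{V_l C_l}\Bigr)^2.
\]

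To close the argument I would bound $\sqrt{V_l C_l} \le C\, l^{(3d-3)/2}\,2^{-l/2}$, which is geometrically summable in $l$; hence $\sum_{l=0}^{l^\ast} \sqrt{V_l C_l}$ is uniformly bounded in $l^\ast$ (and hence in $\varepsilon$), and the total computational work is $O(\varepsilon^{-2})$, as claimed. This matches the two-dimensional result of Section~\ref{sec_2dMLMC} and shows that the additional $l^{d-1}$-type logarithmic factors arising from the sparse combination in dimension $d$ are absorbed without penalising the rate, because the variance still decays strictly faster than the cost grows.

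The genuinely hard part of this program is not the MLMC optimisation itself, which is essentially mechanical once the bounds on $\mathbb{E}[\delta P_l]$, $\mathrm{Var}[\delta P_l]$, $\mathrm{Cost}[\delta P_l]$ are in hand, but the underlying Conjectures~\ref{conj_1} and~\ref{conj_2}. Extending the two-dimensional Fourier analysis of Section~\ref{sec_2dMIMCEstimator} to general $d$ requires: (i) identifying explicit mean-square stability conditions on the correlation matrix $\Sigma$ analogous to \eqref{eq_stablerhos}, which become combinatorially more intricate because the ADI factorisation in $d$ factors generates many cross-derivative Milstein terms; (ii) splitting the Fourier domain into $3^d-1$ low/middle/high hyper-rectangular regions and establishing the analogue of Lemmas~\ref{lem_2dSGridlowwave}--\ref{lem_2dSGridhighwave} to obtain the factorised $O(h_{x_1}^2\cdots h_{x_d}^2)$ mixed-difference bound in \eqref{eq:bounds_d}. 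Provided both conjectures hold, the proposition follows exactly along the lines sketched above.
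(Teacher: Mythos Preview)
Your proposal is correct and follows essentially the same approach as the paper: the paper's own proof is a two-line remark that the three inequalities follow from the mixed-difference bounds \eqref{eq:bounds_d} (Conjecture~\ref{conj_2}) and that the complexity is a small modification of \cite[Theorem~3.1]{giles2008multilevel}. Your version routes the mean and variance bounds through Proposition~\ref{conj_errorL} via the triangle inequality rather than summing \eqref{eq:bounds_d} directly, and you spell out the $\sqrt{V_lC_l}\le C\,l^{(3d-3)/2}2^{-l/2}$ summability argument explicitly; both are equivalent and your additional detail is sound.
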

\begin{proof}
The first inequalities follow directly from \eqref{eq:bounds_d}.
The complexity then is a small modification of \cite[Theorem 3.1]{giles2008multilevel}.
\end{proof}

If we use MLMC without sparse combination to estimate $\mathbb{E}[P_t]$, by letting $h_{x_j} = h_0\cdot 2^{-l}$, $k = k_0\cdot 2^{-2l}$, then for constants $C_1,C_2,C_3>0$, independent of $h$ and $k$,
\[
E_l \le C_1\cdot 2^{-2l},\quad V_l \le C_2\cdot2^{-4l},\quad W_l \le C_3\cdot2^{(d+2)l}.
\]

%From \cite{giles2008multilevel} {\color{red} Theorem 3.1}, 
Similarly, we have the following result.
\begin{proposition}
Given a RMSE $\varepsilon$, the total computational cost $W$ for $P$ in \eqref{eq_ndloss} using MLMC satisfies
\begin{equation}
\label{eq_ndcostmlmc}
W = O\big(\varepsilon^{-1-\frac{d}{2}}\big),\quad d\geq 3.
\end{equation}
\end{proposition}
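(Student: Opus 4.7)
The plan is to apply the standard MLMC complexity theorem of \cite{giles2008multilevel} (Theorem 3.1) with the convergence rates already listed in the excerpt: the weak error rate $\alpha = 2$, the variance rate $\beta = 4$, and the per-sample cost rate $\gamma = d+2$. For $d \geq 3$, one has $\gamma = d+2 > 4 = \beta$, so the problem falls into the regime where the cost of a single level grows strictly faster than the variance of the level difference decays; in that regime the Giles theorem delivers an overall complexity of $O(\varepsilon^{-2 - (\gamma-\beta)/\alpha})$, and substituting the rates gives the exponent $2 + (d+2-4)/2 = 1 + d/2$, as required.

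To implement this explicitly, I would first fix the finest level $l^*$ so that the squared bias is below $\varepsilon^2/2$. Since $|\mathbb{E}[P_{l^*} - P]| \le C_1\,2^{-2l^*}$, it suffices to take $2^{l^*} \asymp \varepsilon^{-1/2}$. Next, with $M_l$ independent samples on level $l$, the mean square error of the telescoping estimator is bounded by
\begin{equation*}
\mathrm{MSE} \;\le\; (\text{bias})^2 + \sum_{l=0}^{l^*}\frac{V_l}{M_l}.
\end{equation*}
Minimising the total cost $\sum_{l=0}^{l^*} M_l W_l$ subject to the variance budget $\sum_{l=0}^{l^*} V_l/M_l \le \varepsilon^2/2$ by Lagrange multipliers yields $M_l \propto \sqrt{V_l/W_l}$ and a total cost proportional to $\varepsilon^{-2}\bigl(\sum_{l=0}^{l^*} \sqrt{V_l W_l}\bigr)^{2}$.

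The decisive calculation is then $\sqrt{V_l W_l} \le C\,2^{(d-2)l/2}$. Since $d \geq 3$ forces $(d-2)/2 > 0$, this geometric sum is dominated by its last term, giving $\sum_{l=0}^{l^*} \sqrt{V_l W_l} = O(2^{(d-2)l^*/2}) = O(\varepsilon^{-(d-2)/4})$. Squaring and multiplying by $\varepsilon^{-2}$ produces $O(\varepsilon^{-2 - (d-2)/2}) = O(\varepsilon^{-1-d/2})$, which is the claim.

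There is no substantive obstacle here; the proposition is essentially a direct corollary of the Giles complexity theorem with the rates already supplied. The only delicate bookkeeping lies at the regime boundary: strictly $d \geq 3$ ensures $\gamma > \beta$, so the top level dominates $\sum_l \sqrt{V_l W_l}$ and no logarithmic factor appears. The borderline $d = 2$ (corresponding to $\gamma = \beta$) would instead pick up an extra $(\log \varepsilon)^2$, consistent with the $O(\varepsilon^{-2}(\log\varepsilon)^2)$ cost for regular-grid MLMC in two dimensions reported earlier in the paper.
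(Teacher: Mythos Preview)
Your proposal is correct and takes essentially the same approach as the paper: the paper simply states the result as following ``similarly'' from the Giles complexity theorem \cite[Theorem~3.1]{giles2008multilevel} with the listed rates $E_l\le C_1 2^{-2l}$, $V_l\le C_2 2^{-4l}$, $W_l\le C_3 2^{(d+2)l}$, and your explicit Lagrange-multiplier computation with $\alpha=2$, $\beta=4$, $\gamma=d+2>\beta$ is exactly the derivation underlying that theorem in the $\gamma>\beta$ regime.
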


Comparing \eqref{eq_ndcostmlmc} with \eqref{eq_ndcostsgrid}, we can see that when $d>5$, the sparse combination scheme with standard MC performs better than MLMC on regular grids.

\section{Conclusion}\label{sec_Conclusion}
We considered a two-dimensional parabolic SPDE and a functional of the solution. We analysed the accuracy and complexity of a sparse combination multilevel Monte Carlo estimator, and showed that, by using a semi-implicit Milstein finite difference discretisation \eqref{eq_2DimplicitMilstein},
%the order of the first and second moments of $\Delta P^N_{(l_1,\,l_2)}$ are $O(h_x^2h_y^2)$ and $O(h_x^4h_y^4)$, with $h_x=h_0\, 2^{-l_1},\ h_y=h_0\, 2^{-l_2}$, and $N$ is the fixed number of time steps. For a fixed RMSE $\varepsilon$, 
we achieved the order of complexity $O(\varepsilon^{-2})$ for a RMSE $\varepsilon$, whereas the cost using standard Monte Carlo is $O(\varepsilon^{-4})$, and that using MLMC is $O(\varepsilon^{-2}(\log \varepsilon)^2)$. When generalising to higher-dimensional problems, sparse combination with MLMC maintains the optimal complexity, whereas MLMC has an increasing total cost as the dimension increases.

Further research will apply this method to a Zakai type SPDE with non-constant coefficients. Another open question is a complete analysis of the numerical approximation of initial-boundary value problems for the considered SPDE.

\appendix

\section{Proofs of some auxiliary stability results}
\label{app:proofs}
\subsection{Proof of Lemma \ref{lem_2dSGridlowwave}}\label{app_lem3.1}
\begin{proof}
For $(\xi,\eta)\in\Omega_{\text{low}}$, we have $G_{(l_1,\,l_2)}(\xi,\eta)$ as in \eqref{eq_2dG2}. From \eqref{eq_XN}, we have
\begin{equation}\label{eq_X1-X2}
\begin{aligned}
X_N^{l_1,\,l_2}-X_{N}^{l_1,\,l_2-1} &= X(T)\bigg(\exp\big(\sum_{n=0}^{N-1}e_n^{l_1,l_2}\big) - \exp\big(\sum_{n=0}^{N-1}e_n^{l_1,l_2-1}\big)\bigg),\\
X_N^{l_1,\,l_2}-X_{N}^{l_1-1,\,l_2} &= X(T)\bigg(\exp\big(\sum_{n=0}^{N-1}e_n^{l_1,l_2}\big) - \exp\big(\sum_{n=0}^{N-1}e_n^{l_1-1,l_2}\big)\bigg),\\
X_N^{l_1,\,l_2} - X_{N}^{l_1,\,l_2-1} - X_N^{l_1-1,\,l_2} + X_{N}^{l_1-1,\,l_2-1} &=X(T)\bigg(\exp\big(\sum_{n=0}^{N-1}e_n^{l_1,l_2}\big) - \exp\big(\sum_{n=0}^{N-1}e_n^{l_1,l_2-1}\big) \\
&\quad - \exp\big(\sum_{n=0}^{N-1}e_n^{l_1-1,l_2}\big) + \exp\big(\sum_{n=0}^{N-1}e_n^{l_1-1,l_2-1}\big)\bigg),\\
\end{aligned}
\end{equation}
where
\[
X(T) = \exp\bigg(-\frac{1}{2}(1-\rho_x)\xi^2T -\frac{1}{2}(1-\rho_y)\eta^2T -\mathrm{i}\xi\sqrt{\rho_xk}\sum_{n=0}^{N-1}Z_{n,x} - \mathrm{i}\eta\sqrt{\rho_yk}\sum_{n=0}^{N-1}\widetilde{Z}_{n,y}\bigg).
\]

By a similar proof to Lemma 4.1 in \cite{reisinger2018stability}, we have
\begin{align*}
X_N^{l_1,\,l_2} &= X(T)\cdot R_0(T,h_x\xi,h_y\eta),\\
X_N^{l_1,\,l_2} - X_{N}^{l_1,\,l_2-1} &= O(h_y^2)\cdot X(T)\cdot R_1(T,h_x\xi,h_y\eta),\\
X_N^{l_1,\,l_2} - X_{N}^{l_1-1,\,l_2-1} &= O(h_x^2)\cdot X(T)\cdot R_2(T,h_x\xi,h_y\eta),\\
X_N^{l_1,\,l_2} - X_{N}^{l_1,\,l_2-1} - X_N^{l_1-1,\,l_2} + X_{N}^{l_1-1,\,l_2-1} &= O(h_x^2h_y^2)\cdot X(T)\cdot R_3(T,h_x\xi,h_y\eta).
\end{align*}
Here, $R_0,\ R_1$, $R_2$, $R_3$ are random variables satisfying
\begin{align*}
\mathbb{E}\Big[ \big|X(T)\cdot R_i(T,h_x\xi,h_y\eta)\big|^n \Big] &= f_i(\xi,\eta)\mathrm{e}^{-\frac{n}{2}(\xi^2+\eta^2+2\xi\eta\sqrt{\rho_x\rho_y}\rho_{xy})T},\qquad i=0,1,2,3,
\end{align*}
where $f_i(\cdot)$ are polynomial functions. So in the low wave region, we get
\begin{align*}
&\quad\mathbb{E}\left[\iint_{\Omega_{\text{low}}}\!\!\!\!G_{(l_1,\,l_2)}(\xi,\eta)\,\mathrm{d}\xi\mathrm{d}\eta\right] \\
&= O(h_x^2h_y^2)\,\mathbb{E}\bigg[\iint_{\Omega_{\text{low}}}\!\!\! 4h_xh_y\chi(2h_x,\xi)\chi(2h_y,\eta)\Big(X(T)\, R_3(T,h_x\xi,h_y\eta)\Big)\mathrm{d}\xi\,\mathrm{d}\eta\bigg]\\
&\quad + O(h_x^2)\,\mathbb{E}\bigg[\iint_{\Omega_{\text{low}}}\!\!\! \Big(2h_xh_y\chi(2h_x,\xi)\chi(h_y,\eta) - 4h_xh_y\chi(2h_x,\xi)\chi(2h_y,\eta)\Big)\Big(X(T)\, R_2(T,h_x\xi,h_y\eta)\Big)\,\mathrm{d}\xi\,\mathrm{d}\eta\bigg]\\
&\quad + O(h_y^2)\,\mathbb{E}\bigg[\iint_{\Omega_{\text{low}}}\!\!\! \Big(2h_xh_y\chi(h_x,\xi)\chi(2h_y,\eta) - 4h_xh_y\chi(2h_x,\xi)\chi(2h_y,\eta)\Big)\Big(X(T)\, R_1(T,h_x\xi,h_y\eta)\Big)\,\mathrm{d}\xi\,\mathrm{d}\eta\bigg]\\
&\qquad\qquad + \mathbb{E}\bigg[\iint_{\Omega_{\text{low}}}\!\!\! \Big(h_xh_y\chi(h_x,\xi)\chi(h_y,\eta) - 2h_xh_y\chi(h_x,\xi)\chi(2h_y,\eta) - 2h_xh_y\chi(2h_x,\xi)\chi(h_y,\eta)\\
&\qquad\qquad\qquad\qquad\qquad + 4h_xh_y\chi(2h_x,\xi)\chi(2h_y,\eta)\Big)\Big(X(T)\, R_0(T,h_x\xi,h_y\eta)\Big)\,\mathrm{d}\xi\,\mathrm{d}\eta\bigg].
\end{align*}
Here,
\[
\iint_{\Omega_{\text{low}}} 4h_xh_y\chi(2h_x,\xi)\chi(2h_y,\eta)\Big(X(T)\cdot R_3(T,h_x\xi,h_y\eta)\Big)\,\mathrm{d}\xi\,\mathrm{d}\eta
\]
is the numerical approximation to 
\[
\int_0^\infty\int_0^\infty\bigg(\iint_{\Omega_\text{low}}\mathrm{e}^{\mathrm{i}\xi(x-x_0)+\mathrm{i}\eta(y-y_0)}\Big(X(T)\cdot R_3(T,h_x\xi,h_y\eta)\Big)\,\mathrm{d}\xi\,\mathrm{d}\eta\bigg) \,\mathrm{d}x\,\mathrm{d}y
\]
by the trapezoidal rule with error $O(h_x^2)+O(h_y^2)$. Therefore,
\[
\mathbb{E}\bigg[\iint_{\Omega_{\text{low}}} 4h_xh_y\chi(2h_x,\xi)\chi(2h_y,\eta)\Big(X(T)\cdot R_3(T,h_x\xi,h_y\eta)\Big)\,\mathrm{d}\xi\,\mathrm{d}\eta\bigg] = O(1).
\]
By the same reason,
\begin{align*}
&\mathbb{E}\bigg[\iint_{\Omega_{\text{low}}} \Big(2h_xh_y\chi(2h_x,\xi)\chi(h_y,\eta) - 4h_xh_y\chi(2h_x,\xi)\chi(2h_y,\eta)\Big)\Big(X(T)\cdot R_2(T,h_x\xi,h_y\eta)\Big)\,\mathrm{d}\xi\,\mathrm{d}\eta\bigg] = O(h_y^2),\\
&\mathbb{E}\bigg[\iint_{\Omega_{\text{low}}} \Big(2h_xh_y\chi(h_x,\xi)\chi(2h_y,\eta) - 4h_xh_y\chi(2h_x,\xi)\chi(2h_y,\eta)\Big)\Big(X(T)\cdot R_1(T,h_x\xi,h_y\eta)\Big)\,\mathrm{d}\xi\,\mathrm{d}\eta\bigg] = O(h_x^2),\\
&\mathbb{E}\bigg[\iint_{\Omega_{\text{low}}} \Big(h_xh_y\chi(h_x,\xi)\chi(h_y,\eta) - 2h_xh_y\chi(h_x,\xi)\chi(2h_y,\eta) - 2h_xh_y\chi(2h_x,\xi)\chi(h_y,\eta)\\
&\qquad\qquad\qquad\qquad\qquad\qquad\qquad\quad + 4h_xh_y\chi(2h_x,\xi)\chi(2h_y,\eta) \Big)\Big(X(T)\cdot R_0(T,h_x\xi,h_y\eta)\Big)\,\mathrm{d}\xi\,\mathrm{d}\eta\bigg] = O(h_x^2h_y^2).
\end{align*}

Therefore, we get 
\[
\mathbb{E}\left[\iint_{\Omega_{\text{low}}}\!\!\!\!G_{(l_1,\,l_2)}(\xi,\eta)\,\mathrm{d}\xi\mathrm{d}\eta\right] = O\big(h_x^2h_y^2\big).
\]
Similarly,
\[
\mathbb{E}\left[\bigg|\iint_{\Omega_{\text{low}}}\!\!\!\!G_{(l_1,\,l_2)}(\xi,\eta)\,\mathrm{d}\xi\mathrm{d}\eta\bigg|^2\right] = O\big(h_x^4h_y^4\big).
\]

\end{proof}

\subsection{Proof of Lemma \ref{lem_2dSGridmidwave}}\label{app_lem3.2}

\begin{proof}
As 
\[
\Omega_{\text{mid}} = \big\{(\xi,\eta):\vert\xi\vert\leq h_x^{-p}\text{ and } h_y^{-p}<\vert\eta\vert\leq \pi/h_y\big\} \cup \big\{(\xi,\eta):h_x^{-p}<\vert\xi\vert\leq \pi/h_x\text{ and } \vert\eta\vert\leq h_y^{-p}\big\},
\]
it is enough to show for $\Omega_{\text{mid}}^1 = \{ 0<|\xi|<h_x^{-p},\ h_y^{-p}<|\eta|<\pi/h_y\}$,
\[
 \mathbb{E}\Bigg[\,\bigg|\iint_{\Omega_{\text{mid}}^1} \!\!\!\!\! h_xh_yX_N(\xi,\eta)\chi(h_x,\xi)\chi(h_y,\eta)\,\mathrm{d}\xi\,\mathrm{d}\eta\bigg|^2\,\Bigg] = o(h_y^r),
\]

We have
\begin{align*}
 \mathbb{E}\Bigg[\,\bigg|\iint_{\Omega_{\text{mid}}^1} \!\!\!\!\! h_xh_yX_N(\xi,\eta)\chi(h_x,\xi)&\chi(h_y,\eta)\,\mathrm{d}\xi\,\mathrm{d}\eta\bigg|^2\,\Bigg] \leq 2\,\mathbb{E}\Bigg[\,\bigg|\iint_{\Omega_{\text{mid}}^1} \!\!\!\!\! h_xh_yX(T,\xi,\eta)\chi(h_x,\xi)\chi(h_y,\eta)\,\mathrm{d}\xi\,\mathrm{d}\eta\bigg|^2\,\Bigg]\\
 & + 2\,\mathbb{E}\Bigg[\,\bigg|\iint_{\Omega_{\text{mid}}^1} \!\!\!\!\! h_xh_y\Big(X(T,\xi,\eta) - X_N(\xi,\eta)\Big)\chi(h_x,\xi)\chi(h_y,\eta)\,\mathrm{d}\xi\,\mathrm{d}\eta\bigg|^2\,\Bigg],
\end{align*}
where $X(T)$ is introduced in \eqref{eq_solXn}, and $X_N$ is given by \eqref{eq_Xn2}. Specifically,
\begin{align*}
X(T) &= \prod_{n=0}^{N-1}\exp\Big(-\frac{1}{2}(1-\rho_x)\xi^2k -\frac{1}{2}(1-\rho_y)\eta^2k -\mathrm{i}\xi\sqrt{\rho_xk}Z_{n,x} - \mathrm{i}\eta\sqrt{\rho_yk}\widetilde{Z}_{n,y}\Big),\\
X_N &= \prod_{n=0}^{N-1}\frac{1 -\mathrm{i}c_x\sqrt{\rho_xk}Z_{n,x} -\mathrm{i}c_y\sqrt{\rho_yk}\widetilde{Z}_{n,y} + b_x\rho_xk(Z_{n,x}^2-1) + b_y\rho_yk(\widetilde{Z}_{n,y}^2-1) + d\sqrt{\rho_x\rho_y}kZ_{n,x}\widetilde{Z}_{n,y}}{1-(a_x+a_y)k}.
\end{align*}

As $X(T)$ is given in closed form, a direct calculation gives 
\[
\mathbb{E}\Bigg[\,\bigg|\iint_{\Omega_{\text{mid}}^1} \!\!\!\!\! h_xh_yX(T,\xi,\eta)\chi(h_x,\xi)\chi(h_y,\eta)\,\mathrm{d}\xi\,\mathrm{d}\eta\bigg|^2\,\Bigg] = o(h_y^r),\quad \forall r>0.
\]
Hence, in the following we focus on
\[
\mathbb{E}\Bigg[\,\bigg|\iint_{\Omega_{\text{mid}}^1} \!\!\!\!\! h_xh_y\Big(X(T,\xi,\eta) - X_N(\xi,\eta)\Big)\chi(h_x,\xi)\chi(h_y,\eta)\,\mathrm{d}\xi\,\mathrm{d}\eta\bigg|^2\,\Bigg].
\]
%Since under the 2-norm, $h_xh_y\chi(h_x,\xi)\chi(h_y,\eta)$ is the integration over $(0,\infty)\times(0,\infty)$ which is finite in our problem, it is justifiable to consider
Since $h_xh_y\chi(h_x,\xi)\chi(h_y,\eta)$ has finite 2-norm, it is justifiable to consider
\[
\mathbb{E}\Bigg[\,\bigg|\iint_{\Omega_{\text{mid}}^1} \!\!\!\!\! X(T,\xi,\eta) - X_N(\xi,\eta)\,\mathrm{d}\xi\,\mathrm{d}\eta\bigg|^2\,\Bigg].
\]

First we denote
\[
X_1(T,\xi)\coloneqq \exp\bigg(-\frac{1}{2}(1-\rho_x)\xi^2T -\mathrm{i}\xi\sqrt{\rho_xk}\sum_{n=0}^{N-1}Z_{n,x}\bigg),\ 
X_2(T,\eta)\coloneqq \exp\bigg(-\frac{1}{2}(1-\rho_y)\eta^2T - \mathrm{i}\eta\sqrt{\rho_yk}\sum_{n=0}^{N-1}\widetilde{Z}_{n,y}\bigg),
\]
so that
\[
X(T) = X_1(T,\xi)X_2(T,\eta).
\]

Then we introduce $\widetilde{X}_n$ such that
\[
\widetilde{X}_N = \prod_{n=0}^{N-1}\widetilde{C}_n^x\prod_{n=0}^{N-1}\widetilde{C}_n^y,
\]
where
\begin{align*}
\widetilde{C}_n^x = \frac{1 -\mathrm{i}c_x\sqrt{\rho_xk}Z_{n,x} + b_x\rho_xk(Z_{n,x}^2-1)}{1-a_xk},\quad \widetilde{C}_n^y =\frac{1 -\mathrm{i}c_y\sqrt{\rho_yk}\widetilde{Z}_{n,y} + b_y\rho_yk(\widetilde{Z}_{n,y}^2-1)}{1-a_yk}.
\end{align*}
It has been proved in \cite{reisinger2018analysis} that
\[
X_1(T,\xi) - \prod_{n=0}^{N-1}\widetilde{C}_n^x = X_1(T,\xi)\cdot O(h_x^2),
\]
\[
\mathbb{E}\bigg[\int_{h_y^{-p}}^{\pi/h_y} \bigg|\prod_{n=0}^{N-1}\widetilde{C}_n^y\bigg|^2\,\mathrm{d}\eta \bigg] = o(h_y^r),\quad 
\mathbb{E}\bigg[\int_{h_y^{-p}}^{\pi/h_y} \Big|X_2(T,\eta)\Big|^2\,\mathrm{d}\eta \bigg] = o(h_y^r),\qquad \forall r>0.
\]
Thus there exists a constant $C$ such that
\[
\int_{-h_x^{-p}}^{h_x^{-p}} \widetilde{X}_N - X(T,\xi,\eta)\,\mathrm{d}\xi= C\cdot \big(1+O(h_x^2)\big)\prod_{n=0}^{N-1}\widetilde{C}_n^y - C\cdot X_2(T,\eta),
\]
and as a result,
\begin{align*}
\mathbb{E}\Bigg[\,\bigg|\iint_{\Omega_{\text{mid}}^1} \!\!\!\!\! X(T,\xi,\eta) - \widetilde{X}_N(\xi,\eta)\,\mathrm{d}\xi\mathrm{d}\eta\bigg|^2\,\Bigg] \leq \frac{\pi}{h_y}\int_{h_y^{-p}}^{\pi/h_y} \mathbb{E}\bigg[\,\bigg|\int_{-h_x^{-p}}^{h_x^{-p}}  \!\!\!\!\! X(T,\xi,\eta) - \widetilde{X}_N(\xi,\eta)\,\mathrm{d}\xi\bigg|^2\bigg]\mathrm{d}\eta = o(h_y^r).
\end{align*}

Since
\[
\mathbb{E}\Big[\Big| X_n-\widetilde{X}_n \Big|^2\Big]= O(h_x)\mathbb{E}\big[\big|\widetilde{X}_n\big|^2\big],
\]
it follows that
\begin{align*}
\mathbb{E}\bigg[\,\bigg|\iint_{\Omega_{\text{mid}}^1} \widetilde{X}_N(\xi,\eta) - X_N(\xi,\eta)\,\mathrm{d}\xi\mathrm{d}\eta\bigg|^2\,\bigg]
&\leq \pi h_x^{-p}h_y^{-1} \iint_{\Omega_{\text{mid}}^1} \mathbb{E}\bigg[\Big|\widetilde{X}_N(\xi,\eta) - X_N(\xi,\eta)\Big|^2\bigg]\,\mathrm{d}\xi\mathrm{d}\eta\\
&= \pi h_x^{-p}h_y^{-1} \iint_{\Omega_{\text{mid}}^1} O(h_x)\mathbb{E}\Big[\big|\widetilde{X}_n\big|^2\Big]\,\mathrm{d}\xi\mathrm{d}\eta =o(h_y^r).
\end{align*}

Therefore,
\begin{align*}
&\quad\mathbb{E}\bigg[\,\bigg|\iint_{\Omega_{\text{mid}}^1}\!\!\!\!\! X(T,\xi,\eta) - X_N(\xi,\eta)\,\mathrm{d}\xi\mathrm{d}\eta\bigg|^2\,\bigg] \\
&\leq 2\,\mathbb{E}\bigg[\,\bigg|\iint_{\Omega_{\text{mid}}^1}\!\!\!\!\! X(T,\xi,\eta) - \widetilde{X}_N(\xi,\eta)\,\mathrm{d}\xi\mathrm{d}\eta\bigg|^2\,\Bigg] 
+ 2\,\mathbb{E}\bigg[\,\bigg|\iint_{\Omega_{\text{mid}}^1}\!\!\!\!\! \widetilde{X}_N(T,\xi,\eta) - X_N(\xi,\eta)\,\mathrm{d}\xi\mathrm{d}\eta\bigg|^2\,\bigg] 
=o(h_y^r). 
\end{align*}

\end{proof} 

\subsection{Proof of Lemma \ref{lem_2dSGridhighwave}}\label{app_lem3.3}

\begin{proof}
For the same reason as in the proof of Lemma \ref{lem_2dSGridmidwave}, it is sufficient to prove
\[
\mathbb{E}\Bigg[\,\bigg|\iint_{\Omega_{\text{high}}} X(T,\xi,\eta) - X_N(\xi,\eta)\,\mathrm{d}\xi\mathrm{d}\eta\bigg|^2\,\Bigg] = o(h_x^rh_y^r).
\]

Note that
\[
\Omega_{\text{high}} = \big\{(\xi,\eta): h_x^{-p}<\vert\xi\vert\leq \frac{\pi}{2h_x}\text{ and } h_y^{-p}<\vert\eta\vert\leq \frac{\pi}{2h_y}\big\}.
\]

By \eqref{eq_Xn2}, we have
\begin{align*}
X_N 
= X_0\prod_{n=0}^{N-1}\frac{1 -\mathrm{i}c_x\sqrt{\rho_xk}Z_{n,x} -\mathrm{i}c_y\sqrt{\rho_yk}\widetilde{Z}_{n,y} + b_x\rho_xk(Z_{n,x}^2-1) + b_y\rho_yk(\widetilde{Z}_{n,y}^2-1) + d\sqrt{\rho_x\rho_y}kZ_{n,x}\widetilde{Z}_{n,y}}{1-(a_x+a_y)k}.
\end{align*}
We denote 
\[
u = \frac{\sin^2 \frac{h_x\xi}{2}}{(\frac{h_x\xi}{2})^2}= O(1),\qquad v = \frac{\sin^2 \frac{h_y\eta}{2}}{(\frac{h_y\eta}{2})^2}= O(1).
\]

In this case,
\begin{align*}
\mathbb{E}\big[X_N\big] &= X_0\prod_{n=0}^{N-1} \mathbb{E}\big[ C_n\big] = X_0\bigg(\frac{1+d\sqrt{\rho_x\rho_y}\rho_{xy}k}{1+\frac{1}{2}(\xi^2u+\eta^2v)k}\bigg)^N\\
&< X_0\exp\bigg(-\frac{1}{2}\big(\xi^2u+\eta^2v
+2\xi\eta\frac{\sin\xi h_x\sin\eta h_y}{\xi h_x\eta h_y}\sqrt{\rho_x\rho_y}\rho_{xy}\big)\,T\bigg),\\[4pt]
\mathbb{E}\big[\big|X_N\big|^2\big] &= \big|X_0\big|^2\prod_{n=0}^{N-1} \mathbb{E}\big[ \big|C_n\big|^2\big]\\ 
&\leq \big|X_0\big|^2\bigg(1-\frac{k\xi^2u\big(1-\rho_x\cos^2\frac{\xi h_x}{2}+\frac{1}{4}\xi^2uk(1-2\rho_x^2(1+2|\rho_{xy}|))\big)}{\big(1-(a_x+a_y)k\big)^2}\\
&\qquad\qquad-\frac{k\eta^2v\big(1-\rho_y\cos^2\frac{\eta h_y}{2}+\frac{1}{4}\eta^2vk(1-2\rho_y^2(1+2|\rho_{xy}|))\big)}{\big(1-(a_x+a_y)k\big)^2}\\
&\qquad\qquad -\frac{k^2\xi^2\eta^2uv\big(1-2\rho_x\rho_y(1 +|\rho_{xy}| + 3\rho_{xy}^2)\big)}{2\big(1-(a_x+a_y)k\big)^2}\bigg)^N\\
&\leq \big|X_0\big|^2\bigg(1-\frac{k\xi^2u\big(\beta+\frac{1}{4}\xi^2uk\beta\big)}{\big(1-(a_x+a_y)k\big)^2} 
- \frac{k\eta^2v\big(\beta+\frac{1}{4}\eta^2vk\beta\big)}{\big(1-(a_x+a_y)k\big)^2} 
- \frac{k^2\xi^2\eta^2uv\beta}{2\big(1-(a_x+a_y)k\big)^2}\bigg)^N\\
% &< \big|X_0\big|^2\exp\bigg(-\xi^2u\beta T \frac{1+\frac{1}{4}\xi^2u\beta k}{\big(1+\frac{1}{2}(\xi^2u+\eta^2v)k\big)^2}
% - \eta^2v\beta T\frac{1 +\frac{1}{4}\eta^2v\beta k}{\big(1+\frac{1}{2}(\xi^2u+\eta^2v)k\big)^2}
% - \frac{\xi^2\eta^2uvk\beta T}{2\big(1+\frac{1}{2}(\xi^2u+\eta^2v)k\big)^2}\bigg)\\
&< \big|X_0\big|^2\exp\bigg(-\xi^2u\beta T \frac{1+\frac{1}{4}\beta\xi^2uk}{\big(1+\frac{1}{2}(\xi^2u+\eta^2v)k\big)^2}
- \eta^2v\beta T\frac{1 +\frac{1}{4}\beta\eta^2vk}{\big(1+\frac{1}{2}(\xi^2u+\eta^2v)k\big)^2}\bigg),
\end{align*}
where
\[
\beta = \min\big\{1-\rho_x, 1-\rho_y, 1-2\rho_x^2(1+2|\rho_{xy}|), 1-2\rho_y^2(1+2|\rho_{xy}|), 1-2\rho_x\rho_y(1+2|\rho_{xy}|+3\rho_{xy}^2)\big\} \in (0,1).
\]
As we have $k<\lambda \min\{h_x^2,h_y^2\}$,
\[
0< \xi^2k < \lambda \pi^2/4,\quad 0< \eta^2k < \lambda \pi^2/4.
\]
Also,
\[
\min\Big\{\frac{u(1+\frac{1}{4}\beta\xi^2uk)}{\big(1+\frac{1}{2}(\xi^2u+\eta^2v)k\big)^2}, \frac{v(1+\frac{1}{4}\beta\eta^2vk)}{\big(1+\frac{1}{2}(\xi^2u+\eta^2v)k\big)^2}\Big\} > \frac{8}{\pi^2\big(1+\frac{1}{4}\lambda \pi^2\big)^2}.
\]

We define a constant
\[
\kappa \coloneqq  \frac{8\beta T}{\pi^2\big(1+\frac{1}{4}\lambda \pi^2\big)^2},
\]
then
\[
\mathbb{E}\big[\big|X_N\big|^2\big] < |X_0|^2\exp\big(-\kappa \xi^2 - \kappa \eta^2 \big).
\]

Therefore we have
\begin{align*}
&\quad\mathbb{E}\Bigg[\,\bigg|\iint_{\Omega_{\text{high}}} X_N(\xi,\eta)\,\mathrm{d}\xi\mathrm{d}\eta\bigg|^2\,\Bigg]< \frac{\pi^2}{h_xh_y} \iint_{\Omega_{\text{high}}} \mathbb{E}\Big[\big|X_N(\xi,\eta)\big|^2\Big]\,\mathrm{d}\xi\mathrm{d}\eta < \frac{\pi^2|X_0|^2}{h_xh_y}  \iint_{\Omega_{\text{high}}}\mathrm{e}^{-\kappa\xi^2 - \kappa\eta^2}\,\mathrm{d}\xi\mathrm{d}\eta\\
& = \frac{\pi^2|X_0|^2}{h_xh_y} \int_{h_y^{-p}}^{\pi/h_y}\int_{h_x^{-p}}^{\pi/h_x}\mathrm{e}^{-\kappa\xi^2 - \kappa\eta^2}\,\mathrm{d}\xi\mathrm{d}\eta 
\leq C\cdot (h_xh_y)^{-1+p}\exp\big(-\kappa h_x^{-p} -\kappa h_y^{-p}\big) = o(h_x^rh_y^r),\quad \forall r>0.
\end{align*}

\end{proof}

%%% first line is for Springer's example, bibtex is preferred
%\input{referenc}
% \bibliography{literature}
% \bibliographystyle{abbrv}

\bibliography{references}
\bibliographystyle{plain}

\end{document}